\newtheorem{thm}{Theorem}[section]
\newtheorem{lem}[thm]{Lemma}
\newtheorem{remark}[thm]{\textit{Remark}}
\newtheorem{assu}[thm]{Assumption}
\newcommand{\beq}{\begin{equation}}
\newcommand{\eeq}{\end{equation}}
\newcommand{\ep}{{\epsilon}}
\newcommand{\ga}{{\gamma}}
\newcommand{\del}{\delta}
\newcommand{\Om}{\Omega}
\def\cI{\mathcal{I}}
\def\cG{\mathcal{G}}
\def\cH{\mathcal{H}}
\def\cL{\mathcal{L}}
\def\cI{\mathcal{I}}
\def\cS{\mathcal{S}}
\def\cD{\mathcal{D}}
\def\nd{{\textnormal{d}}}
\newcommand{\tn}[1]{\textnormal{#1}}
\def\bs{\bm s}
\def\bx{\bm x}
\def\by{\bm y}
\def\R{\mathbb{R}}
\def\Z{\mathbb{Z}}
\begin{document}
\begin{frontmatter}
\title{A Petrov-Galerkin method for nonlocal convection-dominated diffusion problems}

\author[1,4]{Yu Leng}
\ead{leng10@purdue.edu}
\author[2]{Xiaochuan Tian}
\ead{xctian@ucsd.edu}
\author[3]{Leszek Demkowicz}
\ead{leszek@oden.utexas.edu}
\author[1]{Hector Gomez}
\ead{hectorgomez@purdue.edu}
\author[4]{John T. Foster}
\ead{jfoster@austin.utexas.edu}

\address[1]{School of Mechanical Engineering, Purdue University, West Lafayette, IN 47907, United States}
\address[2]{Department of Mathematics, University of California, San Diego, CA 92093, United States}
\address[3]{Oden Institute for Computational Engineering and Sciences, The University of Texas at Austin, Austin, TX 78712, United States}
\address[4]{Department of Petroleum and Geosystems Engineering, The University of Texas at Austin, Austin, TX 78712, United States}
%\address[1]{}
%\address[2]{}
%\address[3]{}
%\address[3]{}

\begin{abstract} 
We present a Petrov-Gelerkin (PG) method for a class of nonlocal convection-dominated diffusion problems. There are two main ingredients in our approach. 
First, we define the norm on the test space as induced by the trial space norm, i.e., the optimal test norm, so that the inf-sup condition can be satisfied uniformly independent of the problem. We show the well-posedness of a class of nonlocal convection-dominated diffusion problems under the optimal test norm with general assumptions on the nonlocal diffusion and convection kernels.
Second, following the framework of Cohen et al.~(2012), we embed the original nonlocal convection-dominated diffusion problem into a larger mixed problem so as to choose an enriched test space as a stabilization of the numerical algorithm. 
%A natural a-posterior error estimator comes with solving the mixed problem which enables the adaptivity in the trial space.
In the numerical experiments, we use an approximate optimal test norm which can be efficiently implemented in 1d, and study its performance against the energy norm on the test space. We conduct convergence studies for the nonlocal problem using uniform $h$- and $p$-refinements, and adaptive $h$-refinements on both smooth manufactured solutions and solutions with sharp gradient in a transition layer. In addition, we confirm that the PG method is asymptotically compatible. 
\end{abstract}

\begin{keyword}
nonlocal models, convection-dominated diffusion, Petrov-Galerkin, optimal test norm, well-posedness, adaptive refinement, a-posterior error estimator, asymptotically compatible schemes 
\end{keyword} 
\end{frontmatter}

%The result has various applications. In particular, it provides a theoretical foundation 
%to the coupling of nonlocal and local models.}

%%%%%%%%%%%%%%%%%%%%%%%%%%%%%%%%%%%%%%%%%%%%%%%%%%%%%%%
%%%%%%%%%%%%%%%%%%%% SECTION ONE %%%%%%%%%%%%%%%%%%%%%%%%%%%
%%%%%%%%%%%%%%%%%%%%%%%%%%%%%%%%%%%%%%%%%%%%%%%%%%%%%%%

\section{Introduction}
In this work, we are motivated to study the numerical solution of the nonlocal convection-dominated diffusion model. Nonlocal models, usually formulated to involve integral operators, have been an area of growing research in recent decades as a result of their wide applications to many real-world phenomena \cite{bazant2002nonlocal,buades2010image,bucur2016nonlocal,magin2006fractional,mainardi2010fractional,Silling2000}.
In particular, the peridynamics model \cite{Silling2000}, a nonlocal theory of continuum mechanics, has been used in many engineering applications such as hydraulic fracturing \cite{Ouchi2017}, erosion \cite{chen2015peridynamic}, fatigue \cite{zhang2016validation}, fragmentation \cite{lai2015peridynamics} and many others. 
We consider convection-diffusion equations modeled through nonlocal diffusion and nonlocal gradient operators, following the nonlocal vector calculus framework \cite{Du2013a} and its application to the volume-constrained nonlocal diffusion models \cite{Du2012a}. Such models share the same spirit with the peridynamics models in the sense that the nonlocal interactions are restricted to a finite range. In comparison, nonlocal models characterized by fractional Laplacians have infinite nonlocal interactions \cite{rosoton2016nonlocal}. 
Mathematical analysis and numerical methods have been developed for nonlocal diffusion models \cite{Andreu2010,Du2012a,Du2013b,leng2021asymptotically,Tian2013a}, linear and nonlinear nonlocal advection \cite{du2017nonlocal,du2012new,lee2019asymptotically}, nonlocal convection-diffusion models \cite{Delia2017,du2014nonlocal,tian2015nonlocal,tian2017conservative}, nonlocal Stokes equations \cite{DuTi20}, and peridynamics models \cite{bessa2014meshfree,leng2020asymptotically,MeDu14b,Pasetto2018,seleson2016convergence,silling2005meshfree,trask2019b,trask2019asymptotically}. 
We refer the readers to the monograph \cite{Du2019} and the survey work \cite{DDGG20} for a more detailed discussion on nonlocal models.

The time-dependent linear nonlocal convection-diffusion models with volumetric constraints are studied in \cite{du2014nonlocal}, in which the nonlocal models are connected with Markov jump processes. Later, the steady nonlocal convection-diffusion models are studied and analyzed in \cite{Delia2017,tian2015nonlocal,tian2017conservative}. 
%Under suitable assumptions on the nonlocal kernel, it is shown that the nonlocal convection-diffusion model enjoys important physical properties such as the maximum principle and mass conservation. 
An important feature for these models is that the nonlocal convection-diffusion model converges to the classical convection-diffusion model as the nonlocal interaction vanishes.
The nonlocal convection terms used in these works are slightly different, with which physical properties such as maximum principle and mass conservation may or may not be preserved.  
The well-posedness of the weak form of the nonlocal convection-diffusion problem in all these works, however, is essentially based on the assumption of coercivity of the bilinear form. This assumption limits the model to the diffusion-dominated regime.

It is well-known for the classical convection-diffusion problem that, as the convection becomes dominant over the diffusion, the solution of the standard Galerkin method or central finite difference method deteriorates and unphysical oscillations emerge in the solution. Stabilizing numerical techniques, such as the streamline upwind/Petrov-Galerkin method \cite{brooks1982streamline}, least-squares technique \cite{hughes1989new},  exponential fitting method \cite{brezzi1989two}, edge stabilization \cite{burman2004edge}, and various Petrov-Galerkin (PG) methods \cite{broersen2014robust,cohen2012adaptivity,demkowicz2014overview} are developed to solve the classical convection-dominated diffusion problem.  
For nonlocal convection-dominated diffusion models, it was also shown in \cite{tian2015nonlocal} that the standard Galerkin method leads to significant instabilities, upwind nonlocal models with specially designed nonlocal convective kernels are therefore adopted in \cite{tian2015nonlocal,tian2017conservative} and are numerically verified to be stable.

Discontinuous Petrov-Galerkin (DPG) method has been developed in a series of work \cite{demkowicz2010class,demkowicz2011class,demkowicz2017discontinuous,demkowicz2012class,zitelli2011class}. The highlights of the DPG methodology include the use of ultraweak variational formulation and the computation of the discontinuous optimal test functions on the fly. Superior stability properties are demonstrated  and an overview of the DPG method can be found in \cite{demkowicz2014overview}. The use of discontinuous functions allows one to solve for optimal test functions on a local element level. This, however, cannot be immediately translated to the nonlocal problem.  We therefore adopt the continuous PG method under the framework developed in \cite{cohen2012adaptivity}. The key is to embed the original variational problem into a larger mixed problem which stabilizes the numerical method  using enriched test spaces. 

     %In addition, integration error is always a challenging topic for nonlocal problems \cite{Pasetto2018,seleson2016convergence}, and we use pre-computed quadrature rules following \cite{milovanovic2012special} to reduce the integration error.  
There are two major contributions of this work. The first contribution is that we show the well-posedness result of the nonlocal convection-diffusion model where the convection could be dominant. 
Such result is achieved by choosing the norm on the test space in an optimal way such that the inf-sup condition or the Ladyzhenskaya–Babu\v{s}ka–Brezzi condition for well-posedness of the variational problem is satisfied. %We further use a Petrov-Galerkin method to solve the nonlocal convection-dominated diffusion problem. %after well-posedness for such problem is established.   
However, the optimal test space norm is not practical in computation, the second contribution is that we present an approximation of the optimal test norm in one dimension and conduct numerical experiments to demonstrate the effectiveness of the PG method for the convection-dominated diffusion model. %are stable for spherical nonlocal convection kernels.

This paper is organized as follows. We briefly introduce the PG approach used in this work in \cref{sec:dpg}. The optimal test space norms and the well-posedness theorem for the nonlocal convection-diffusion model are presented in \cref{sec:confusionmodel}.
%The nonlocal convection-diffusion model equations and weak formulations, along with the corresponding optimal test space norms which guarantee the stability of the continuous models and the well-posedness theorem for an arbitrary diffusion coefficient, are presented in \cref{sec:confusionmodel}.
\Cref{sec:numericalexamples} shows the convergence results and the superiority of the PG method using numerical examples.
%Since the optimal test space norm is not practical in computation, we present an approximation of the optimal test norm in one dimension and demonstrate the effectiveness of the PG method for the convection-dominated diffusion model in \cref{sec:numericalexamples}. %Convergence studies using uniform $h$- and $p$-, and adaptive $h$-refinements are conducted for a manufactured smooth solution, and optimal convergence rates are observed.  We also see that the PG method is asymptotically compatible with respect to the change of the nonlocal interaction parameter. 
%To show the advantage of the approximate optimal test norm, we use a manufactured solution with a sharp gradient transition to test the robustness of the PG method with the approximate optimal test norm in compared with the usual energy norm on the test space.  
Finally, we conclude this paper in \cref{sec:conclusion}. 

\section{The Petrov-Galerkin method} \label{sec:dpg}
 In this section, we briefly introduce the ingredients of our PG method following the expositions in \cite{cohen2012adaptivity,demkowicz2020double,demkowicz2011class}. 
%we briefly introduce the DPG method developed in \cite{demkowicz2010class,demkowicz2011class,demkowicz2012class,zitelli2011class}.
 Let us consider the abstract variational problem,
\begin{equation} \label{eqn:abstract}
\begin{cases}
\textnormal{Find } u \in U , \, \textnormal{ such that:} \\
b(u, v) = l(v), \quad \forall v \in V ,
\end{cases} 
\end{equation}
where $U$ (the ``trial" space)  and $V$ (the ``test" space) are Hilbert spaces with norm, $\| \cdot \|_U$ and $\| \cdot \|_{V}$ respectively, $l(\cdot)$ is a given real-valued continuous linear functional on $V$, and $b(\cdot, \cdot)$ is a continuous bilinear form defined on $U \times V$ that satisfies the inf-sup condition
\begin{equation}\label{eqn:infsup}
\inf_{u\in U} \sup_{v\in V} \frac{|b(u, v)|}{\| u\|_{U}  \| v\|_{V}} \geq \alpha >0  \iff \sup_{v\in V} \frac{|b(u, v)|}{ \| v\|_{V}}  \geq \alpha  \| u\|_U. 
\end{equation}
We also assume that the bilinear form $b(\cdot , \cdot)$ is definite, i.e., 
\begin{equation}
\label{eqn:definite}
\text{if } b(u, v) = 0, \quad \forall u \in U, \text{ then } v=0. 
\end{equation}
\Cref{eqn:abstract} is well-posed by the Banach-Ne\v{c}as-Babu\v{s}ka theorem (see e.g. \cite{ern2013theory,oden2018applied}). 
It is worth mentioning that the inf-sup constant $\alpha$ is $1$ if $U$ and $V$ form a {\itshape duality pairing}. Namely if  we define $\| \cdot\|_V$ as the norm induced by $\| \cdot\|_U$, we can recover $\| \cdot\|_U$ by the norm induced by $\| \cdot \|_V$, i.e.,
 \[
 \| v\|_{V} := \sup_{u \in U}\frac{|b(u, v)|}{\| u\|_U}  \quad \implies \quad  \| u\|_{U} := \sup_{v \in V}\frac{|b(u, v)|}{\| v\|_V} . 
 \] 
 For this reason and following the nomenclature in \cite{demkowicz2020double}, we will call the norm on $V$ induced by $\| \cdot \|_U$ the {\itshape optimal test norm}, denoted by   
\begin{equation} \label{eqn:AbstractOpt}
\| v\|_{\textnormal{opt}, V} :=\sup_{u \in U}\frac{|b(u, v)|}{\| u\|_U}.
\end{equation}
In this paper, we will explore the effectiveness of our Petrov-Galerkin method with $\| \cdot \|_V$ defined in different ways. 

In computation,  PG methods in general take finite-dimensional trial and test spaces $U_h \subset U$ and $V_h \subset V$ with $\text{dim} U_h = \text{dim} V_h$ and solve the approximate problem 
\begin{equation} \label{eqn:discrete}
\begin{cases}
\textnormal{Find } u_h \in U_h , \, \textnormal{ such that:} \\
b(u_h, v_h) = l(v_h), \quad \forall v_h \in V_h .
\end{cases} 
\end{equation}
Equation \eqref{eqn:discrete} is well-posed only if the discrete inf-sup condition
\begin{equation}\label{eqn:discreteinfsup}
\inf_{u_h\in U_h} \sup_{v_h\in V_h} \frac{|b(u_h, v_h)|}{\| u\|_{U_h}  \| v\|_{V_h}} \geq \beta >0 \iff  \sup_{v_h\in V_h} \frac{|b(u_h, v_h)|}{ \| v_h\|_{V_h}} \geq \beta  \| u_h\|_{U_h} 
\end{equation}
is satisfied for some $\beta>0$. 
In practice, it is notoriously hard to find stable and robust discretizations in the form of \eqref{eqn:discrete} for accurate simulations of the convection-dominated problems. 
An {\itshape ideal PG method} by choosing $V_h$ with optimal test functions that automatically guarantees the numerical stability, is presented in \cite{demkowicz2011class}.
This is done by letting $V_h = T(U_h)$ where   $T: U \to V$  is the  trial-to-test operator defined by 
\[
(Tu, v)_{V} = b(u, v), \quad \forall v \in V, 
\] 
where $(\cdot, \cdot)_V$ denotes the inner product on the test space $V$. 
It is also shown in \cite{demkowicz2011class} that the ideal PG method generates numerical solutions that are orthogonal projections of the exact solution in the norm induced by the test norm (the same as the trial norm if $U$ and $V$ form a duality pairing). 
However, the implementation of the ideal PG method in general is difficult.
Therefore, we follow a different strategy suggested by \cite{cohen2012adaptivity} and solve the discrete mixed problem 
\begin{equation} \label{eqn:discretemixed}
\begin{cases}
\textnormal{Find } \psi_h \in V_h , u_h \in U_h, & \textnormal{ such that:} \\
(\psi_h, v_h)_V +  b(u_h,  v_h) &= l(v_h), \quad \forall v_h \in V_h, \\
b(w_h, \psi_h)    &=0,   \qquad  \forall w_h \in U_h . 
\end{cases} 
\end{equation}
Equation \eqref{eqn:discretemixed} allows us to choose test spaces of much larger dimension, i.e., $\text{dim} V_h \gg \text{dim} U_h $, such that the discrete inf-sup condition \eqref{eqn:discretemixed} is much easier to  satisfy.   This is critical for the numerical stability of the convection-dominated problems, which are the major interest of this work.
In addition, $\psi_h$ %obtained by solving \cref{eqn:discretemixed}% 
serves as a natural a-posteriori error indicator for the adaptive $h$-refinements in the trial space.
We remark that if the test space is not approximated, i.e., $V_h = V$ in \eqref{eqn:discretemixed},  \eqref{eqn:discretemixed} is equivalent to the ideal PG method presented by \cite{demkowicz2011class}. 
%In this work, we will solve \eqref{eqn:discretemixed} with enriched test spaces, namely, $\text{dim} V_h > \text{dim} U_h$.  % with polynomial order $\tilde p = p+\delta p$ where $p$ is the order of the trial space and $\delta p > 0$ is the order of increment in the test space. }

\section{Application to the nonlocal convection-dominated diffusion} \label{sec:confusionmodel}

In this section, we introduce the nonlocal convection-diffusion model. We derive the optimal test space norm, and show the well-posedness of the model problem utilizing the optimal test space norm.
In the existing literature, the well-posedness of the weak form nonlocal convection-diffusion equation is always based on the assumption of coercivity \cite{Delia2017, tian2017conservative}. This means that the results in the existing literature only apply to the diffusion-dominated regime. We establish in this section the well-posedness of the convection-dominated problem, and this is done by the PG approach presented in the previous section.  
 
\subsection{Model equations}
Following \cite{DDGG20,Du2012a}, we define the {\it nonlocal diffusion operator} with the nonlocal length scale parameter $\del>0$ in dimension $\nd \in \Z^+$ by 
\beq \label{eqn:NLDiffusion}
\cL_{\del} u(\bx) = 2\int_{B_\del(\bx)}\gamma_\del^{\text{diff}}(|\by -\bx|)(u(\by)-u(\bx)) d\bx \,,
\eeq
where $B_\del(\bx) \subset \R^\nd $ denotes the Euclidean ball of radius $\delta$ centered at a point $\bx \in \R^\nd$, and 
$\ga_\del^{\text{diff}}(|\bs|)$ is the nonlocal diffusion kernel supported on $\overline{B_\del(\bm 0)}$ with the following scaling  
\beq
\gamma^{\text{diff}}_{\delta}(|\bm{s}|) =  \frac{ 1}{ \delta^{\nd+2} } \gamma^{\text{diff}} \left(\frac{|\bm{s}|}{\delta} \right). 
\eeq
We assume in this work that $\gamma^{\text{diff}}(r)$ is a non-negative and non-increasing function defined on $[0,1]$, and $\gamma^{\text{diff}}(|\bm{s}|)$ has a bounded second order moment, i.e., 
\beq \label{eqn:BoundedSecond}
\int_{B_1(\bm 0)}  \ga^{\text{diff}}(|\bs|) |\bs|^2 d\bs = \nd \,.
\eeq
We introduce the {\it nonlocal gradient operator} $\cG_{\delta}$ following by
 \beq \label{eqn:NLGradient}
\cG_{\del} u(\bx) = \int_{\cH_\del(\bx)} \frac{\by -\bx}{|\by - \bx|} \gamma_\del^{\text{conv}}(|\by -\bx|)(u(\by)-u(\bx)) d\by \,.
\eeq  
where $\cH_\del(\bx)\subset B_\del(\bx)$ is an influence region surrounding $\bx$, and $\gamma_\del^{\text{conv}}(|\bm{s}|)$ is the nonlocal convection kernel supported on  $\overline{B_\del(\bm 0)}$  with the  scaling
\beq
\gamma^{\text{conv}}_{\delta}(|\bm{s}|) =  \frac{1}{ \delta^{\nd+1} } \gamma^{\text{conv}} \left(\frac{|\bm{s}|}{\delta} \right). 
\eeq
We assume that $\cH_\del(\bx)$ is a sector of the ball $B_\del(\bx)$ and 
\beq
\label{eqn:volume_H}
\frac{|\cH_\del(\bx)|}{|B_\del(\bx)|} = \eta, \quad \forall \bx \in\R^\nd,
\eeq
where $\eta>0$ is independent of $\bx$, and $|\cH_\del(\bx)|$ and $|B_\del(\bx)|$ denote the volume of the sets $\cH_\del(\bx)$ and $B_\del(\bx)$ respectively. 
The most common choices of $\cH_\del(\bx)$ are either the full ball $B_\del(\bx)$ ($\eta =1$)  \cite{Delia2017,DuTao2016,DuTi20,Mengesha2016} or a hemispherical subregion of $B_\del(\bx)$ ($\eta =1/2$) \cite{lee2020nonlocal,tian2015nonlocal,tian2017conservative}. More detailed discussions on the choices of $\cH_\del(\bx)$ will be given in \Cref{sec:optnorm}.
Now with \cref{eqn:volume_H}, we assume that $\gamma^{\text{conv}}(r)$ is a non-negative function on $[0,1]$, and that $\gamma^{\text{conv}}(|\bm{s}|)$  satisfies 
\beq \label{eqn:BoundedFirst}
\int_{B_1(\bm 0)}  \ga^{\text{conv}}(|\bs|)|\bs| d\bs = \nd /\eta .
\eeq
As a result, \cref{eqn:NLGradient} is a nonlocal analogue of the classical gradient operator $\nabla u$.  
In this work, for reasons that will be explained later, we also assume the following relation between the diffusion kernel and the convection kernel: 
\beq \label{eqn:kernel}
\ga^{\tn{conv}}(|\bs|) =   \frac{|\bs| }{\eta} \ga^{\tn{diff}}(|\bs|).
\eeq
It is worth noting that  \cref{eqn:kernel} is compatible with \cref{eqn:BoundedSecond,eqn:BoundedFirst}. Namely, if \cref{eqn:kernel} is satisfied, we also have 
\[
\ga_\del^{\tn{conv}}(|\bs|) = \frac{1}{ \delta^{\nd+1} } \gamma^{\text{conv}} \left(\frac{|\bm{s}|}{\delta} \right) = \frac{1}{ \delta^{\nd+2} } \frac{|\bs|}{\eta}  \ga^{\tn{diff}}\left(\frac{|\bs|}{\del}\right) = \frac{|\bs|}{\eta}  \ga_\del^{\tn{diff}}(|\bs|) .
\]

For a vector field $\bm{b} \in L^{\infty}(\mathbb{R}^\nd;\mathbb{R}^\nd)$,  the nonlocal convection-diffusion model problem, defined on an open bounded domain $\Omega \subset \R^{\nd}$, is formulated as 
 \begin{equation}\label{eqn:NLConfusion}
\left\{ 
\begin{aligned}
-\epsilon \cL_{\delta} u(\bx) + \bm{b} \cdot \cG_{\delta}u(\bx) &= f_{\del}(\bx),  \quad &\bx \in \Omega, \\
 u(\bx)& = 0,  \qquad   &\bx \in \Omega_{\cI_\del}\, ,
\end{aligned}
\right.
\end{equation}
where $\epsilon$ is a positive parameter, and $ \Omega_{\cI_\del}$ is the interaction domain given by
\beq
 \Omega_{\cI_\del} := \{ \bx\in \R^{\nd} \backslash \Om: \text{dist}(\bx, \partial\Om)<\del\}\, .
\eeq
We denote $\Om_\del : = \Om\cup \Omega_{\cI_\del}$ for the rest of the work. 
 We are interested in the convection-dominated regime, namely, $0<\epsilon \ll \| \bm{b} \|_{L^{\infty}(\Omega)}$.  Then, \cref{eqn:NLConfusion} is called a nonlocal convection-dominated diffusion model, and is a nonlocal analogue of the classical convection-diffusion problem 
\beq \label{eqn:LConfusion}
\left\{ 
\begin{aligned}
-\epsilon \Delta u(\bx) + \bm{b}  \cdot  \nabla u(\bx)&= f(\bx), \quad &\bx \in \Omega, \\
 u(\bx)& = 0,  \qquad  &\bx \in \partial\Om\,.
\end{aligned}
\right.
\eeq

\begin{remark}
The nonlocal convection term can also be formulated in a different way analogous to the differential operator $\nabla\cdot (\bm{b} u)$ \cite{Delia2017,tian2015nonlocal}. 
For this, we need to define the {\it nonlocal divergence operator} $\cD_\del$ acting on the function $\bm{b} u$ by
\beq \label{eqn:NLdivergence}
\cD_\del (\bm{b} u) (\bx) = \int_{\R^d}  (\bm{b}(\bx)u(\bx) 1_{\cH_\del(\bx)}(\by) +  \bm{b}(\by)u(\by)1_{\cH_\del(\by)}(\bx) )\cdot \frac{\by - \bx}{|\by -\bx |}  \gamma^{\textnormal{conv}}_\del  (|\by -\bx |) d\by . 
\eeq
The analysis in this work would be similar if we replace $\bm{b} \cdot \cG_{\delta}u(\bx) $ with $\cD_\del (\bm{b} u) (\bx)$ in \cref{eqn:NLConfusion} mainly because of the integration by parts formula \eqref{eqn:AdjointNLGradient}. 
\end{remark}

One can show that by defining $\cD_\del$ in \eqref{eqn:NLdivergence}, $-\cD_\del$ forms an adjoint of $\cG_\del$ in the sense that
\beq
\label{eqn:AdjointNLGradient}
(\bm{b}\cdot \cG_\del u, v) = - (u, \cD_\del (\bm{b} v)), 
\eeq
where $(\cdot, \cdot)$ denotes the inner product in $L^2(\Om)$ and $u, v\in L^2(\Om_\del)$ with $u|_{\Om_{\cI_\del}} = v|_{\Om_{\cI_\del}} =0$. 
Indeed, since $u|_{\Om_{\cI_\del}} = v|_{\Om_{\cI_\del}} =0$,  we have 
\[
\begin{split}
&(\bm{b}\cdot \cG_\del u, v) =  \int_{\Om} \int_{\cH_\del(\bx)}   \frac{ \bm{b}(\bx)\cdot (\by -\bx)}{|\by -\bx|} \gamma^{\text{conv}}_{\delta}(|\by-\bx|) (u(\by) - u(\bx)) v(\bx) d\by d\bx \\
= &\int_{\Om_\del} \int_{\Om_\del}  1_{\cH_\del(\bx)}(\by) \frac{ \bm{b}(\bx)\cdot (\by -\bx)}{|\by -\bx|} \gamma^{\text{conv}}_{\delta}(|\by-\bx|) (u(\by) - u(\bx)) v(\bx) d\by d\bx \\
=& - \int_{\Om_\del} \int_{\Om_\del} \left( 1_{\cH_\del(\bx)}(\by) \bm{b}(\bx) u(\bx)v(\bx) + 1_{\cH_\del(\by)}(\bx) \bm{b}(\by) u(\bx)v(\by)\right) \cdot \frac{\by -\bx}{|\by -\bx|} \gamma^{\text{conv}}_{\delta}(|\by-\bx|)  d\by d\bx  \\
=&  - \int_{\Om} u(\bx) \int_{\Om_\del} \left( 1_{\cH_\del(\bx)}(\by) \bm{b}(\bx)v(\bx) + 1_{\cH_\del(\by)}(\bx) \bm{b}(\by) v(\by)\right) \cdot \frac{\by -\bx}{|\by -\bx|} \gamma^{\text{conv}}_{\delta}(|\by-\bx|)  d\by d\bx \\
= & - (u, \cD_\del (\bm{b} v) ). 
\end{split}
\]
With a similar reasoning which we omit here, we can show that $\cL_\del$ is a self-adjoint operator, i.e.,  
\beq
\label{eqn:AdjointNLDiffusion}
(\cL_\del u, v) = (u, \cL_\del v),
\eeq
if $u, v\in L^2(\Om_\del)$ and $u|_{\Om_{\cI_\del}} = v|_{\Om_{\cI_\del}} =0$.

%The well-posedness of \cref{eqn:NLConfusion} is established in \cite{du2014nonlocal,Delia2017}, and its local limit is shown in \cite{du2014nonlocal}.

%Nonlocal convection-dominated diffusion models have been studied in \cite{tian2015nonlocal,tian2017conservative}, but they have chosen special upwind convection kernels to avoid the numerical instabilities after discretization. By applying the PG method with optimal test functions to \cref{eqn:NLConfusion}, the numerical instabilities can be eliminated for general nonlocal kernels and small $\epsilon$.

%but diffusion plays the dominant role. We are interested in convection-dominated diffusion problem where the convection operator dominates the diffusion operator in \cref{eqn:NLConfusion} by perturbing $\epsilon$. Petrov-Galerkin methods have shown great potential in singularly perturbed local problems \tcr{cite}. We show in this work that the Petrov-Galerkin method can also be applied to the nonlocal convection-dominated diffusion problem. 

\subsection{Weak formulation}
We present the weak formulation of \cref{eqn:NLConfusion} in this subsection.
%and the well-posedness of the weak formulation will be shown in \Cref{sec:optnorm} after the establishment of the optimal test space norm.   
We define the natural energy space $\cS_\del(\Om)$ associated with \cref{eqn:NLConfusion} by 
\beq \label{eqn:EnergySpace}
\cS_\del(\Om)= \{ u\in L^2(\Om_\del): \int_{\Om_\del} \int_{\Om_\del}  \ga_\del^{\text{diff}}(|\by-\bx|) (u(\by)-u(\bx))^2 d\by d\bx < \infty, u|_{\Om_{\cI_\del}}=0\}\,. 
\eeq
The norm on $\cS_\del(\Om)$ is given as
\[ 
 \|u\|_{\cS_\del(\Om)} = \| u\|_{L^2(\Omega)} +  |u|_{\cS_\del(\Om)},
\]
where $|u|_{\cS_\del(\Om)}$ is the semi-norm on $\cS_\del(\Om)$, defined as
\[
| u|_{\cS_\del(\Om)} := \left( \int_{\Om_\del} \int_{\Om_\del}  \ga_\del^{\text{diff}}(|\by-\bx|) (u(\by)-u(\bx))^2 d\by d\bx \right)^{1/2}\,.
\]
It is shown in \cite{Mengesha2013,Mengesha2014} that $\cS_\delta(\Om)$ is a Hilbert space equipped with the inner product 
\[
a(u, v) : = \int_{\Om_\del} \int_{\Om_\del}  \ga_\del^{\text{diff}}(|\by-\bx|) (u(\by)-u(\bx))(v(\by)-v(\bx)) d\by d\bx. 
\] 
Because of the nonlocal Poincar\'e-type inequalities \cite{Du2012a,Mengesha2013,Mengesha2014},  the semi-norm $| u|_{\cS_\del(\Om)} $ is also a norm on $\cS_\del(\Om)$. For the rest of the paper, we will simply let $\|\cdot \|_{\cS_\del(\Om)}= |\cdot |_{\cS_\del(\Om)}$  %denote the norm on $\cS_\del(\Om)$ 
and call it the energy norm.

Now we can define the bilinear form, $b(\cdot, \cdot)$, associated with \cref{eqn:NLConfusion} by
\beq \label{ConfusionBilinear}
b(u, v) = \epsilon a(u, v) + \left( \bm{b} \cdot \cG_\del u, v \right) \, , \quad  \forall u, v \in \cS_\del(\Omega).
\eeq
%where $(\cdot, \cdot)$ again is the inner product on $L^2(\Om)$. 
It is trivial to show that $a(u, v) = (-\cL_\del u, v)$ for $u, v\in  \cS_\del(\Omega)$, see \cite{Tian2014a}. 
Then, \cref{eqn:NLConfusion} can be recast into the weak form 
\begin{equation} 
\label{eqn:NLweakform}
\begin{cases}
\textnormal{Find } u \in U , \, \textnormal{ such that}: \\
b(u, v) = (f_\del, v), \quad \forall v \in V ,
\end{cases} 
\end{equation}
where $U$ and $V$ are both spaces of all the functions in $\cS_\del(\Om)$. In this work, we always assume that $U$ is equipped with the energy norm on $\cS_\del(\Om)$, i.e., $\| \cdot \|_U = \| \cdot \|_{\cS_{\delta}(\Om)}$. 
The test space $V$, however, can be endowed with different norms. In the next subsection, we will show $V$ can be equipped with the optimal test norm in the sense of \cref{eqn:AbstractOpt}.

%Take a test function $v\in L^2(\Om_\del)$ with $v=0$ on $\Omega_{\cI_\del}$, then bilinear form corresponding to \eqref{eqn:NLConfusion} is given by 
%\beq
%\begin{split}
%b(u, v) =&  \int_{\Om} \left(-\epsilon \cL_{\delta} u(\bx)  + \bm{b} \cdot \cG_{\delta}u(\bx)\right)  v(\bx) d\bx = \int_{\Om_\del} \left(-\epsilon \cL_{\delta} u(\bx)  + \bm{b} \cdot \cG_{\delta}u(\bx)\right)  v(\bx) d\bx  \\
%= &\epsilon \int_{\Om_\del} \int_{\Om_\del\cap B_\del(\bx)}  \gamma^{\text{diff}}_{\delta}(|\by-\bx|) (u(\by) - u(\bx))(v(\by) - v(\bx)) d\by d\bx \\
%&+  \int_{\Om_\del} \int_{\Om_\del\cap \cH_\del(\bx)} \frac{ \bm{b}(\bx)\cdot (\by -\bx)}{|\by -\bx|} \gamma^{\text{conv}}_{\delta}(|\by-\bx|) (u(\by) - u(\bx)) v(\bx) d\by d\bx\,.
%\end{split}
%\eeq

\subsection{Optimal test space norm}
\label{sec:optnorm} 

Before identifying the optimal test space norm, we need to make some assumptions on the velocity field $\bm{b}(\bx)$. 
We distinguish two cases. In the first case, we assume that the velocity field is a constant and the influence region $\cH_\del(\bx)$ that defines the integral in \cref{eqn:NLGradient}
coincides with $B_\del(\bx)$, see \Cref{assumption1_b}.  In the second case, we allow the velocity field to be a variable with additional assumptions on $\bm{b}(\bx)$ and $\cH_\del(\bx)$, see \Cref{assumption2_b}. 
\begin{assu} 
\label{assumption1_b}
Assume the  velocity field $\bm{b}(\bx)$ is a constant, i.e., 
\[
\bm{b}(\bx) \equiv \bm{b} \in \R^{\nd}, 
\]
and $\cH_\del(\bx) = B_\del(\bx)$.  
\end{assu}

\Cref{assumption1_b} leads to the central nonlocal convection-diffusion model termed in \cite{tian2015nonlocal} which refers to the fact that the nonlocal gradient operator \eqref{eqn:NLGradient} takes a full spherical influence region. It was shown in \cite{tian2015nonlocal} that the standard Galerkin approach for such central model leads to significant instabilities. We will establish, however, that such model is indeed well-posed by the PG approach with optimal test norms.

\begin{assu} 
\label{assumption2_b}
Assume the velocity field $\bm{b} \in L^\infty(\R^\nd; \R^\nd)$ with $-\cD_\del (\bm{b}) \geq 0$, where
\beq
\label{eqn:Divergence_b}
\cD_\del (\bm{b}) (\bx)  = \int_{\R^d}  \left( \bm{b}(\bx) 1_{\cH_\del(\bx)}(\by) +  \bm{b}(\by)1_{\cH_\del(\by)}(\bx) \right)\cdot \frac{\by - \bx}{|\by -\bx |}  \gamma^{\textnormal{conv}}_\del  (|\by -\bx |) d\by. 
\eeq
In addition, $\cH_\del(\bx)$ is the hemisphere defined by 
\beq
\label{eqn:hemispherical}
\cH_\del(\bx) = \left\{ \by\in B_\del(\bx): -\bm{b}(\bx)\cdot (\by -\bx) >0 \right\}. 
\eeq 
\end{assu} 

\Cref{eqn:hemispherical} corresponds to the upwind model in \cite{tian2015nonlocal,tian2017conservative}, which has a hemispherical influence region for nonlocal convection in the direction against the velocity field. The well-posedness of such model, however, is only shown under the assumption of coercivity in \cite{tian2017conservative}. The coercivity assumption is essentially not true for the convection-dominated regime as $\ep\to0$. Therefore, our well-posedness result, which will be presented shortly, provides a remedy for this through the framework of Banach-Ne\v{c}as-Babu\v{s}ka theorem and optimal test norms.

\begin{remark} In \Cref{assumption2_b}, if $\bm{b}(\bx)$ is a constant, then \cref{eqn:hemispherical} implies $\cD_\del(\bm{b}) = 0$. Thus, we only need to assume  \cref{eqn:hemispherical}. Indeed, if $\bm{b}(\bx)\equiv \bm{b}$, then 
\[
\cD_\del (\bm{b}) (\bx) = \bm{b} \cdot \int_{\R^d}  (1_{\cH_\del(\bx)}(\by) +  1_{\cH_\del(\by)}(\bx) )\cdot \frac{\by - \bx}{|\by -\bx |}  \gamma^{\textnormal{conv}}_\del  (|\by -\bx |) d\by, 
\] 
and $\cH_\del(\bx)$ and $\cH_\del(\by)$ become
\[ \cH_\del(\bx) = \left\{ \by\in B_\del(\bx): -\bm{b}\cdot (\by -\bx) >0 \right\}, \quad  \cH_\del(\by) = \left\{ \bx\in B_\del(\by): -\bm{b}\cdot (\bx -\by) >0 \right\}. \]
For any $\bx, \by \in \mathbb{R}^{\nd}$ such that $|\by -\bx|<\del$,  we have
\[
1_{\cH_\del(\by)}(\bx) = 
\begin{cases}
1 \quad \text{if } -\bm{b}\cdot (\bx -\by) >0 \\
0 \quad \text{if } -\bm{b}\cdot (\by -\bx) >0 
\end{cases}
= 1- 1_{\cH_\del(\bx)}(\by) .
\]
Then,
\[
\cD_\del (\bm{b}) (\bx) = \bm{b} \cdot \int_{B_\del(\bx)} \frac{\by - \bx}{|\by -\bx |}  \gamma^{\textnormal{conv}}_\del  (|\by -\bx |) d\by = 0, 
\]
where the last line is understood in the sense of principle value. 
\end{remark}

Now recall that for problem \eqref{eqn:NLweakform}, $U$ and $V$ are both spaces of all functions in $\cS_\del(\Om)$ with the trial norm $\|\cdot\|_{U} = \| \cdot\|_{\cS_\del(\Om)}$. We define the optimal test norm on $V$ by 
\beq \label{eqn:OptNorm}
\| v\|_{\tn{opt}, V} = \sup_{u\in U} \frac{|b(u, v)|}{\| u\|_{U}}, \quad \forall v\in \cS_\del(\Om)\,.
\eeq
To show that \cref{eqn:OptNorm} is indeed well-defined, we have our first observation in the following lemma.

\begin{lem}  \label{lem:optwellposted}
Suppose \cref{eqn:kernel} is satisfied, then for any $v\in \cS_\del(\Om)$
 \[
 \sup_{u\in U} \frac{|b(u, v)|}{\| u\|_{U}} < \infty. 
 \]
\end{lem}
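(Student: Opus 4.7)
The plan is to bound $|b(u,v)|$ by $\|u\|_U$ times a quantity depending only on $v$, splitting $b(u,v) = \epsilon a(u,v) + (\bm{b}\cdot\cG_\del u, v)$ and handling the two terms separately.

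For the diffusion term, since $a(\cdot,\cdot)$ is the inner product that induces the energy norm on $\cS_\del(\Om)$, Cauchy-Schwarz immediately gives $\epsilon|a(u,v)| \le \epsilon \|u\|_U \|v\|_U$, which is finite for any fixed $v \in \cS_\del(\Om)$. This step is routine.

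For the convection term, I would apply Cauchy-Schwarz in $L^2(\Om)$ together with $\|\bm{b}\|_{L^\infty}$ to obtain $|(\bm{b}\cdot\cG_\del u, v)| \le \|\bm{b}\|_{L^\infty} \|\cG_\del u\|_{L^2(\Om)} \|v\|_{L^2(\Om)}$, and then control $\|\cG_\del u\|_{L^2(\Om)}$ by the energy norm $\|u\|_U$. This second bound is the main point of the lemma and is precisely where the kernel relation \eqref{eqn:kernel} enters. Specifically, using $\ga_\del^{\tn{conv}}(|\bs|) = \frac{|\bs|}{\eta}\ga_\del^{\tn{diff}}(|\bs|)$, I would write
\[
|\cG_\del u(\bx)| \le \frac{1}{\eta}\int_{\cH_\del(\bx)} |\by-\bx|\,\ga_\del^{\tn{diff}}(|\by-\bx|)\,|u(\by)-u(\bx)|\,d\by,
\]
and then apply Cauchy-Schwarz inside the integral by splitting $|\by-\bx|\,\ga_\del^{\tn{diff}} = \sqrt{|\by-\bx|^2\ga_\del^{\tn{diff}}}\cdot\sqrt{\ga_\del^{\tn{diff}}}$, yielding
\[
|\cG_\del u(\bx)|^2 \le \frac{1}{\eta^2}\Bigl(\int_{B_\del(\bx)} |\by-\bx|^2 \ga_\del^{\tn{diff}}(|\by-\bx|)\,d\by\Bigr) \Bigl(\int_{\cH_\del(\bx)} \ga_\del^{\tn{diff}}(|\by-\bx|)(u(\by)-u(\bx))^2\,d\by\Bigr).
\]
A change of variables together with the normalization \eqref{eqn:BoundedSecond} shows that the first factor equals $\nd$, so integrating in $\bx$ over $\Om$ and extending the inner integral to $\Om_\del \supset \cH_\del(\bx)$ gives $\|\cG_\del u\|_{L^2(\Om)}^2 \le \frac{\nd}{\eta^2} |u|_{\cS_\del(\Om)}^2 = \frac{\nd}{\eta^2}\|u\|_U^2$.

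Combining the two pieces yields $|b(u,v)| \le \bigl(\epsilon\|v\|_U + \tfrac{\sqrt{\nd}}{\eta}\|\bm{b}\|_{L^\infty}\|v\|_{L^2(\Om)}\bigr)\|u\|_U$, and since $\cS_\del(\Om) \subset L^2(\Om_\del)$ by definition, both factors in front of $\|u\|_U$ are finite for every fixed $v\in\cS_\del(\Om)$. The main (and only nontrivial) obstacle is the control of the nonlocal gradient's $L^2$ norm by the energy norm; this is the structural reason behind imposing \eqref{eqn:kernel}, and it also explains why the same computation will reappear when verifying well-posedness via the inf-sup condition later on. Note that neither case of \Cref{assumption1_b} nor \Cref{assumption2_b} on $\bm{b}$ is needed here: the lemma is a generic continuity-type estimate that only requires $\bm{b}\in L^\infty$ and the kernel relation.
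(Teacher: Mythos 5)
Your proposal is correct and follows essentially the same route as the paper: both proofs bound the diffusion term by Cauchy--Schwarz in the energy inner product and control the convection term by a double application of Cauchy--Schwarz, using the kernel relation \eqref{eqn:kernel} to convert $\ga^{\tn{conv}}_\del$ into $\ga^{\tn{diff}}_\del$ and the moment normalization to produce a finite constant, arriving at the same bound $|b(u,v)|\le(\ep\|v\|_{\cS_\del(\Om)}+C\|\bm{b}\|_{L^\infty}\|v\|_{L^2(\Om)})\|u\|_U$. The only difference is the order of operations (you isolate $\|\cG_\del u\|_{L^2(\Om)}$ first, whereas the paper applies the inner Cauchy--Schwarz before integrating in $\bx$), which is immaterial.
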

 \begin{proof}
From the definition of the energy space \cref{eqn:EnergySpace}, the first term in \cref{ConfusionBilinear} can be bounded by 
\[
\ep |a(u, v)| \leq \ep \| u\|_{\cS_\del(\Om)} \| v\|_{\cS_\del(\Om)} \,,
\]
for all $u, v\in \cS_\del(\Om)$. 
For the second term in \cref{ConfusionBilinear}, by Cauchy-Schwarz inequality and \cref{eqn:BoundedFirst} we have
\[
\begin{split}
&\left|\left(\bm{b}\cdot \cG_\del u, v \right)\right| = \left| \int_\Om  \int_{\cH_\del(\bx)} \frac{\bm{b}(\bx)\cdot (\by-\bx) }{|\by-\bx|} \ga^{\text{conv}}_{\del}(|\by-\bx|) (u(\by)-u(\bx)) v(\bx) d\by d\bx \right|,  \\
\leq& \| \bm{b}\|_{L^\infty} \left|\mathlarger{\int}_\Om \left( \int_{\R^\nd} \sqrt{|\bs|\ga^{\text{conv}}_\del(|\bs|)} \cdot \sqrt{\frac{1}{|\bs|}\ga^{\text{conv}}_\del(|\bs|)} |u(\bx+\bs)-u(\bx)| d\bs \right) v(\bx) d\bx \right|,  \\
\leq  &\| \bm{b}\|_{L^\infty}   \mathlarger{\int}_\Om  \left(\int |\bs|\ga^{\text{conv}}_\del (|\bs|) d\bs \right)^{1/2}\left( \int \frac{1}{|\bs|}\ga^{\text{conv}}_\del(|\bs|) (u(\bx+\bs)-u(\bx))^2 d\bs \right)^{1/2} v(\bx) d\bx,  \\
\leq &C \| \bm{b}\|_{L^\infty}  \left( \int_\Om \int_{B_\del(\bm{0})} \frac{1}{|\bs|} \ga^{\text{conv}}_\del(|\bs|) (u(\bx+\bs)-u(\bx))^2 d\bs d\bx  \right)^{1/2} \left(\int_\Om v^2(\bx) d\bx\right)^{1/2}, \\
\leq &C \| \bm{b}\|_{L^\infty}  \| u\|_{\cS_\del(\Om)} \| v\|_{L^2(\Om)}\,,
\end{split}
\]
 where we have used \cref{eqn:kernel} in the last line.  
Finally, by collecting terms, we have
\[
 \sup_{u\in U} \frac{|b(u, v)|}{\| u\|_{U}}  \leq C(\ep \| v\|_{\cS_\del(\Om)} +\| v\|_{L^2(\Om)}) <\infty, \quad \forall v\in \cS_\del(\Om).
\]
 \end{proof}
 
 From the above lemma, we know that the quantity in \cref{eqn:OptNorm} is well-defined for every $v\in \cS_\del(\Om)$. 
 Furthermore, we can characterize it more precisely as shown in the following. 

\begin{lem}\label{lem:OptTestNorm}
Let $\| \cdot\|_{\tn{opt}, V}$ be defined by \cref{eqn:OptNorm}, then $\| \cdot\|_{\tn{opt}, V}$ is a norm on $V$ if  \Cref{assumption1_b} or  \Cref{assumption2_b} is satisfied. 
 More precisely, if \Cref{assumption1_b} is satisfied, then 
\beq \label{eqn:ExplicitOpt}
\| v\|^2_{\tn{opt}, V} = \ep^2 \| v\|^2_{\cS_\del(\Om)} +   \left(\cD_\del(\bm{b} v), (-\cL_\del)^{-1}\cD_\del(\bm{b} v) \right), \quad \forall v \in \cS_\del(\Om). 
\eeq
If \Cref{assumption2_b} is satisfied, then  
\beq \label{eqn:ExplicitOpt2}
\| v\|^2_{\tn{opt}, V} = \ep^2 \| v\|^2_{\cS_\del(\Om)} + 2\ep (\bm{b}\cdot\cG_\del v, v) + \left(\cD_\del(\bm{b} v), (-\cL_\del)^{-1}\cD_\del(\bm{b} v) \right) , \quad \forall v \in \cS_\del(\Om). 
\eeq
In \cref{eqn:ExplicitOpt,eqn:ExplicitOpt2}, $\cD_\del(\bm{b} v)$ is defined in \eqref{eqn:NLdivergence}, and $(-\cL_\del)^{-1}$ is the inverse of the negative nonlocal diffusion operator $-\cL_\del$. 
\end{lem}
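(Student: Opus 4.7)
The strategy rests on recognizing that $(U,a(\cdot,\cdot))$ is a Hilbert space---the nonlocal Poincar\'e inequality makes $a$ an inner product whose induced norm is exactly $\|\cdot\|_{\cS_\del(\Om)}$---and that, by the preceding lemma, the linear functional $u\mapsto b(u,v)$ is bounded on $U$ for each fixed $v\in V$. Riesz representation in $(U,a)$ therefore produces a unique $w_v\in U$ with $a(u,w_v)=b(u,v)$ for every $u\in U$, which gives
\[
\| v\|_{\tn{opt},V}=\sup_{u\in U}\frac{|b(u,v)|}{\| u\|_U}=\| w_v\|_U=\sqrt{a(w_v,w_v)}.
\]
The proof then splits into (i) identifying $w_v$ explicitly and (ii) verifying that the resulting expression defines a norm.

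For (i), I would use the self-adjointness of $\cL_\del$ and the adjoint identity \eqref{eqn:AdjointNLGradient} to rewrite $b(u,v)=\ep\,a(u,v)-(u,\cD_\del(\bm{b} v))$. Since the Riesz identity $(u,\psi)=a(u,(-\cL_\del)^{-1}\psi)$ applies (with $\psi=\cD_\del(\bm{b} v)$ in the dual of $U$), setting $z:=(-\cL_\del)^{-1}\cD_\del(\bm{b} v)\in U$ yields $b(u,v)=a(u,\ep v-z)$, so $w_v=\ep v-z$. Expanding
\[
\| v\|_{\tn{opt},V}^2=a(\ep v-z,\ep v-z)=\ep^2 a(v,v)-2\ep\,a(v,z)+a(z,z),
\]
and computing $a(v,z)=(v,\cD_\del(\bm{b} v))=-(\bm{b}\cdot\cG_\del v,v)$ together with $a(z,z)=(z,\cD_\del(\bm{b} v))=(\cD_\del(\bm{b} v),(-\cL_\del)^{-1}\cD_\del(\bm{b} v))$ immediately produces \eqref{eqn:ExplicitOpt2} under \Cref{assumption2_b}. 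Under \Cref{assumption1_b}, a one-line antisymmetrization in $(\bx,\by)$---rewriting $2(\bm{b}\cdot\cG_\del v,v)$ as an integral of $\frac{\bm{b}\cdot(\by-\bx)}{|\by-\bx|}\gamma_\del^{\tn{conv}}(v(\by)^2-v(\bx)^2)$ over the symmetric set $\Om\times B_\del(\bx)$---shows the cross term vanishes, reducing \eqref{eqn:ExplicitOpt2} to \eqref{eqn:ExplicitOpt}.

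For (ii), nonnegativity, homogeneity, and the triangle inequality follow at once from $\| v\|_{\tn{opt},V}=\| w_v\|_U$ and the linearity of $v\mapsto w_v$. The delicate point is definiteness: $\| v\|_{\tn{opt},V}=0$ forces $w_v=0$, i.e.\ $-\ep\cL_\del v=\cD_\del(\bm{b} v)$, and pairing with $v$ gives $\ep\| v\|_{\cS_\del(\Om)}^2=-(\bm{b}\cdot\cG_\del v,v)$. Under \Cref{assumption1_b} the right-hand side is zero by the antisymmetry noted above, hence $v=0$. Under \Cref{assumption2_b}, a symmetrization of $2(v,\cD_\del(\bm{b} v))$ expresses it as
\[
(v^2,\cD_\del(\bm{b}))+\int_\Om\int_{\cH_\del(\bx)}\bm{b}(\bx)\cdot\frac{\by-\bx}{|\by-\bx|}\gamma_\del^{\tn{conv}}(|\by-\bx|)(v(\bx)-v(\by))^2\,d\by\,d\bx,
\]
which is nonpositive because $-\cD_\del(\bm{b})\ge0$ by hypothesis and $\bm{b}(\bx)\cdot(\by-\bx)<0$ on $\cH_\del(\bx)$ by \eqref{eqn:hemispherical}. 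Hence $(\bm{b}\cdot\cG_\del v,v)\ge0$, so $\ep\| v\|_{\cS_\del(\Om)}^2\le 0$, and again $v=0$.

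The hard part is this last sign argument under \Cref{assumption2_b}: it is precisely the structural interplay between the upwind choice of $\cH_\del$ and the divergence hypothesis $-\cD_\del(\bm{b})\ge 0$ that pins down the sign of $(\bm{b}\cdot\cG_\del v,v)$, and without both ingredients the expression in \eqref{eqn:ExplicitOpt2} could fail to be definite even when the bilinear form itself is nondegenerate in the abstract sense of \eqref{eqn:definite}.
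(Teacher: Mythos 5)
Your proof is correct and follows essentially the same route as the paper's: you identify the representer $w_v=\ep v-(-\cL_\del)^{-1}\cD_\del(\bm{b} v)$ of $b(\cdot,v)$ in the $a$-inner product (the paper's $\psi$), expand $a(w_v,w_v)$ using the adjoint identities \eqref{eqn:AdjointNLDiffusion} and \eqref{eqn:AdjointNLGradient}, and settle definiteness through the sign of $(\bm{b}\cdot\cG_\del v,v)$ together with the positivity of $(-\cL_\del)^{-1}$, exactly as in the paper. Your explicit appeal to the Riesz representation theorem, justified by the boundedness established in \Cref{lem:optwellposted}, is a marginally cleaner way of producing $\psi$ than the paper's ``assume there is $\psi$, then solve for it'' phrasing, but the substance is identical.
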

\begin{proof}
Assume there is $\psi \in \cS_\del(\Om)$ such that
 \beq  \label{eqn:psi_def}
a(\psi , u) =b(u, v)\,, \quad \forall u \in \cS_\del(\Om),
 \eeq
 then from \cref{eqn:OptNorm}, we have
\beq \label{eqn:NormEquivalence}
\| \psi \|_{\cS_\del(\Omega)} =\sup_{u\in \cS_\del(\Om) } \frac{|a(\psi, u)|}{\| u \|_{\cS_\del(\Om)}} = \sup_{u\in U  } \frac{|b(u, v)|}{\| u\|_{U}} = \| v\|_{\tn{opt}, V}\,.
\eeq
We only need to find $ \| \psi \|_{\cS_\del} $ in order to characterise $\| v\|_{\tn{opt},V}$. 
By rewriting \cref{eqn:psi_def}, we arrive at, for every $u \in \cS_\del(\Om)$,
\begin{equation} \label{eqn:AdditionalProblem}
\begin{split}
(-\cL_{\del}  \psi,  u) &= \ep (-\cL_{\del}  u, v) + \left(\bm{b}\cdot \cG_\del  u, v\right) =  (u, -\ep \cL_\del  v) - \left( u, \cD_\del(\bm{b} v) \right), \\ 
&=(-\ep \cL_\del v - \cD_\del(\bm{b} v) , u) \, .
\end{split}
\end{equation}
where we have used \cref{eqn:AdjointNLDiffusion,eqn:AdjointNLGradient}. 
\Cref{eqn:AdditionalProblem} has a unique solution  $\psi \in \cS_\del(\Om)$ because the nonlocal diffusion problem on $\cS_\del(\Om)$ is well-posedness \cite{Mengesha2014}, namely, $\cL_\del $ is invertible given the Dirichlet boundary condition. Thus, $\psi$ is given as 
\[
\psi  = \ep \cL_\del^{-1} \cL_\del v + \cL_\del^{-1} \cD_\del(\bm{b} v) 
=\ep v + \cL_\del^{-1}\cD_\del(\bm{b} v)   \,.
\]
We can write the energy norm of $\psi$ as 
\[
\begin{split}
\| \psi\|^2_{\cS_\del} &= a(\psi, \psi) = \left(-\cL_\del  \psi , \psi ) =( -\ep \cL_\del v - \cD_\del(\bm{b} v), \ep v + \cL_\del^{-1}\cD_\del(\bm{b} v) \right), \\
&= -\ep^2 (\cL_\del  v , v) - \ep \left(\cL_\del  v,\cL_\del^{-1}\cD_\del(\bm{b} v)  \right) - \ep (\cD_\del(\bm{b} v), v) - \left(\cD_\del(\bm{b} v), \cL_\del^{-1}\cD_\del(\bm{b} v) \right),\\
&=  -\ep^2 (\cL_\del  v , v)  - 2\ep (\cD_\del(\bm{b} v), v) - \left(\cD_\del(\bm{b} v), \cL_\del^{-1}\cD_\del(\bm{b} v) \right), \\
&=  \ep^2 a( v , v)  + 2\ep (\bm{b}\cdot \cG_\del v, v) + \left(\cD_\del(\bm{b} v), (-\cL_\del)^{-1}\cD_\del(\bm{b} v) \right). 
\end{split}
\]
The quantity above is the same as \cref{eqn:ExplicitOpt2}. In the case of \Cref{assumption1_b},   $(\bm{b}\cdot \cG_\del v, v)=0$ by \cref{eqn:assu1}. Therefore we arrive at \cref{eqn:ExplicitOpt}. 

We are left to show  \cref{eqn:ExplicitOpt} or  \cref{eqn:ExplicitOpt2} is indeed a norm,  namely, 
if $\|v \|_{\tn{opt}, V}= 0$, then $v\equiv 0$. Notice that $(-\cL_\del)^{-1}$ is a positive definite and self-adjoint operator, thus  $ \left(\cD_\del(\bm{b} v), (-\cL_\del)^{-1}\cD_\del(\bm{b} v) \right)\geq 0$. Moreover, by  \cref{eqn:assu2}, we have  $(\bm{b}\cdot \cG_\del v, v) \geq 0$. Finally, we have $\|v \|_{\tn{opt}, V}\geq \ep \| v\|_{\cS_\del(\Om)}\geq C \ep \| v\|_{L^2(\Om)}\geq 0$, and the equality holds  only if $v\equiv0$.  
\end{proof}

\subsection{Well-posedness of the nonlocal convection-diffusion problem}
In order to establish the well-posedness of \cref{eqn:NLweakform}, we first need to show that the bilinear form $b(u, v)$ given by \cref{ConfusionBilinear} is definite, i.e.,  it satisfies \cref{eqn:definite}.

\begin{lem}\label{lem:definite}
Suppose that \Cref{assumption1_b} or \Cref{assumption2_b} is satisfied, then $b(u, v)$ defined by \eqref{ConfusionBilinear} is definite, i.e., 
\[
\text{if } b(u, v) = 0, \quad \forall u \in \cS_\del(\Om), \text{ then } v=0.  
\]
\end{lem}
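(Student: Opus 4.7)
The plan is to exploit the fact that $V = U = \cS_\del(\Om)$ here, so we can use $v$ itself as a test function in the hypothesis $b(u,v) = 0$ for all $u \in \cS_\del(\Om)$. Setting $u = v$ yields
\[
0 = b(v,v) = \ep\, a(v,v) + (\bm{b}\cdot \cG_\del v, v) = \ep\, \|v\|^2_{\cS_\del(\Om)} + (\bm{b}\cdot \cG_\del v, v).
\]
If I can show that the convective term $(\bm{b}\cdot \cG_\del v, v)$ is nonnegative under either \Cref{assumption1_b} or \Cref{assumption2_b}, then each summand on the right is nonnegative and must vanish, giving $\|v\|_{\cS_\del(\Om)} = 0$ (since $\ep > 0$), hence $v = 0$ because the energy semi-norm is in fact a norm on $\cS_\del(\Om)$.

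The nonnegativity of $(\bm{b}\cdot \cG_\del v, v)$ is precisely the property labeled as equations \texttt{(assu1)} and \texttt{(assu2)} that was already invoked in the proof of \Cref{lem:OptTestNorm}, so I will simply cite it. For intuition, though, the two cases reduce to:

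Under \Cref{assumption1_b}, the kernel of $\bm{b}\cdot \cG_\del$ is antisymmetric in $(\bx,\by)$ because $\bm{b}$ is constant and $\cH_\del(\bx) = B_\del(\bx)$ is symmetric. A short calculation (adding the integral to itself with $\bx$ and $\by$ swapped) then gives $(\bm{b}\cdot \cG_\del v, v) = 0$ outright. Under \Cref{assumption2_b}, I would combine the adjoint identity \eqref{eqn:AdjointNLGradient} with a nonlocal product-rule-type manipulation to express
\[
2(\bm{b}\cdot \cG_\del v, v) = -\bigl(\cD_\del(\bm{b}),\, v^2\bigr) + (\text{nonnegative remainder from the upwind asymmetry of }\cH_\del),
\]
so nonnegativity follows from the hypothesis $-\cD_\del(\bm{b}) \geq 0$ together with the fact that the hemisphere \eqref{eqn:hemispherical} is the upwind half-ball and hence the remainder integrand has a definite sign.

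The only real work here is the verification of $(\bm{b}\cdot \cG_\del v, v) \geq 0$ in the variable-$\bm{b}$ case of \Cref{assumption2_b}; once that is in hand, the argument collapses to two lines. Since this inequality was already invoked (without restatement) in \Cref{lem:OptTestNorm}, my proof will simply record the testing step $u = v$, cite the relevant nonnegativity identity, and read off $v = 0$ from positivity of $\ep\|v\|^2_{\cS_\del(\Om)}$.
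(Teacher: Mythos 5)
Your proposal is correct and follows essentially the same route as the paper: test with $u=v$, observe $\ep\,a(v,v)>0$ unless $v=0$ (via the nonlocal Poincar\'e inequality), and show $(\bm{b}\cdot\cG_\del v, v)=0$ under \Cref{assumption1_b} by antisymmetry and $(\bm{b}\cdot\cG_\del v, v)\ge 0$ under \Cref{assumption2_b} via the adjoint identity \eqref{eqn:AdjointNLGradient}, a symmetrization, and the sign conditions on $\cD_\del(\bm b)$ and the upwind hemisphere. One caution: the identities \eqref{eqn:assu1} and \eqref{eqn:assu2} you propose to ``simply cite'' are in fact established \emph{inside the paper's proof of this very lemma} (and only forward-referenced from \Cref{lem:OptTestNorm}), so your proof must carry out the verification you sketch rather than defer it.
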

\begin{proof}
Since $b(u, v) = 0 $ for all $u \in \cS_\del(\Om)$, taking $u=v$, we have 
\beq \label{eqn:confusiondefinite}
0= b(v,v) = \ep a(v, v) + (\bm{b} \cdot \cG_\del v, v).
\eeq
From the nonlocal Poincar\'e inequality on $\cS_\del(\Om)$ \cite{Du2012a,Mengesha2013,Mengesha2014}, we know that 
\[ 
a(v, v) \geq C \|v \|^2_{L^2(\Om)} \geq 0. 
\]
Herein and for the rest of the work, $C > 0$ is a generic constant. Next, we address the second term on the right hand side of \cref{eqn:confusiondefinite} based on \Cref{assumption1_b} and \ref{assumption2_b}. 

If \Cref{assumption1_b} is satisfied, then we want to show  
\beq
\begin{split} 
\label{eqn:assu1}
0  = (\bm{b} \cdot \cG_\del v, v) % & =  \bm{b} \cdot \int_{\Om} \int_{B_\del(\bx)} \frac{ \by -\bx}{|\by -\bx|} \gamma^{\text{conv}}_{\delta}(|\by-\bx|) (v(\by) - v(\bx)) v(\bx) d\by d\bx \\
& = \bm{b} \cdot  \int_{\Om} \int_{B_\del(\bx)}   \frac{ \by -\bx}{|\by -\bx|} \gamma^{\text{conv}}_{\delta}(|\by-\bx|)  v(\by)  v(\bx) d\by d\bx \\
& \quad - \bm{b} \cdot  \int_{\Om} \left(\int_{B_\del(\bx)}  \frac{ \by -\bx}{|\by -\bx|} \gamma^{\text{conv}}_{\delta}(|\by-\bx|) d\by\right) v^2(\bx) d\bx .
\end{split}
\eeq 
The first term on the right hand side of \cref{eqn:assu1} is zero, because 
\[
\begin{split}
& \int_{\Om} \int_{B_\del(\bx)}   \frac{ \by -\bx}{|\by -\bx|} \gamma^{\text{conv}}_{\delta}(|\by-\bx|)  v(\by)  v(\bx) d\by d\bx  \\
& = \int_{\Om_\del} \int_{\Om_\del}   \frac{ \by -\bx}{|\by -\bx|} \gamma^{\text{conv}}_{\delta}(|\by-\bx|)  v(\by)  v(\bx) d\by d\bx =  0, 
\end{split}
\]
where we have used the antisymmetry of the integrand in the last line.
The second term on the right hand side of \cref{eqn:assu1} is also zero, since
\[
\int_{\Om} \left(\int_{B_\del(\bx)}  \frac{ \by -\bx}{|\by -\bx|} \gamma^{\text{conv}}_{\delta}(|\by-\bx|) d\by\right) v^2(\bx) d\bx  = \int_{\Om} \left(\int_{B_\del(\bx)}  \frac{ \bs}{|\bs|} \gamma^{\text{conv}}_{\delta}(|\bs|) d\bs \right) v^2(\bx) d\bx  = 0,
\]
by the antisymmetry of the function $\bs \gamma^{\text{conv}}_{\delta}(|\bs|)$.  Therefore, under \Cref{assumption1_b}, we have $b(v,v) \geq C \|v \|^2_{L^2(\Om)} \geq 0$, and the equality holds only if $v=0$. 

If \Cref{assumption2_b} is satisfied, using \cref{eqn:AdjointNLGradient}, we have 
\beq 
\label{eqn:assu2}
\begin{split}
&(\bm{b} \cdot \cG_\del v, v)   = - (v, \cD_\del(\bm{b} v))  \\
&=  - \int_{\Om_\del} \int_{\Om_\del} v(\bx) \left( 1_{\cH_\del(\bx)}(\by) \bm{b}(\bx) v(\bx) + 1_{\cH_\del(\by)}(\bx) \bm{b}(\by) v(\by)\right)\cdot \frac{\by -\bx}{|\by -\bx|} \gamma^{\text{conv}}_{\delta}(|\by-\bx|)  d\by d\bx\\
&= \frac{1}{2} \int_{\Om_\del} \int_{\Om_\del}(v(\by)- v(\bx)) \left( 1_{\cH_\del(\bx)}(\by) \bm{b}(\bx) v(\bx) + 1_{\cH_\del(\by)}(\bx) \bm{b}(\by) v(\by)\right)\cdot \frac{\by -\bx}{|\by -\bx|} \gamma^{\text{conv}}_{\delta}(|\by-\bx|)  d\by d\bx \\
&=  \frac{1}{2}\int_{\Om_\del} \int_{\Om_\del}(v(\by)- v(\bx)) v(\bx)\left( 1_{\cH_\del(\bx)}(\by) \bm{b}(\bx)  +  1_{\cH_\del(\by)}(\bx) \bm{b}(\by) \right)\cdot \frac{\by -\bx}{|\by -\bx|} \gamma^{\text{conv}}_{\delta}(|\by-\bx|)  d\by d\bx  \\
& \qquad +   \frac{1}{2}\int_{\Om_\del} \int_{\Om_\del}(v(\by)- v(\bx))^2  1_{\cH_\del(\by)}(\bx) \bm{b}(\by) \cdot \frac{\by -\bx}{|\by -\bx|} \gamma^{\text{conv}}_{\delta}(|\by-\bx|)  d\by d\bx \\
&= -  \frac{1}{2}\int_{\Om_\del} v^2(\bx)\int_{\Om_\del}\left( 1_{\cH_\del(\bx)}(\by) \bm{b}(\bx)  +  1_{\cH_\del(\by)}(\bx) \bm{b}(\by) \right)\cdot \frac{\by -\bx}{|\by -\bx|} \gamma^{\text{conv}}_{\delta}(|\by-\bx|)  d\by d\bx  \\
& \qquad + \frac{1}{2} \int_{\Om_\del} \int_{\Om_\del}(v(\by)- v(\bx))^2  1_{\cH_\del(\bx)}(\by)\bm{b}(\bx) \cdot \frac{\bx -\by}{|\by -\bx|} \gamma^{\text{conv}}_{\delta}(|\by-\bx|)  d\by d\bx  \\ 
&= - \frac{1}{2}\int_{\Om} v^2(\bx) \cD_\del(\bm{b}) d\bx   +   \frac{1}{2}\int_{\Om_\del} \int_{\cH_\del(\bx)}(v(\by)- v(\bx))^2 \bm{b}(\bx) \cdot \frac{\bx -\by}{|\by -\bx|} \gamma^{\text{conv}}_{\delta}(|\by-\bx|)  d\by d\bx  \geq 0, 
\end{split}
\eeq
where we have used \cref{eqn:Divergence_b,eqn:hemispherical} in the last step. Therefore, under \Cref{assumption2_b}, we have  $b(v,v) \geq C \|v \|^2_{L^2(\Om)} \geq 0$, and the equality holds only if $v=0$. 
\end{proof}

Now, we are ready to show the well-posedness of the weak form of the nonlocal convection-diffusion equation given by  \cref{ConfusionBilinear,eqn:NLweakform} for any given diffusion parameter $\ep>0$ using the optimal test norm on $V$.  
\begin{thm}
Assume that \Cref{assumption1_b} or \Cref{assumption2_b} is satisfied and let $U$ be equipped with the energy norm $\| \cdot\|_{\cS_\del(\Om)}$, and $V$ be equipped with optimal test norm $\| \cdot\|_{\tn{opt}, V}$ defind by \eqref{eqn:OptNorm}, then the nonlocal convection-diffusion equation given by  \cref{ConfusionBilinear,eqn:NLweakform} is well-posed for any $\ep>0$. More precisely, for any $f_\del \in V^\ast$, there exists a unique solution $u \in U$ such that 
\[
\| u\|_U = \| f_\del\|_{V^\ast}, 
\]
where $V^\ast$ denotes the dual of $V$ and $\| f_\del\|_{V^\ast}:= \sup_{v\in V} \frac{|(f_\del, v)|}{\| v\|_{\tn{opt},V}} $.
\end{thm}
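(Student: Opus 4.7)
The plan is to apply the Banach-Ne\v{c}as-Babu\v{s}ka theorem to the weak problem \eqref{eqn:NLweakform}. Three ingredients are needed: continuity of $b(\cdot,\cdot)$ on $U\times V$, an inf--sup condition on $b$, and the definiteness of $b$ in the test variable. The last is precisely \Cref{lem:definite}; continuity with constant $1$ is immediate from the very definition \eqref{eqn:OptNorm} of the optimal test norm, since $|b(u,v)|/\|u\|_U \le \|v\|_{\tn{opt},V}$ for every $u,v$. What is left is the inf--sup, and my target is to show the constant is exactly $\alpha=1$, which is what will make the sharp identity $\|u\|_U=\|f_\del\|_{V^\ast}$ fall out for free.

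The key object is the operator $B^{\ast}:V\to U^{\ast}$ defined by $(B^{\ast}v)(u):=b(u,v)$. By the definition of the optimal test norm, $\|B^{\ast}v\|_{U^{\ast}}=\|v\|_{\tn{opt},V}$, so $B^{\ast}$ is an isometric embedding; in particular its range is closed in $U^{\ast}$. The main step is to prove $B^{\ast}$ is surjective. If not, by Hahn-Banach there would exist a nonzero continuous functional on $U^{\ast}$ vanishing on $\operatorname{range}(B^{\ast})$; since $U$ is a Hilbert space (hence reflexive), this functional is represented by some $u_0\neq 0$ in $U$, and the vanishing translates to $b(u_0,v)=0$ for every $v\in V$. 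However, the proof of \Cref{lem:definite} in fact establishes the stronger coercivity-in-$L^{2}$ bound $b(w,w)\geq C\|w\|_{L^{2}(\Om)}^{2}$ for every $w\in\cS_\del(\Om)$ (separately under \Cref{assumption1_b} and under \Cref{assumption2_b}). Specializing $v=u_0$ therefore forces $u_0=0$, a contradiction. Hence $B^{\ast}:V\to U^{\ast}$ is an isometric isomorphism.

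With this, for any $u\in U$ the image of the closed unit ball of $V$ under $B^{\ast}$ is precisely the closed unit ball of $U^{\ast}$, so
\[
\sup_{v\in V}\frac{|b(u,v)|}{\|v\|_{\tn{opt},V}} \;=\; \sup_{\phi\in U^{\ast},\,\|\phi\|_{U^{\ast}}\le 1}|\phi(u)| \;=\; \|u\|_U,
\]
which is the inf--sup condition with $\alpha=1$. The Banach-Ne\v{c}as-Babu\v{s}ka theorem then yields a unique $u\in U$ satisfying $b(u,v)=(f_\del,v)$ for every $v\in V$, together with $\|u\|_U\le\|f_\del\|_{V^{\ast}}/\alpha=\|f_\del\|_{V^{\ast}}$. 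The reverse inequality follows from continuity: $|(f_\del,v)|=|b(u,v)|\le\|u\|_U\|v\|_{\tn{opt},V}$ gives $\|f_\del\|_{V^{\ast}}\le\|u\|_U$, so equality holds.

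The main obstacle is really the surjectivity of $B^{\ast}$; every other ingredient is a soft consequence of the definition of the optimal test norm together with BNB. That surjectivity step in turn rests on the coercivity lower bound $b(w,w)\gtrsim\|w\|_{L^{2}(\Om)}^{2}$, which the case analyses in the proof of \Cref{lem:definite} already produce, and which must be read off (rather than cited as a separate statement) in order to rule out a non-trivial annihilator of $\operatorname{range}(B^{\ast})$.
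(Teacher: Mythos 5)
Your proof is correct and takes essentially the same route as the paper: continuity and the inf--sup condition with constant $\alpha=1$ are derived from the definition of the optimal test norm, definiteness is supplied by the coercivity computation $b(w,w)\ge C\|w\|_{L^2(\Om)}^2$ in \Cref{lem:definite}, and the Banach--Ne\v{c}as--Babu\v{s}ka theorem plus the continuity bound $|b(u,v)|\le\|u\|_U\|v\|_{\tn{opt},V}$ give existence, uniqueness, and the identity $\|u\|_U=\|f_\del\|_{V^\ast}$. The only substantive difference is that you make explicit the surjectivity of $B^\ast$ (via the isometry property, Hahn--Banach, reflexivity, and the $L^2$-coercivity read off from \Cref{lem:definite}) where the paper compresses this into the bare assertion that definiteness and well-definedness of the norm yield $\alpha=1$; just note that your ``closed range'' step tacitly uses completeness of $(V,\|\cdot\|_{\tn{opt},V})$, which follows from the two-sided equivalence $\ep\|v\|_{\cS_\del(\Om)}\le\|v\|_{\tn{opt},V}\le C\|v\|_{\cS_\del(\Om)}$ furnished by \Cref{lem:optwellposted,lem:OptTestNorm}.
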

\begin{proof}
Notice that under \Cref{assumption1_b} or \Cref{assumption2_b}, $b(u, v)$ is definite, see \Cref{lem:definite} and $\| \cdot\|_{\tn{opt}, V}$ is well-defined, see \Cref{lem:optwellposted}. Therefore, the inf-sup condition \eqref{eqn:infsup} holds with $\alpha=1$. By the Banach-Ne\v{c}as-Babu\v{s}ka theorem  \cite{ern2013theory,oden2018applied}, \cref{eqn:NLweakform} is well-posed and 
\[
\| u\|_U = \sup_{v\in V} \frac{|b(u,v)|}{\| v\|_{\tn{opt},V}} =\sup_{v\in V} \frac{|(f_\del, v)|}{\| v\|_{\tn{opt},V}}  = \| f_\del\|_{V^\ast}.  
\]
\end{proof}

\begin{comment}
\begin{lem} $\|v \|_{V}$ defined by \eqref{norm_v} is indeed a norm, namely 
if $\|v \|_{V}= 0$, then $v\equiv 0$. 
\end{lem}
\begin{proof}
Since $\| \psi \|_{\cS_\del}= \|v \|_{V}$, then $\| \psi \|_{\cS_\del}=0$ if $\|v \|_{V}= 0$. Now that $\| \cdot \|_{\cS_\del}$
is a norm on $\cS_\del$, we have $\psi\equiv0$. Then by equation \eqref{psi_diff}, we have
\[
0 = \ep \cL_\del  v - \cG_\del^\ast v\,.
\]
Using $v$ as a test function to the above equation, we have
\[
0= \ep (\cL_\del  v,v )- (\cG_\del^\ast v, v)=  \ep a (v, v) = \ep \| v\|^2_{\cS_\del}\,,
\]
which implies $v\equiv 0$.
\end{proof}
\end{comment}

\section{Numerical experiments} \label{sec:numericalexamples}

Even though we have found the explicit formula of the optimal test space norm, as shown in \Cref{lem:OptTestNorm}, for the nonlocal convection-dominated diffusion problem \eqref{eqn:NLConfusion}, it involves the computation of the inverse operator $(-\cL_{\delta})^{-1}$ which is computationally prohibitive in practice. 
To alleviate the computational difficulties, we now limit ourselves to one dimension ($\nd=1$) and discuss an approximation of the optimal test norm \eqref{eqn:OptNorm}. 
We further assume a constant velocity field $b(x)\equiv 1$ and $\Om = (0,1)$ in this section. Since the velocity field is a constant, we can choose the influence region $\cH_\del(x)=B_\del(x)$ as given in \Cref{assumption1_b}. The explicit formula of the optimal test norm is given by \cref{eqn:ExplicitOpt}.
For hemispherical influence regions as given in \Cref{assumption2_b}, we refer readers to \cite{tian2015nonlocal,tian2017conservative} for more discussions on numerical discretization. 

In this section, we first present an approximation to the optimal test space norm \eqref{eqn:ExplicitOpt} in one dimension ($\nd=1$). Then, we select a nonlocal kernel and discuss quadrature rules to calculate the matrices because the integration error plays a significant role in variational nonlocal problems. Finally, we present convergence results using manufactured solutions.

\subsection{Approximations to the optimal test space norm in one dimension}

The main difficulty for computing with the optimal test norm given in \Cref{lem:OptTestNorm} is the term $ \left(\cD_\del(\bm{b} v), (-\cL_\del)^{-1}\cD_\del(\bm{b} v) \right)$, which involves the inverse operator $(-\cL_\del)^{-1}$. We show next that $(-\cL_\del)^{-1}$ can be approximated using its local limit, $(-\cL_0)^{-1}$. 
Since $ \nd=1$ and $b(x)\equiv 1$, the local limit of the operators $\cD_\del$ and $\cL_\del$ are $\frac{d}{dx}$ and $ \frac{d^{\,2}}{dx^2}$ respectively. 
In this case, the explicit expression of local limit of the term can be computed as shown in the following lemma.

\begin{lem} \label{lem:localopt}
Let $\nd=1$ and $\Om = (0,1)$. Assume that $(-\frac{d^{\,2}}{dx^2})^{-1}$ is the inverse of $-\frac{d^{\,2}}{dx^2}$ understood in the sense of homogeneous Dirichlet boundary condition, then we have 
\beq \notag 
 \left(\frac{d}{dx} v, \left(-\frac{d^{\,2}}{dx^2}\right)^{-1}\frac{d}{dx} v  \right) = \| v -\overline{v} \|_{L^2(\Om)}^2 , \quad \forall v\in H_0^1(\Om)\,,
\eeq 
where $\overline{v}$ denotes the average of $v$ on $\Om$, i.e., 
 \[
\overline{v}= \int_\Om v. 
 \]
\end{lem}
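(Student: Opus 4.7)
The plan is to reduce the left-hand side to a one-line integration-by-parts calculation by first solving an auxiliary ODE explicitly. I introduce the auxiliary function
\[
w := \left(-\frac{d^{\,2}}{dx^2}\right)^{-1}\! \frac{dv}{dx},
\]
so that by definition $w$ is the unique element of $H_0^1(\Om)$ satisfying
\[
-w''(x) = v'(x) \text{ on } (0,1), \qquad w(0)=w(1)=0.
\]
The whole lemma amounts to computing $(v',w)$.

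First I would integrate the ODE once. Since $v\in H_0^1(\Om)$ we have $v(0)=0$, so integrating $-w''=v'$ on $(0,x)$ gives $-w'(x)+w'(0)=v(x)$, i.e.\ $w'(x)=C-v(x)$ with $C:=w'(0)$. A second integration, using $w(0)=0$, yields $w(x)=Cx-\int_0^x v(s)\,ds$, and imposing the remaining boundary condition $w(1)=0$ pins down $C=\overline{v}$. Hence
\[
w'(x)=\overline{v}-v(x).
\]
This step is the crux of the argument: the constant of integration that is forced by the second Dirichlet boundary condition is precisely what produces the average $\overline{v}$ on the right-hand side.

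Next I would plug this back into $(v',w)$. Integration by parts is justified because $v,w\in H_0^1(\Om)$, and the boundary terms vanish, so
\[
\left(v',\, w\right) = -\left(v,\, w'\right) = -\bigl(v,\, \overline{v}-v\bigr) = \|v\|_{L^2(\Om)}^2 - \overline{v}\int_\Om v\,dx = \|v\|_{L^2(\Om)}^2 - \overline{v}^{\,2},
\]
where the last equality uses $|\Om|=1$. Finally, expanding the square
\[
\|v-\overline{v}\|_{L^2(\Om)}^2 = \|v\|_{L^2(\Om)}^2 - 2\overline{v}\int_\Om v\,dx + \overline{v}^{\,2}\,|\Om| = \|v\|_{L^2(\Om)}^2 - \overline{v}^{\,2}
\]
identifies the two quantities, completing the argument.

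I do not anticipate any real obstacle: everything reduces to a one-dimensional ODE whose solution can be written down in closed form, followed by one integration by parts. The only item requiring a bit of care is correctly determining the integration constant $C$ from $w(1)=0$ rather than from $w(0)=0$ (the latter is already used to kill the other constant); that is exactly the step that brings in $\overline{v}$ and gives the stated formula.
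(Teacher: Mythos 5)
Your proposal is correct and follows essentially the same route as the paper's proof: introduce the auxiliary function solving $-w''=v'$ with homogeneous Dirichlet conditions, integrate once to find $w'=\overline{v}-v$ (the constant being fixed by the boundary conditions), and conclude by integration by parts. The only cosmetic difference is that you expand $\|v-\overline{v}\|_{L^2(\Om)}^2$ algebraically at the end, while the paper uses the orthogonality $(v,v-\overline{v})=(v-\overline{v},v-\overline{v})$; both are immediate.
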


\begin{proof}
By letting $\frac{d}{dx} v = w $ and $ (-\frac{d^{\,2}}{dx^2})^{-1} w = g$, we have
\beq \label{eqn:local_v}
\left\{ 
\begin{aligned}
 -\frac{d^2}{dx^2} g &= w = \frac{d}{dx} v , \quad  \tn{in } \Omega=(0,1) \, , \\
g(0) & =g(1) =0 \, , 
\end{aligned}
\right.
\eeq
where we have used the fact that $(-\frac{d^{\,2}}{dx^2})^{-1} $ is understood with homogeneous Dirichlet boundary condition. 
From \cref{eqn:local_v}, we have 
\[
-\frac{d}{dx} g =  v + C_0\,,
\]
where $C_0$ is a constant to be determined. From the boundary conditions in \cref{eqn:local_v}, we have 
\[
0= -\int_0^1 \frac{d}{dx} g  = \int_0^1 v + C_0, 
\]
which implies $C_0= -\overline{v}$. Therefore, we obtain
\[
\begin{split}
&\left(\frac{d}{dx} v, \left(-\frac{d^2}{dx^2}\right)^{-1}\frac{d}{dx} v  \right) =  - \left(v, \frac{d}{dx} \left(-\frac{d^2}{dx^2}\right)^{-1}\frac{d}{dx} v  \right) \\
&=   -\left(v,  \frac{d}{dx}  g \right)  = (v, v-\bar{v}) = (v -\bar{v}, v-\bar{v}) =\| v-\bar{v}\|_{L^2(\Om)}^2\,,
\end{split}
\]
where we have used  $\int_{\Om} (v-\overline{v}) =0$ in the last line. 
\end{proof}

Using the fact that 
\[
 \left(\cD_\del v, (-\cL_\del)^{-1}\cD_\del v \right)  \approx  \left(\frac{d}{dx} v, \left(-\frac{d^2}{dx^2}\right)^{-1}\frac{d}{dx} v  \right) = \| v -\overline{v} \|_{L^2(\Om)}^2 
\]
in one dimension, we arrive at an approximate optimal test norm $\| \cdot\|_{\tn{app},V}$ on $V$ given by 
\beq \label{eqn:appopt1}
\| v\|_{\tn{app},V} :=\left( \ep^2 \|  v \|^2_{\cS_\del(\Om)}+   \| v - \overline{v}\|^2_{L^2(\Om)} \right)^{1/2}\,. 
\eeq
A second choice of the test norm is simply the energy norm, i.e.,  
\beq \label{eqn:appopt2}
\| v\|_{\tn{eng},V} :=  \| v \|_{\cS_\del(\Om)} \, .
\eeq
In the rest of the paper, we conduct numerical experiments to study how the two test space norms, $\| \cdot\|_{\tn{app},V}$ and $\| \cdot\|_{\tn{eng},V}$ as defined in \cref{eqn:appopt1,eqn:appopt2}, perform in solving the nonlocal convection-dominated diffusion problem.

\subsection{Discretization}
The domain of interest $\Omega \cup \Omega_{\cI_\del} = (-\delta, 1+\del)$ is partitioned into non-overlapping elements, i.e.,
\[
K_i=(x_i, x_{i+1}), \quad 0 \leq i \leq N,  
\]
where 
\[
x_0 = -\del < x_1=0 < x_2 < \cdots < x_{i-1} < x_{i} < \cdots < x_N = 1 < x_{N+1} = 1 + \delta \, . 
\]
For $2\leq i \leq N-1$, $x_i$ is arbitrarily chosen. $K_0$ and $K_N $ are fixed because volumetric boundary conditions are imposed on these two elements, i.e., $K_0 \cup K_{N} = \Omega_{\cI_\del}$.  The initial mesh consists of five equally spaced elements in $\Omega$ as shown in \cref{fig:initialmesh}.

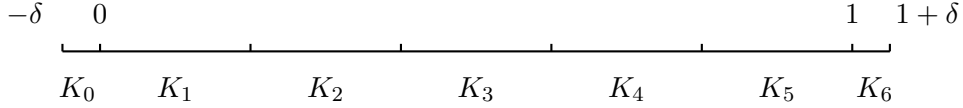
\begin{figure}
\centering
\begin{tikzpicture}
\tikzstyle{pt}=[circle, fill=black, inner sep=0pt, minimum size=4pt]
    \tikzstyle{co}=[cross out, draw=black, inner sep=0pt, outer sep=0pt, minimum size=5pt]
    %\draw[dashed, thick] plot coordinates {(0,0) (1.5, 0)} ;
    \draw [black, thick, fill=white!20] plot coordinates {(-0.5,0) (10.5,0)};
    %\draw[dashed, gray, <->] (0,-0.5)  -- (0.75, -0.5) ;
    %\draw[dashed, gray, <->] (0.75,-0.5)  -- (1.5,-0.5) ;
    %\draw[dashed, gray, <->] (1.5,-0.5)  -- (3, -0.5) ;
    %\draw[dashed, gray, <->] (3,-0.5)  -- (6,-0.5) ;
    %\draw[dashed, gray, <->] (0,-1.2)  -- (6,-1.2) ;
    \draw[black, thick] plot coordinates {(-0.5, 0.0) (-0.5, 0.1)} ;
    \draw[black, thick] plot coordinates {(0, 0.0) (0, 0.1)} ;
    \draw[black, thick] plot coordinates {(2, 0.0) (2, 0.1)} ;
    \draw[black, thick] plot coordinates {(4, 0.0) (4, 0.1)} ;
    \draw[black, thick] plot coordinates {(6, 0.0) (6, 0.1)} ;
    \draw[black, thick] plot coordinates {(8, 0.0) (8, 0.1)} ;
    \draw[black, thick] plot coordinates {(10, 0.0) (10, 0.1)} ;
    \draw[black, thick] plot coordinates {(10.5, 0.0) (10.5, 0.1)} ;
    %\draw[black, thick] plot coordinates {(0, -1.5) (0, -1.4)} ;
    %\draw[black, thick] plot coordinates {(8, -1.5) (8, -1.4)} ;
    %\draw[dashed, gray, very thin] plot coordinates {(2, 0.1) (2, -1.5)} ;
    %\draw[dashed, gray, very thin] plot coordinates {(0, 0.1) (0, -1.3)} ;
    %\draw[dashed, gray, very thin] plot coordinates {(0.75, 0.1) (0.75, -0.6)} ;
    %\draw[dashed, gray, very thin] plot coordinates {(1.5, 0.1) (1.5, -0.6)} ;
    %\draw[dashed, gray, very thin] plot coordinates {(3, 0.1) (3, -0.6)} ;
    %\draw[dashed, gray, very thin] plot coordinates {(6, 0.1) (6, -1.3)} ;
    %\draw[gray, very thin] plot coordinates {(8.5, 0.7) (8.5, -0.7)} ;
    %\draw[gray, very thin] plot coordinates {(10, 0.7) (10, -0.7)} ;
    %\draw (7,0) node () {${U_h|_{R_1}}$};
    \draw (-1,0.5) node () {$-\delta$};
    \draw (0,0.5) node () {$0$};
    \draw (10,0.5) node () {$1$};
    \draw (11,0.5) node () {$1+\delta$};
    \draw (-0.3,-0.5) node () {$K_0$};
    \draw (1,-0.5) node () {$K_1$};
    \draw (3,-0.5) node () {$K_2$};
    \draw (5,-0.5) node () {$K_3$};
    \draw (7,-0.5) node () {$K_4$};
    \draw (9,-0.5) node () {$K_5$};
    \draw (10.3,-0.5) node () {$K_6$};
\end{tikzpicture}
\caption{Initial discretization}
\label{fig:initialmesh}
\end{figure}

The collection of elements $K_i$ is denoted as $\Omega_h = \cup_{i=0}^{N}K_i$. The finite dimensional trial space $U_h \subset U$ is set to be
\beq
\label{eqn:apptrialspace}
U_h = \{ u\in U : u|_K \in \mathcal{Q}_p(K), \forall K \in \Omega_h\},
\eeq
where $\mathcal{Q}_p(K)$ is the space of polynomials of degree $p\geq 1$. 
The finite dimensional test space $V_h \subset V$ is set with an enriched order $\tilde{p} = p + \del p > p$ and 
\beq \label{eqn:apptestspace}
V_h = \{ v\in V : v|_K \in \mathcal{Q}_{\tilde{p}}(K), \forall K \in \Omega_h\}. 
\eeq
Therein, $\del p \in \Z^+$ is the degree of enrichment in the test space compared with the polynomial order in the trial space. 
 Next, we solve the mixed problem \eqref{eqn:discretemixed} with $U_h$ and $V_h$ defined by \cref{eqn:apptrialspace,eqn:apptestspace}. 
 %From numerous numerical experiments, it is found that $\del p = 2$ is sufficient to obtain good results, and throughout the rest of the paper we let $\del p =2$ unless specified.  
 
 \subsection{Nonlocal kernels}
%Variational methods for nonlocal models suffer from costly calculation of geometric intersection between mesh and horizon \cite{d2020cookbook}, especially in high dimension. 
For simplicity,  we choose the following nonlocal kernels 
\beq \label{eqn:constkernel-scaled}
\gamma^{\textnormal{diff}}\left(|s|\right) = 
\begin{cases}
\displaystyle \frac{3}{2}, \quad &|s| \leq 1 \,, \\
0, \quad &|s| > 1 \, ,
\end{cases}
\tn{ and }
\gamma^{\textnormal{conv}}\left(|s|\right)=
\begin{cases}
\displaystyle \frac{3}{2}|s|, \quad &|s| \leq 1 \,, \\
0, \quad &|s| > 1 \, .
\end{cases}
\eeq 
The parameters are chosen such that \cref{eqn:BoundedSecond,eqn:BoundedFirst} are satisfied. In this case, $\eta=1$ since we let $\cH_\del(x)=B_\del(x)$. As a result, the nonlocal diffusion and convection kernels are given as

\beq \label{eqn:constkernel}
\gamma^{\textnormal{diff}}_{\delta}\left(|s|\right) = 
\begin{cases}
\displaystyle \frac{3}{2\delta^{3}}, \quad &|s| \leq \delta \,, \\
0, \quad &|s| > \delta \, ,
\end{cases}
\tn{ and }
\gamma^{\textnormal{conv}}_{\delta}\left(|s|\right)=
\begin{cases}
\displaystyle  \frac{3}{2\delta^{3}} |s|, \quad & |s| \leq \delta \,, \\
0, \quad & |s| > \delta \, .
\end{cases}
\eeq 
It is immediate that \cref{eqn:kernel} is also satisfied. Since the nonlocal kernels are discontinuous over $\mathbb{R}$, the numerical integration process involves determining the intersection of elements in $\Om_h$ with balls of radius $\del$. The integration procedure is briefly discussed in Algorithm \ref{alg:integration}.
We remark that the intersection of an element and  horizon in one dimension, $\Om_j\cap B_\del(x_p)$, is an interval which is not hard to find. The intersecting geometry becomes more complicated in higher dimensions and related discussions on numerical integration for finite element implementations of nonlocal models can be found in \cite{chen2011continuous,d2020cookbook}. To reduce the integration error, we have used $p\, (\tilde{p}) + N_{over}$ quadrature points. Therein, $p \, (\tilde{p})$ is the polynomial order of the trial (test) element, and $N_{over} = 13$ is the number of extra quadrature points.
%\tcr{[Instead of citing \cite{Pasetto2018}, maybe it's more appropriate to just say the number of Gauss points we use?]}
%it is necessary to use high-order Gauss quadrature rules to minimize the integration error \cite{Pasetto2018}.  

\begin{algorithm}
\caption{Numerical integration}  \label{alg:integration}
For each $K_i \in \Omega_h$, \\
{\setlength{\parindent}{10pt}  Find $K_j \in \Omega_h$, such that dist$(\Omega_i, \Omega_j) \leq \delta$.}  \\
{\setlength{\parindent}{20pt} 
Integrate over $\Omega_i$ with Gauss quadrature points $x_p \in \Omega_i$. \\
} 
{\setlength{\parindent}{30pt} 
Integrate over $\Omega_j \cap B_{\delta}(x_p)$ with Gauss quadrature points $y_q \in \Omega_j \cap B_{\delta}(x_p)$. \\
} 
%Set $\bm{u}_{s,h}^{n} = \bm{u}_{s,h}^{n,l}$, $p_h^{n} = p_h^{n,l}$. \\
%Increment $t^n \to t^{n+1}$ .
\end{algorithm}

\subsection{Manufactured smooth solution} \label{subsec:egnonlocal}

In this section, we apply the proposed PG method to solve the nonlocal convection-dominated diffusion problem. We use the manufactured smooth solution $u(x) = x^5 $ to test the performance of the numerical method.   For the rest of this work, the model parameter $\epsilon $ is chosen to be $0.01$ to characterize the dominance of the convection over diffusion. %\tcr{The choice of this $\epsilon$ will be explained at the end of this section.} 
We study the convergence of the numerical solution to the nonlocal limit ($\delta$ fixed) as the mesh size $h \to 0$ and the polynomial order $p \to \infty$ in \Cref{subsec:nonlocallimit}. 
In addition, we also study the behaviour of the numerical solution with respect to the local limit as $\delta$ and $h$ both approach to zero in \Cref{subsec:locallimit} to test the asymptotic compatibility of the numerical algorithm \cite{Tian2014a,TiDu20}.

\subsubsection{Nonlocal limit ($\delta$ fixed)} \label{subsec:nonlocallimit}
We use the PG method with the two test space norms as presented in \cref{eqn:appopt1,eqn:appopt2} to solve the following nonlocal convection-dominated diffusion problem with fixed $\delta$, 
\begin{equation}  \label{eqn:smoothsolution}
\begin{cases}
-\epsilon  \mathcal{L}_{\delta} u(x) + \mathcal{G}_{\delta} u(x) = f_\del(x), & x \in \Omega,  \\ %(0,1), \\
u(x) = x^5, & x \in \Omega_{\cI_\del}. %[-\delta, 0] \cup [1, 1+\del] .
\end{cases}
\end{equation}
Given $u(x) = x^5 $, we calculate $f_\del = -\epsilon \mathcal{L}_\del u + \mathcal{G}_\del u$ and obtain
\beq
%f_\del(x) = - \epsilon (20x^3 + 0.5 \delta^2x)+ 5x^4  + 0.5 \delta^2x + 1/240\delta^4 .
f_\del(x) = - \epsilon (20x^3 + 6 \delta^2x)+ 5x^4  + 6 \delta^2x^2 + 3/7\delta^4 .
\eeq
We conduct convergence analysis using uniform $h$- and $p$-, and adaptive $h$-refinements for different horizon sizes, $\delta = 0.1, 0.01, 0.001$ and $0.0001$. We remark that the boundary conditions are imposed using the corresponding exact values of $u(x)$.

In the uniform $h$-refinements scheme, we fix the polynomial order of the trial spaces ($p=1$) and the degree of enrichment in the test space ($\delta p=2$) and refine the seven-element initial mesh, \cref{fig:initialmesh}, uniformly. Relative errors in $\| \cdot \|_{\mathcal{S}_\del}$ norm and convergence rates are presented in \cref{tbl:happopt1,tbl:happopt2} for the two test space norms, \cref{eqn:appopt1,eqn:appopt2}, respectively.
The results are similar for both norms. When $\delta=0.1$ is large, second-order convergence rates in the $\| \cdot \|_{\mathcal{S}_\del}$ norm are observed. As $\delta$ gets smaller ($\delta = 0.0001$), the convergence rates approach to the first order. This agrees with the properties of the energy space $\mathcal{S}_\del$. It has been shown that for integrable kernels, if $\delta$ is fixed, $\| \cdot\|_{\mathcal{S}_\del}$ is equivalent to $\|\cdot \|_{L^2}$ \cite{Du2012a}; while as $\delta \to 0$, $\| \cdot\|_{\mathcal{S}_\del}$ converges to $\|\cdot \|_{H^1_0}$ \cite{bourgain2001another}.

\begin{table}[ht!] 
\begin{center}
\caption{Relative error in $\| \cdot \|_{\mathcal{S_\delta}}$ and convergence rates using $\| \cdot \|_{\textnormal{app},V}$ for the test norm to solve \cref{eqn:smoothsolution}.  Uniform $h$-refinements and $\delta p=2$. } \label{tbl:happopt1}
\begin{tabular}{ccccc}  
\toprule
$0.1 \times h$ & {$\delta =0.1$} & {$\delta =0.01$} & {$\delta =0.001$} & {$\delta=0.0001$}   \\\midrule
%$0.1 \times h$ & {$\| \cdot \|_{\mathcal{S_\delta}}$ (Rate)} & {$\delta =0.01$} & {$\delta =0.001$}  \\\midrule
{$2^{1}$}  & {$2.03 \times 10^{-1}(--)$}  & {$2.48 \times 10^{-1}(--)$} & {$2.56 \times 10^{-1}(--)$}  & {$2.56 \times 10^{-1}(--)$} \\ 
{$2^{0}$}  & {$6.90 \times 10^{-2}(1.33)$}  & {$1.21 \times 10^{-1}(0.88)$} & {$1.29 \times 10^{-1}(0.84)$}  & {$1.30 \times 10^{-2}(0.84)$} \\ 
{$2^{-1}$}  & {$1.39 \times 10^{-3}(2.14)$}  & {$5.62 \times 10^{-2}(1.03)$} & {$6.47 \times 10^{-2}(0.92)$} & {$6.54 \times 10^{-2}(0.92)$} \\ 
{$2^{-2}$}  & {$3.09 \times 10^{-3}(2.09)$}  & {$2.33 \times 10^{-2}(1.22)$} & {$3.17 \times 10^{-2}(0.99)$} & {$3.27 \times 10^{-2}(0.96)$} \\ 
{$2^{-3}$}  & {$7.36 \times 10^{-4}(2.03)$}  & {$7.16 \times 10^{-3}(1.67)$} & {$1.54 \times 10^{-2}(1.03)$} & {$1.62 \times 10^{-3}(1.00)$}\\ 
{$2^{-4}$}  & {$1.80 \times 10^{-4}(2.01)$}  & {$1.69 \times 10^{-3}(2.07)$} & {$7.21 \times 10^{-3}(1.08)$} & {$8.08 \times 10^{-3}(0.99)$}\\ 
{$2^{-5}$}  & {$4.46 \times 10^{-5}(2.01)$}  & {$4.02 \times 10^{-4}(2.06)$} & {$3.12 \times 10^{-3}(1.20)$} & {$3.99 \times 10^{-3}(1.02)$} \\ 
{$2^{-6}$}  & {$1.11 \times 10^{-5}(2.00)$}  & {$1.02 \times 10^{-4}(1.98)$} & {$1.09 \times 10^{-4}(1.51)$} & {$1.95 \times 10^{-3}(1.03)$} \\ 
{$2^{-7}$}  & {$2.77 \times 10^{-6}(2.00)$}  & {$2.57 \times 10^{-5}(1.98)$} & {$2.39 \times 10^{-4}(2.19)$} & {$9.24 \times 10^{-4}(1.07)$} \\  \bottomrule
\end{tabular}
\end{center}
\end{table}

\begin{table}[ht!] 
\begin{center}
\caption{Relative error in $\| \cdot \|_{\mathcal{S_\delta}}$ and convergence rates using $\| \cdot \|_{\textnormal{eng},V}$ for the test norm to solve \cref{eqn:smoothsolution}. Uniform $h$-refinements and $\delta p=2$. } \label{tbl:happopt2}
\begin{tabular}{ccccc} \toprule
$0.1 \times h$ & {$\delta =0.1$} & {$\delta =0.01$} & {$\delta =0.001$} & {$\delta=0.0001$} \\\midrule
{$2^{1}$}  & {$2.01 \times 10^{-1}(--)$}  & {$2.58 \times 10^{-1}(--)$} & {$2.65 \times 10^{-1}(--)$}  & {$2.66 \times 10^{-1}(--)$} \\ 
{$2^{0}$}  & {$5.89 \times 10^{-2}(1.51)$}  & {$1.24 \times 10^{-1}(0.90)$} & {$1.31 \times 10^{-1}(0.86)$}  & {$1.33 \times 10^{-1}(0.85)$} \\ 
{$2^{-1}$}  & {$1.32 \times 10^{-2}(2.00)$}  & {$5.68 \times 10^{-2}(1.04)$} & {$6.53 \times 10^{-2}(0.94)$} & {$6.60 \times 10^{-2}(0.94)$} \\ 
{$2^{-2}$}  & {$3.05 \times 10^{-3}(2.04)$}  & {$2.34 \times 10^{-2}(1.23)$} & {$3.18 \times 10^{-2}(1.00)$} & {$3.28 \times 10^{-2}(0.97)$} \\ 
{$2^{-3}$}  & {$7.33 \times 10^{-4}(2.02)$}  & {$7.16 \times 10^{-3}(1.68)$} & {$1.54 \times 10^{-2}(1.03)$} & {$1.62 \times 10^{-2}(1.00)$}\\ 
{$2^{-4}$}  & {$1.80 \times 10^{-4}(2.01)$}  & {$1.69 \times 10^{-3}(2.07)$} & {$7.22 \times 10^{-3}(1.08)$} & {$8.09 \times 10^{-3}(0.99)$}\\ 
{$2^{-5}$}  & {$4.46 \times 10^{-5}(2.00)$}  & {$4.01 \times 10^{-4}(2.06)$} & {$3.12 \times 10^{-3}(1.20)$} & {$3.99 \times 10^{-3}(1.02)$} \\ 
{$2^{-6}$}  & {$1.11 \times 10^{-5}(2.00)$}  & {$1.02 \times 10^{-4}(1.98)$} & {$1.09 \times 10^{-3}(1.51)$} & {$1.95 \times 10^{-3}(1.03)$} \\ 
{$2^{-7}$}  & {$2.77 \times 10^{-6}(2.00)$}  & {$2.57 \times 10^{-5}(1.98)$} & {$2.39 \times 10^{-4}(2.19)$} & {$9.24 \times 10^{-4}(1.07)$} \\  \bottomrule
\end{tabular}
\end{center}
\end{table}

Next, we study the convergence of the uniform $p$-refinements by increasing the polynomial order of the trial space and keeping the mesh fixed. It is worth noting that the enrichment degree $\delta p$ in the test space is fixed while we increase $p$ of the trial space, then the polynomial order $\tilde{p}$ of $V_h$ in \cref{eqn:apptestspace} is augmented accordingly ($\tilde{p} = p + \delta p$). Convergence results (against the number of degrees of freedom, $N$) obtained using $\| \cdot \|_{\textnormal{app}, V}$ for the test norm are shown in \cref{tbl:2pappopt1} for $\delta p =2$ and in \cref{tbl:3pappopt1} for $\delta p = 3$. 
\Cref{tbl:2pappopt2,tbl:3pappopt2} present results obtained using $\| \cdot \|_{\tn{eng},V}$ for the test norm with $\delta p = 2$ and 3, respectively. Exponential convergence rates are observed for all cases and the results hardly change by increasing the enrichment degree $\delta p $ from 2 to 3.

\begin{table}[ht!]
\begin{center}
\caption{Relative error in $\| \cdot \|_{\mathcal{S_\delta}}$ and convergence rates using $\| \cdot \|_{\textnormal{app},V}$ for the test norm to solve \cref{eqn:smoothsolution}. Uniform $p$-refinements and $\delta p = 2$. } \label{tbl:2pappopt1}
\begin{tabular}{ccccc} 
\toprule
$N$ & {$\delta =0.1$} & {$\delta =0.01$} & {$\delta =0.001$}  & {$\delta =0.0001$} \\\midrule
{$4$}  & {$2.03 \times 10^{-1}(--)$}  & {$2.48 \times 10^{-1}(--)$} & {$2.65 \times 10^{-1}(--)$}  & {$2.65 \times 10^{-1}(--)$} \\ 
{$9$}  & {$2.04 \times 10^{-2}(2.83)$}  & {$2.32 \times 10^{-2}(2.92)$} & {$2.50 \times 10^{-2}(2.91)$} & {$2.50 \times 10^{-2}(2.91)$} \\ 
{$14$}  & {$4.55 \times 10^{-4}(8.61)$}  & {$9.66 \times 10^{-4}(7.20)$} & {$1.41 \times 10^{-3}(6.51)$} & {$1.41 \times 10^{-3}(6.51)$}\\ 
{$19$}  & {$9.52 \times 10^{-6}(12.66)$}  & {$2.23 \times 10^{-5}(12.34)$} & {$2.29 \times 10^{-5}(13.49)$} & {$2.29 \times 10^{-5}(13.49)$} \\ \bottomrule
\end{tabular}
\end{center}
\end{table}

\begin{table}[ht!]
\begin{center}
\caption{Relative error in $\| \cdot \|_{\mathcal{S_\delta}}$ and convergence rates using $\| \cdot \|_{\textnormal{app},V}$ for the test norm to solve \cref{eqn:smoothsolution}. Uniform $p$-refinements and $\delta p = 3$. } \label{tbl:3pappopt1}
\begin{tabular}{ccccc} 
\toprule
$N$ & {$\delta =0.1$} & {$\delta =0.01$} & {$\delta =0.001$} & {$\delta =0.0001$}  \\\midrule
{$4$}  & {$2.10 \times 10^{-1}(--)$}  & {$2.48 \times 10^{-1}(--)$} & {$2.62 \times 10^{-1}(--)$} & {$2.62 \times 10^{-1}(--)$}\\ 
{$9$}  & {$2.08 \times 10^{-2}(2.85)$}  & {$2.32 \times 10^{-2}(2.92)$} & {$2.48 \times 10^{-2}(2.91)$} & {$2.48 \times 10^{-2}(2.90)$} \\ 
{$14$}  & {$4.58 \times 10^{-4}(8.63)$}  & {$9.64 \times 10^{-4}(7.20)$} & {$1.40 \times 10^{-4}(6.51)$} & {$1.40 \times 10^{-3}(6.51)$} \\ 
{$19$}  & {$9.53 \times 10^{-6}(12.68)$}  & {$2.23 \times 10^{-5}(12.34)$} & {$2.29 \times 10^{-5}(13.47)$} & {$2.29 \times 10^{-5}(13.47)$} \\ \bottomrule
\end{tabular}
\end{center}
\end{table}

\begin{table}[ht!]
\begin{center}
\caption{Relative error in $\| \cdot \|_{\mathcal{S_\delta}}$ and convergence rates using $\| \cdot \|_{\textnormal{eng},V}$ for the test norm to solve \cref{eqn:smoothsolution}. Uniform $p$-refinements and $\delta p = 2$. } \label{tbl:2pappopt2}
\begin{tabular}{ccccc} 
\toprule
$N$ & {$\delta =0.1$} & {$\delta =0.01$} & {$\delta =0.001$} & {$\delta =0.0001$} \\\midrule
{$4$}  & {$2.01 \times 10^{-1}(--)$}  & {$2.58 \times 10^{-1}(--)$} & {$2.84 \times 10^{-1}(--)$}  & {$2.84 \times 10^{-1}(--)$}\\ 
{$9$}  & {$1.88 \times 10^{-2}(2.92)$}  & {$2.39 \times 10^{-2}(2.93)$} & {$2.58 \times 10^{-2}(2.96)$} & {$2.58 \times 10^{-2}(2.96)$}\\ 
{$14$}  & {$4.28 \times 10^{-4}(8.56)$}  & {$1.07 \times 10^{-3}(7.03)$} & {$1.52 \times 10^{-3}(6.41)$} & {$1.52 \times 10^{-3}(6.41)$} \\ 
{$19$}  & {$9.47 \times 10^{-6}(12.48)$}  & {$2.26 \times 10^{-5}(12.63)$} & {$2.33 \times 10^{-5}(13.69)$} & {$2.33 \times 10^{-5}(13.69)$}\\ \bottomrule
\end{tabular}
\end{center}
\end{table}

\begin{table}[ht!]
\begin{center}
\caption{Relative error in $\| \cdot \|_{\mathcal{S_\delta}}$ and convergence rates using  $\| \cdot \|_{\textnormal{eng},V}$ for the test norm to solve \cref{eqn:smoothsolution}. Uniform $p$-refinements and $\delta p = 3$.} \label{tbl:3pappopt2}
\begin{tabular}{ccccc} 
\toprule
$N$ & {$\delta =0.1$} & {$\delta =0.01$} & {$\delta =0.001$} & {$\delta =0.0001$} \\\midrule
{$4$}  & {$2.01 \times 10^{-1}(--)$}  & {$2.58 \times 10^{-1}(--)$} & {$2.84 \times 10^{-1}(--)$} & {$2.84 \times 10^{-1}(--)$} \\ 
{$9$}  & {$1.88 \times 10^{-2}(2.92)$}  & {$2.39 \times 10^{-2}(2.93)$} & {$2.58 \times 10^{-2}(2.96)$} & {$2.58 \times 10^{-2}(2.96)$} \\ 
{$14$}  & {$4.28 \times 10^{-4}(8.56)$}  & {$1.07 \times 10^{-3}(7.03)$} & {$1.52 \times 10^{-3}(6.41)$} & {$1.52 \times 10^{-3}(6.41)$} \\ 
{$19$}  & {$7.47 \times 10^{-6}(12.48)$}  & {$2.26 \times 10^{-5}(12.63)$} & {$2.33 \times 10^{-5}(13.69)$} & {$2.33 \times 10^{-5}(13.69)$} \\ \bottomrule
\end{tabular}
\end{center}
\end{table}

As discussed in \cite{cohen2012adaptivity,demkowicz2020double,demkowicz2017discontinuous}, $\psi_h$ defined in \cref{eqn:discretemixed} also serves as an a-posteriori error estimator for adaptivity in the trial space. We adopt the D\"{o}rfler refinement strategy \cite{dorfler1996convergent} with $10\%$ factor for the adaptive $h$-refinements. 
%We determine first the largest element contribution to the residual, and then mark for refinements all elements whose contributions exceed $10\%$ of the maximum element error (or total error ?)}. 
A total number of 50 refinement steps are conducted.  Convergence results using the adaptive $h$-refinements scheme are plotted in \cref{fig:h-adaptive} using the two test space norms, namely, $\| \cdot \|_{\tn{app}, V}$ and $\| \cdot \|_{\tn{eng}, V}$. 
Similar algebraic convergence behaviour as in the uniform $h$-refinement is observed.
When $\delta$ is large $(\delta=0.1)$, we obtain second-order convergence rates. For $\delta=0.0001$, the convergence rates become first-order. For the intermediate horizon size, $\delta = 0.01$, the convergence rates are between the first- and second- orders.  
%When we choose a smaller $\delta = 0.001$, the relative error decrease linearly at the first few refinement steps, but the error grows as more refinements are carried out. This is due to the fact that as $h$ gets smaller and the mesh is not uniform, integration error becomes dominant and eventually is out of control.

\begin{figure}[htb!]%htb
%\captionsetup[subfigure]{font=scriptsize, labelfont=scriptsize}
\begin{subfigure}[b]{0.5\textwidth} 
\centering
\scalebox{0.5}{\includegraphics{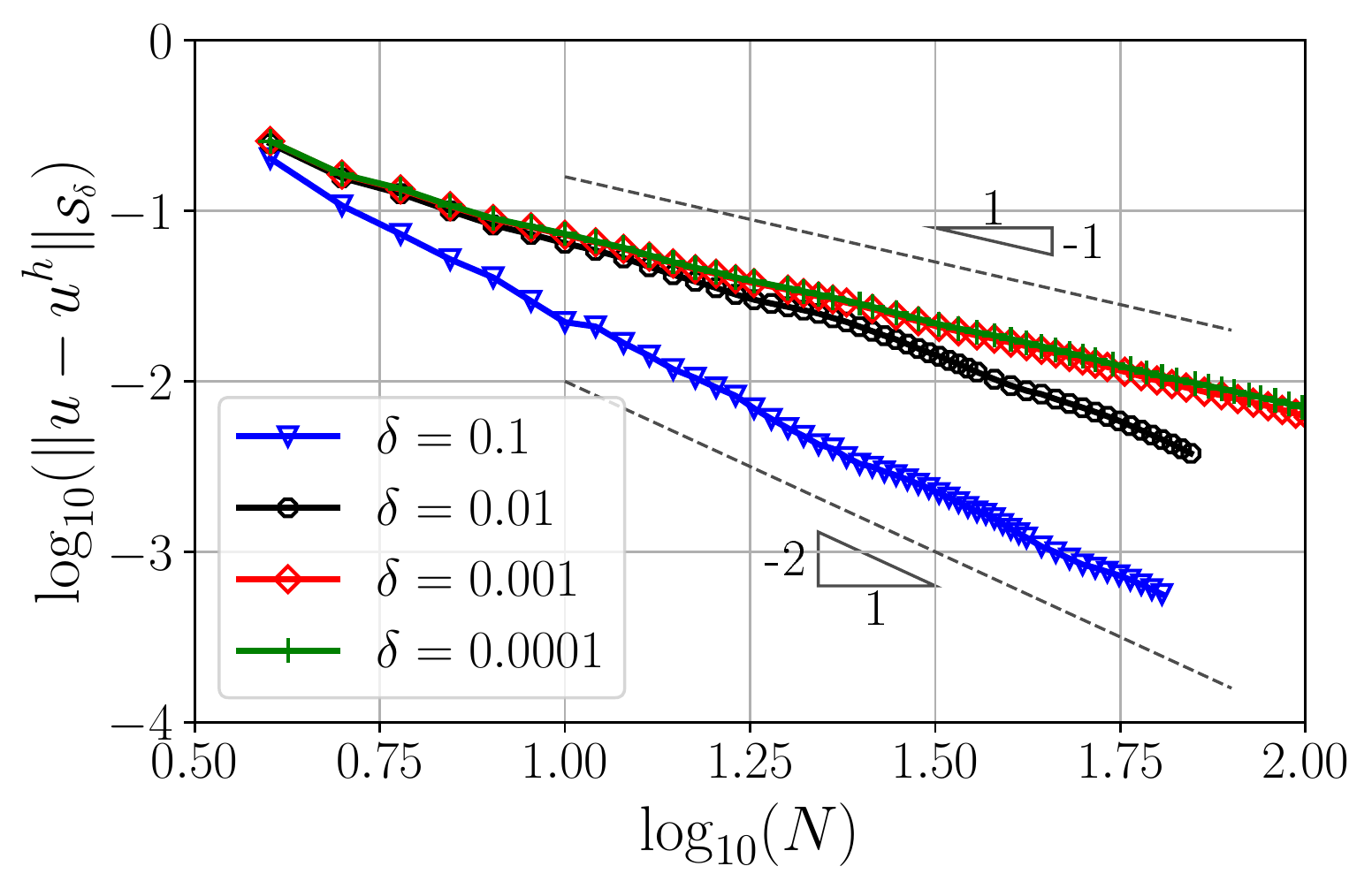}} 
\caption{$\| \cdot \|_{\tn{app}, V}$ for the test norm}
\label{fig:hadap1}
\end{subfigure}
\begin{subfigure}[b]{0.5\textwidth} 
\centering
\scalebox{0.5}{\includegraphics{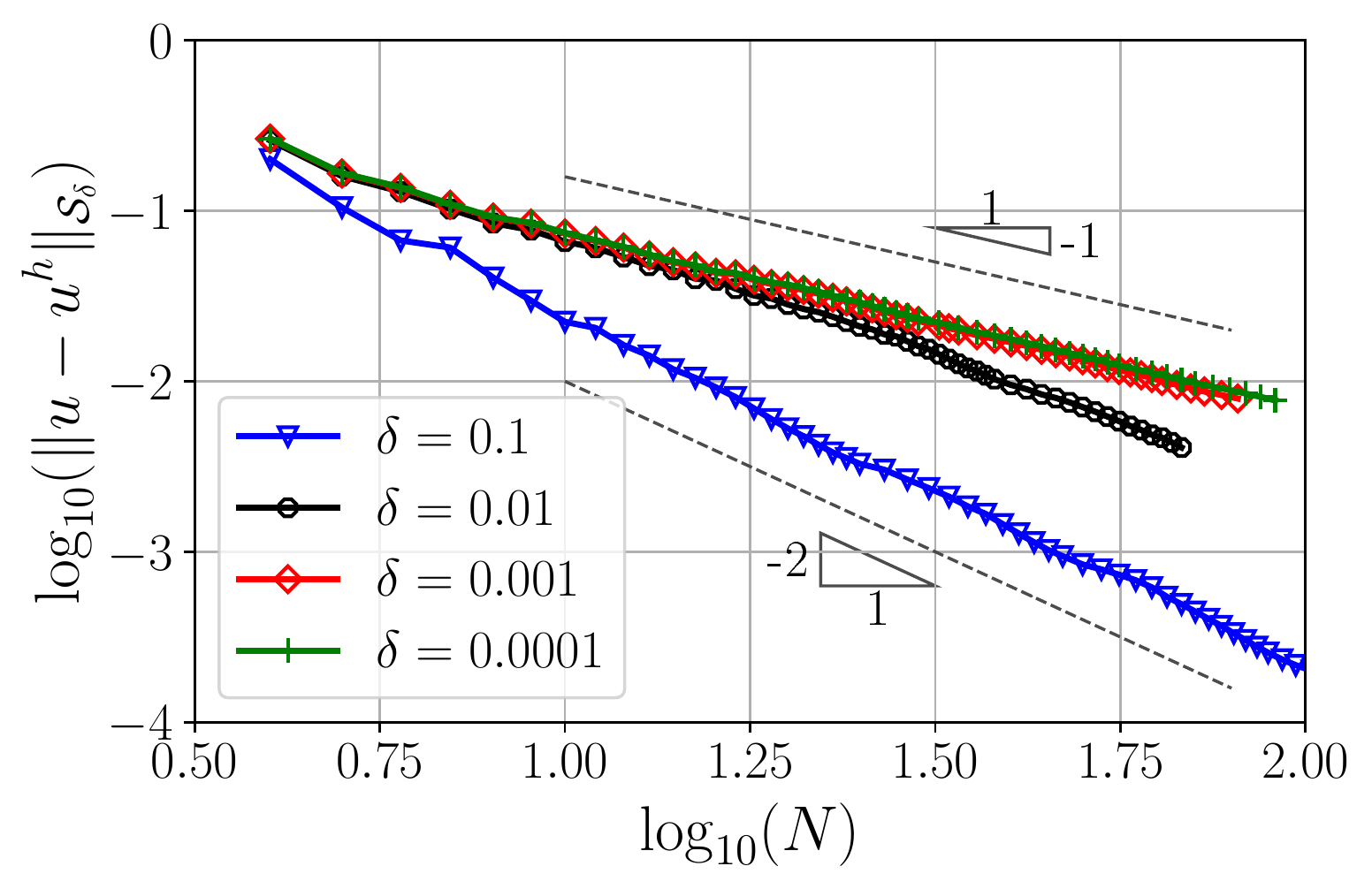}}
\caption{$\| \cdot \|_{\tn{eng}, V}$ for the test norm} 
\label{fig:hadap2}
\end{subfigure}
\caption{Convergence profile (relative error) using  $\| \cdot \|_{\tn{app}, V}$ and  $\| \cdot \|_{\tn{eng}, V}$ for the test norms to solve \cref{eqn:smoothsolution}. Adaptive $h$-refinements and $\del p=2$.} 
\label{fig:h-adaptive}
\end{figure}

\subsubsection{Local limit ($\delta \to 0$)} \label{subsec:locallimit}
In this section we study the convergence of the numerical solution to the local limit as $\delta$ and $h$ both go to 0.  The forcing function $f_0$ is obtained through
$f_0(x) = -\epsilon  u^{\prime\prime}(x) + u^\prime(x) = - 20\epsilon x^3 + 5x^4$, where $u(x) = x^5$. 
We solve the following nonlocal problem 
\begin{equation}  \label{eqn:localproblem}
\begin{cases}
-\epsilon  \mathcal{L}_{\delta} u(x) + \mathcal{G}_{\delta} u(x) = f_0(x), & x \in \Omega,  \\ 
u(x) = x^5, & x \in \Omega_{\cI_\del},
\end{cases}
\xrightarrow[]{\delta \to 0}
\begin{cases}
-\epsilon  u^{\prime\prime}(x)+u^\prime(x)  = f_0(x),  &x \in \Omega, \\
u(x) = x^5, & x \in \partial \Omega,
%u(1) = 0 .
\end{cases}
\end{equation}
by letting $\delta \to 0 $ and $h \to 0$ but at different coupling rates.
Convergence results using uniform $h$-refinements are reported in \cref{tbl:hlocalopt,tbl:hlocaleng} using $\| \cdot \|_{\textnormal{app},V}$ and $\| \cdot \|_{\textnormal{eng},V}$ for the test space norm, respectively.

As shown in \cref{tbl:hlocalopt,tbl:hlocaleng}, when $\delta$ and $h$ both approach to zero, first-order convergence rates are observed regardless of the coupling rate between $\delta$ and $h$. It is worth mentioning that for $\delta = 2h$, the convergence rate is second-order when $h \geq 2^{-3}$. This is due to the fact that the nonlocal energy norm $\| \cdot \|_{\cS_\del}$ transitions from  $L^2$ norm to ${H^1}$-semi norm as $\delta=2h$ decreases. As a consequence, the convergence rate is of first-order. For $\delta = \sqrt{h}$, only first-order convergence rate is obtained  because the nonlocal problem converges to the local problem at a rate of $\mathcal{O}(\delta^2)$, thus $\mathcal{O}(h)$. Lastly, similar convergence behavior is obtained for both norms. 

\begin{table}[ht!] 
\begin{center}
\caption{Relative error in $\| \cdot \|_{\mathcal{S_\delta}}$ and convergence rates using $\| \cdot \|_{\textnormal{app},V}$ for the test norm to solve \cref{eqn:localproblem}. Uniform $h$-refinements and $\del p=2$. } \label{tbl:hlocalopt}
\begin{tabular}{ccccc} \toprule
$0.1 \times h$ & {$\delta =h$} & {$\delta =2h$} & {$\delta =h^2$} & {$\delta=\sqrt{h}$} \\\midrule
{$2^{1}$}  & {$4.77 \times 10^{-1}(--)$}  & {$3.03 \times 10^{0}(--)$} & {$2.30 \times 10^{-1}(--)$}  & {$1.14 \times 10^{1}(--)$} \\ 
{$2^{0}$}  & {$1.34 \times 10^{-1}(1.56)$}  & {$5.59 \times 10^{-1}(2.08)$} & {$1.21 \times 10^{-1}(0.79)$}  & {$4.54 \times 10^{0}(1.14)$} \\ 
{$2^{-1}$}  & {$4.09 \times 10^{-2}(1.59)$}  & {$1.01 \times 10^{-1}(2.29)$} & {$6.28 \times 10^{-2}(0.88)$} & {$1.82 \times 10^{0}(1.22)$} \\ 
{$2^{-2}$}  & {$1.37 \times 10^{-2}(1.53)$}  & {$1.97 \times 10^{-2}(2.28)$} & {$3.22 \times 10^{-2}(0.93)$} & {$6.14 \times 10^{-1}(1.51)$} \\ 
{$2^{-3}$}  & {$5.67 \times 10^{-3}(1.25)$}  & {$4.80 \times 10^{-3}(2.00)$} & {$1.61 \times 10^{-2}(0.98)$} & {$2.34 \times 10^{-1}(1.36)$}\\ 
{$2^{-4}$}  & {$2.67 \times 10^{-3}(1.08)$}  & {$1.67 \times 10^{-3}(1.51)$} & {$8.16 \times 10^{-3}(0.97)$} & {$8.88 \times 10^{-2}(1.39)$}\\ 
{$2^{-5}$}  & {$1.31 \times 10^{-3}(1.02)$}  & {$7.13 \times 10^{-4}(1.22)$} & {$4.09 \times 10^{-3}(0.99)$} & {$3.43 \times 10^{-2}(1.36)$} \\ 
{$2^{-6}$}  & {$6.50 \times 10^{-4}(1.01)$}  & {$3.34 \times 10^{-4}(1.09)$} & {$2.04 \times 10^{-3}(1.00)$} & {$1.41 \times 10^{-2}(1.28)$} \\ 
{$2^{-7}$}  & {$3.24 \times 10^{-4}(1.00)$}  & {$1.63 \times 10^{-4}(1.03)$} & {$1.02 \times 10^{-4}(1.00)$} & {$8.04 \times 10^{-3}(0.81)$} \\  \bottomrule
\end{tabular}
\end{center}
\end{table}

\begin{table}[ht!] 
\begin{center}
\caption{Relative error in $\| \cdot \|_{\mathcal{S_\delta}}$ and convergence rates using $\| \cdot \|_{\textnormal{eng},V}$ for the test norm to solve \cref{eqn:localproblem}. Uniform $h$-refinements and $\del p=2$. } \label{tbl:hlocaleng}
\begin{tabular}{ccccc} \toprule
$0.1 \times h$ & {$\delta =h$} & {$\delta =2h$} & {$\delta =h^2$} & {$\delta=\sqrt{h}$} \\\midrule
{$2^{1}$}  & {$2.92 \times 10^{-1}(--)$}  & {$2.40 \times 10^{0}(--)$} & {$2.39 \times 10^{-1}(--)$}  & {$1.09 \times 10^{1}(--)$} \\ 
{$2^{0}$}  & {$9.06 \times 10^{-2}(1.44)$}  & {$4.59 \times 10^{-1}(2.04)$} & {$1.24 \times 10^{-1}(0.81)$}  & {$3.91 \times 10^{0}(1.26)$} \\ 
{$2^{-1}$}  & {$3.25 \times 10^{-2}(1.37)$}  & {$8.29 \times 10^{-1}(2.29)$} & {$6.35 \times 10^{-2}(0.90)$} & {$1.72 \times 10^{0}(1.10)$} \\ 
{$2^{-2}$}  & {$1.28 \times 10^{-2}(1.30)$}  & {$1.65 \times 10^{-2}(2.24)$} & {$3.23 \times 10^{-2}(0.94)$} & {$5.94 \times 10^{-1}(1.48)$} \\ 
{$2^{-3}$}  & {$5.63 \times 10^{-3}(1.16)$}  & {$4.47 \times 10^{-3}(1.85)$} & {$1.61 \times 10^{-2}(0.99)$} & {$2.28 \times 10^{-1}(1.35)$}\\ 
{$2^{-4}$}  & {$2.67 \times 10^{-3}(1.07)$}  & {$1.65 \times 10^{-3}(1.42)$} & {$8.17 \times 10^{-3}(0.97)$} & {$8.71 \times 10^{-2}(1.38)$}\\ 
{$2^{-5}$}  & {$1.31 \times 10^{-3}(1.02)$}  & {$7.14 \times 10^{-4}(1.20)$} & {$4.09 \times 10^{-3}(0.99)$} & {$3.39 \times 10^{-2}(1.36)$} \\ 
{$2^{-6}$}  & {$6.50 \times 10^{-4}(1.01)$}  & {$3.35 \times 10^{-4}(1.09)$} & {$2.04 \times 10^{-3}(1.00)$} & {$1.41 \times 10^{-2}(1.27)$} \\ 
{$2^{-7}$}  & {$3.24 \times 10^{-4}(1.00)$}  & {$1.64 \times 10^{-4}(1.03)$} & {$1.02 \times 10^{-4}(1.00)$} & {$8.03 \times 10^{-3}(0.80)$} \\  \bottomrule
\end{tabular}
\end{center}
\end{table}

In conclusion, optimal convergence rates in the energy norm to the nonlocal limit (fixed $\delta$) are observed using the proposed PG method under uniform $h$- and $p$-refinements, and adaptive $h$-refinements.
As $\delta$ and $h$ both go to zero at different coupling rates, first-order convergence rates in $\| \cdot \|_{\cS_\del}$ are observed. Therefore, the proposed PG method is asymptotically compatible \cite{Tian2014a,TiDu20}. 
Moreover, the convergence rates measured in the energy norm also reflect the properties of the norm $\| \cdot\|_{\cS_\del}$ with integrable kernels, i.e., it transitions from $\| \cdot\|_{L^2}$ to $\| \cdot\|_{H_0^1}$ as the size of $\del$ changes from large to small. We remark that convergence rates measured in $L^2$ norm are shown in \ref{sec:appendix} for reference.

\subsection{Manufactured solution with a sharp gradient transition} \label{subsec:egbl}
In this section, we show the effectiveness of the proposed PG method using the following manufactured solution,
\beq \label{eqn:sharpgradientsolution}
u(x) = \frac{e^{((x-1)/\epsilon)}-1}{e^{(-1/\epsilon)} -1 } ,
\eeq
and the corresponding forcing function is given by
\beq
f_\del(x) = \left[ \frac{3\epsilon}{2\delta^2} \left( 4 + e^{\delta /\epsilon } + e^{-\delta /\epsilon } \right) - \frac{9\epsilon^2}{2\delta^3} \left(  e^{\delta /\epsilon } - e^{-\delta /\epsilon } \right) \right] \frac{e^{((x-1)/\epsilon)}}{e^{(-1/\epsilon)}-1}.   
\eeq
The manufactured solution given by \cref{eqn:sharpgradientsolution} is plotted in \cref{fig:nonsmooth}. As shown in \cref{fig:nonsmooth}, $u(x)$ transitions from $1$ to $0$, and the width of the transition region depends on $\epsilon$. We remark that $\epsilon = 0.01$ in this work and the gradient of $u(x)$ changes rapidly near $x=1$.  Traditional numerical methods suffer from oscillations. Various stabilizing techniques are effective in eliminating such oscillations for classical convection-diffusion equations \cite{brooks1982streamline,brezzi1989two,burman2004edge,cohen2012adaptivity,demkowicz2014overview,guermond2011entropy,hughes1989new}.  We show next that the proposed PG method is stable and optimal convergence rates are recovered. More importantly, the superiority of using $\| \cdot \|_{\tn{app}, V}$ (\cref{eqn:appopt1}) other than $\| \cdot \|_{\tn{eng}, V}$ (\cref{eqn:appopt2}) as the test norm for numerical stability in the pre-asymptotic regime is demonstrated.

\begin{figure}[htb!]%htb
\captionsetup[subfigure]{font=scriptsize, labelfont=scriptsize}
\centering
\scalebox{0.5}{\includegraphics{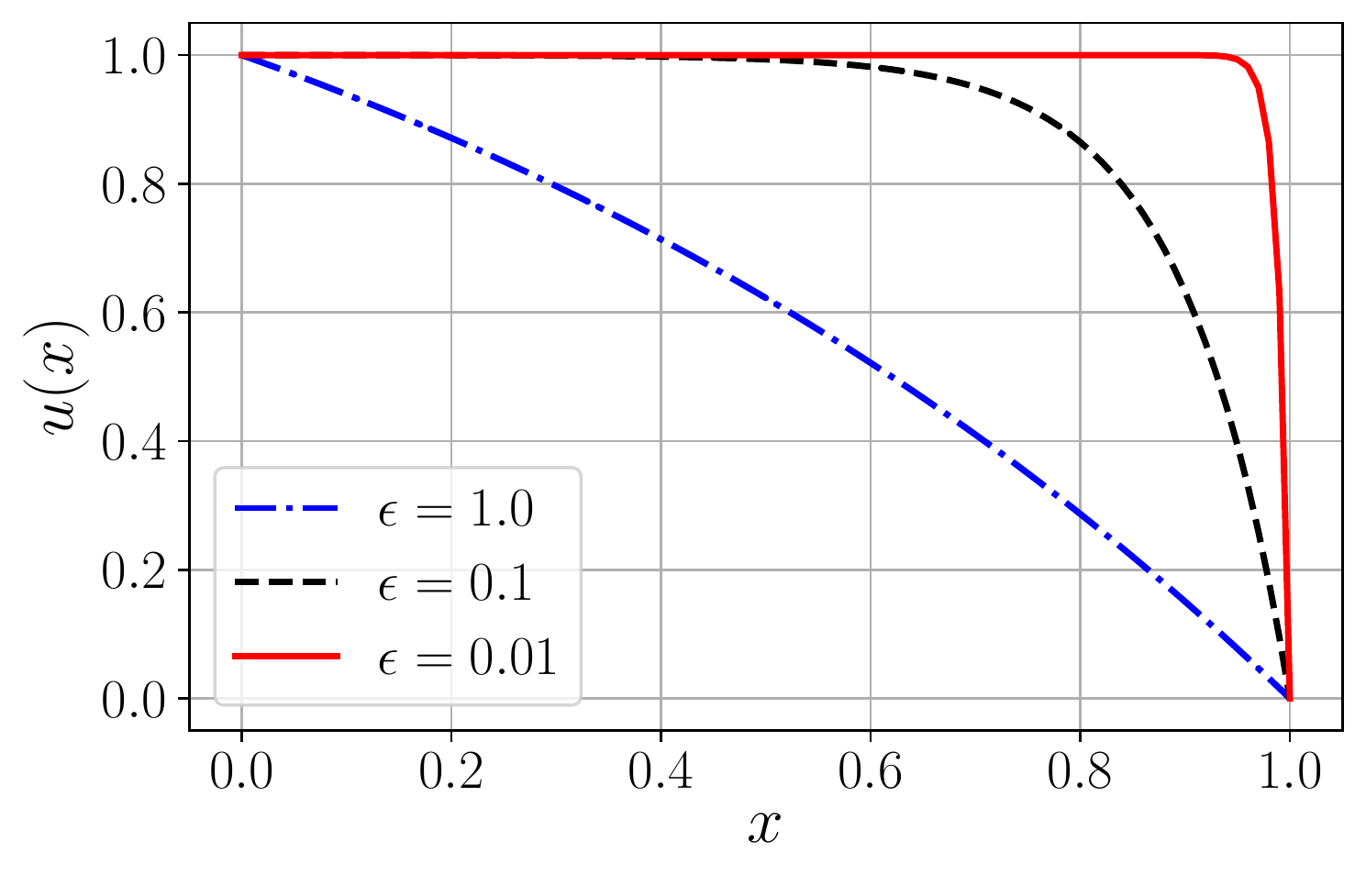}} 
\caption{Manufactured solution, \cref{eqn:sharpgradientsolution}, for different $\epsilon$} 
\label{fig:nonsmooth}
\end{figure}

We use the proposed PG method to solve the nonlocal convection-dominated diffusion problem with the manufactured solution given in \cref{eqn:sharpgradientsolution} when $\delta$ is fixed. The initial mesh is shown in \cref{fig:initialmesh}. Linear elements ($p=1$) are used for the trial space $U_h$ and the order of enrichment in the test space is $\delta p=6$. Convergence results using uniform $h$-refinements agree with \cref{subsec:egnonlocal} and are presented in \cref{fig:h-refinement2}. It is shown in \cref{fig:hrefineopt2,fig:hrefineeng2}, optimal convergence rates are recovered only after the mesh size is small enough ($h \leq 2^{-3}$), and the performance of the two test space norms are similar. When $\delta$ is large ($\delta =0.01$), second-order convergence rates are observed. The convergence rate is only first-order for small $\delta =0.00001$. 

%\tcr{[I believe in Figs. 4 and 5 (and the corresponding figures in the appendix), we have $\del p=2$. However, as demonstrated in Figs. 6 and 7 we have to choose $\del p=6$ for the approximate optimal test norm to be effective. Therefore, I think we should keep $\del p =6$ throughout this section.  It is fine to use a lower order in Section 4.4 because the results would be the same. Since here with larger $\del p$ the results are different, I think we should keep a larger  $\del p$. \\ 
%Moreover, try to add slopes in these figures (and the corresponding figures in the appendix) as the ones in Fig. 2.  ]}

\begin{figure}[htb!]%htb
%\captionsetup[subfigure]{font=scriptsize, labelfont=scriptsize}
\begin{subfigure}[b]{0.5\textwidth} 
\centering
\scalebox{0.5}{\includegraphics{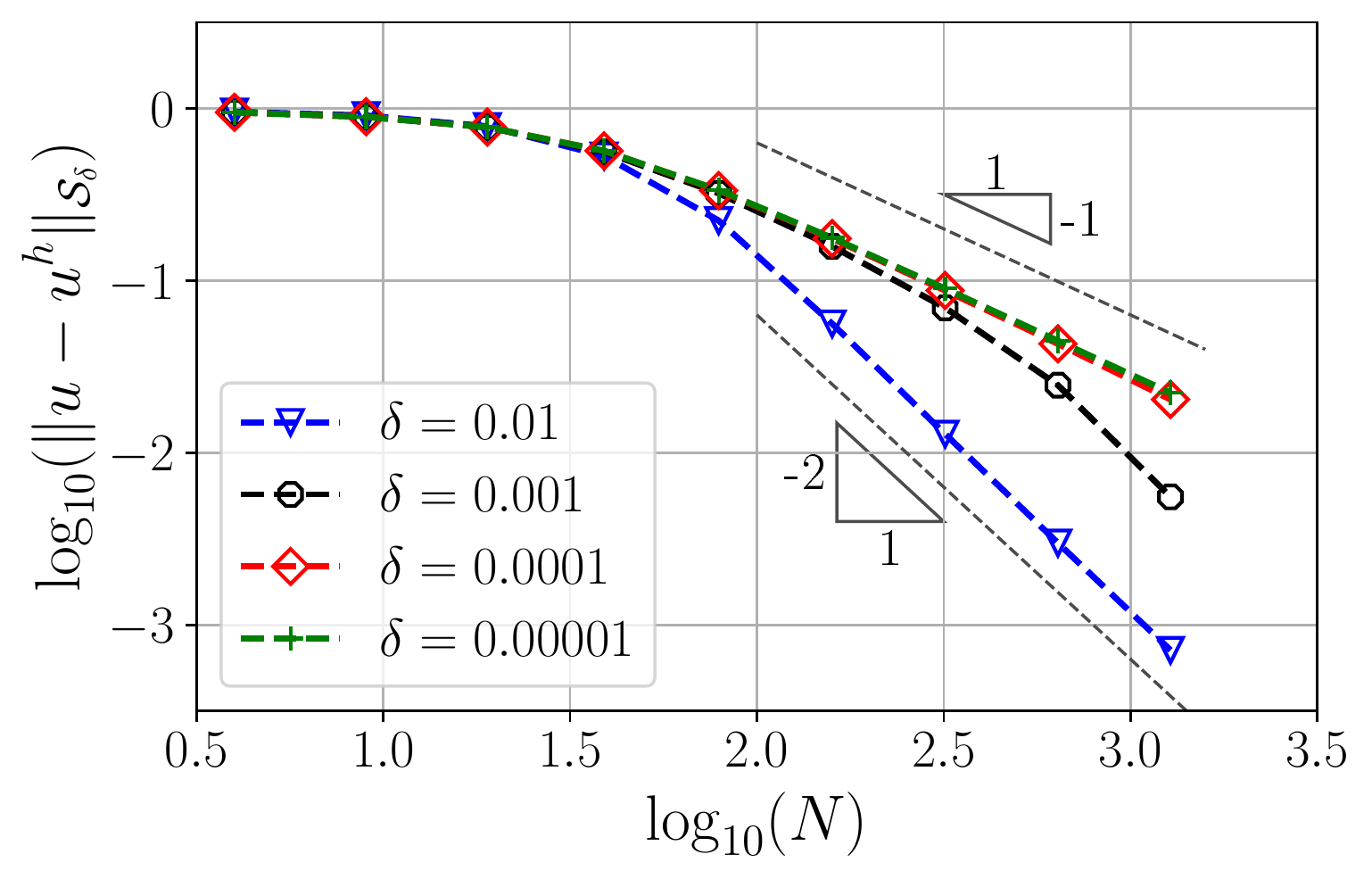}} 
\caption{$\| \cdot \|_{\tn{app}, V}$ for the test norm} 
\label{fig:hrefineopt2}
\end{subfigure}
\begin{subfigure}[b]{0.5\textwidth} 
\centering
\scalebox{0.5}{\includegraphics{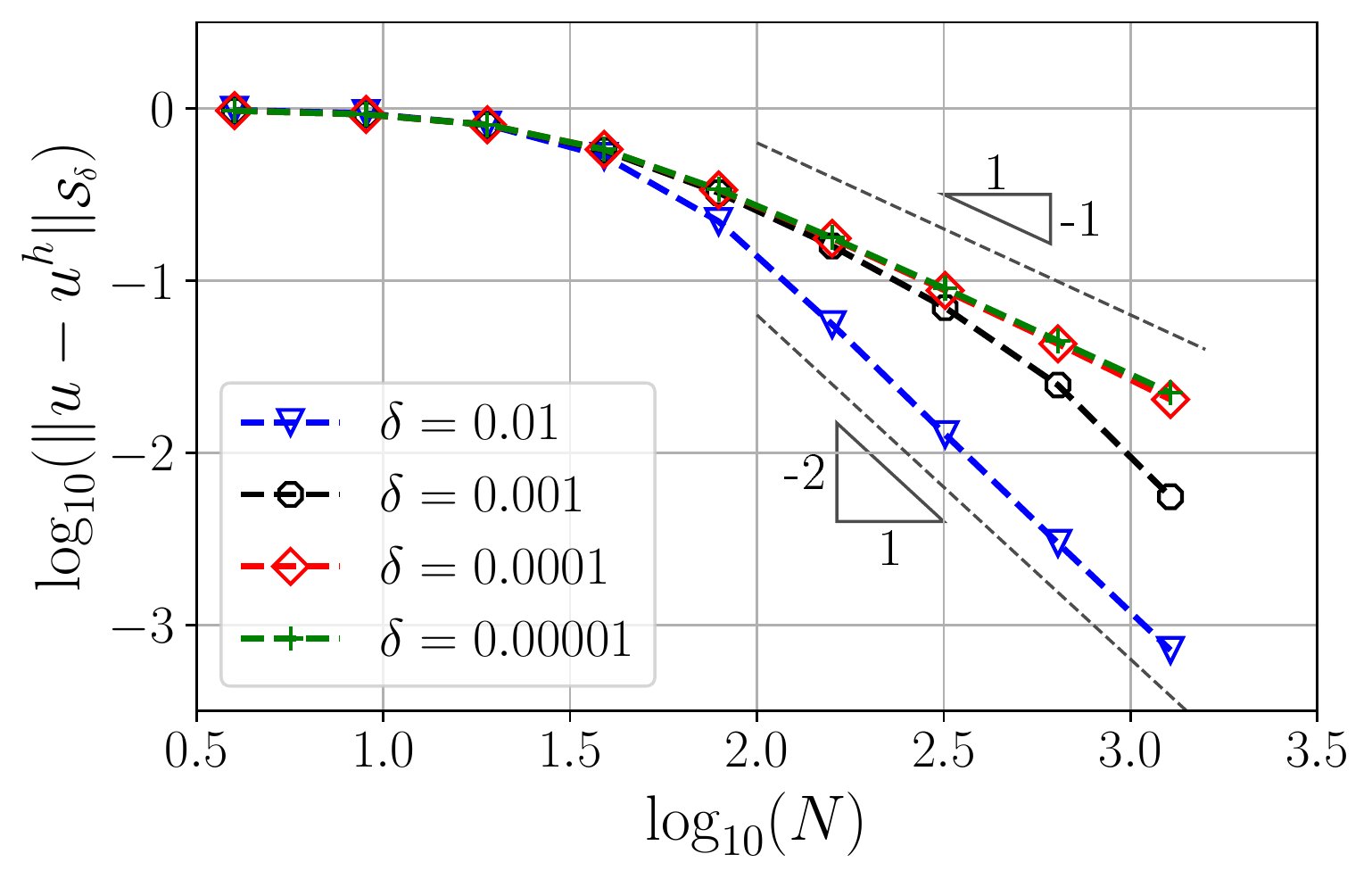}}
\caption{$\| \cdot \|_{\tn{eng}, V}$ for the test norm} 
\label{fig:hrefineeng2}
\end{subfigure}
\caption{Convergence profile (relative error) using  $\| \cdot \|_{\tn{app}, V}$ and  $\| \cdot \|_{\tn{eng}, V}$ for the test norms to solve the nonlocal convection-dominated diffusion problem with the manufactured solution given in \cref{eqn:sharpgradientsolution}.  Uniform $h$-refinements and $\delta p=6$.} 
\label{fig:h-refinement2}
\end{figure}

\begin{figure}[htb!]%htb
%\captionsetup[subfigure]{font=scriptsize, labelfont=scriptsize}
\begin{subfigure}[b]{0.5\textwidth} 
\centering
\scalebox{0.5}{\includegraphics{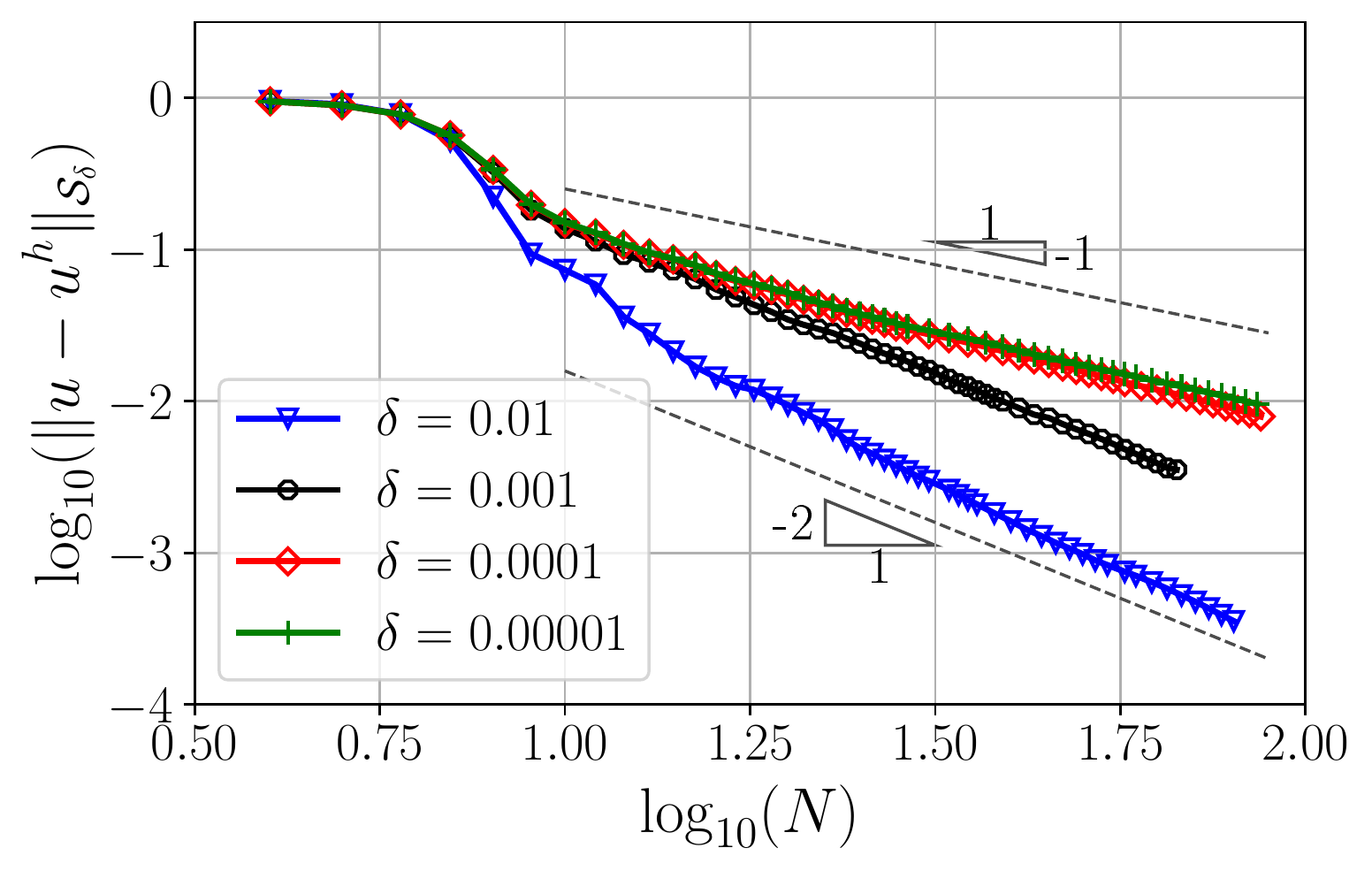}} 
\caption{$\| \cdot \|_{\tn{app}, V}$ for the test norm} 
\label{fig:opt2}
\end{subfigure}
\begin{subfigure}[b]{0.5\textwidth} 
\centering
\scalebox{0.5}{\includegraphics{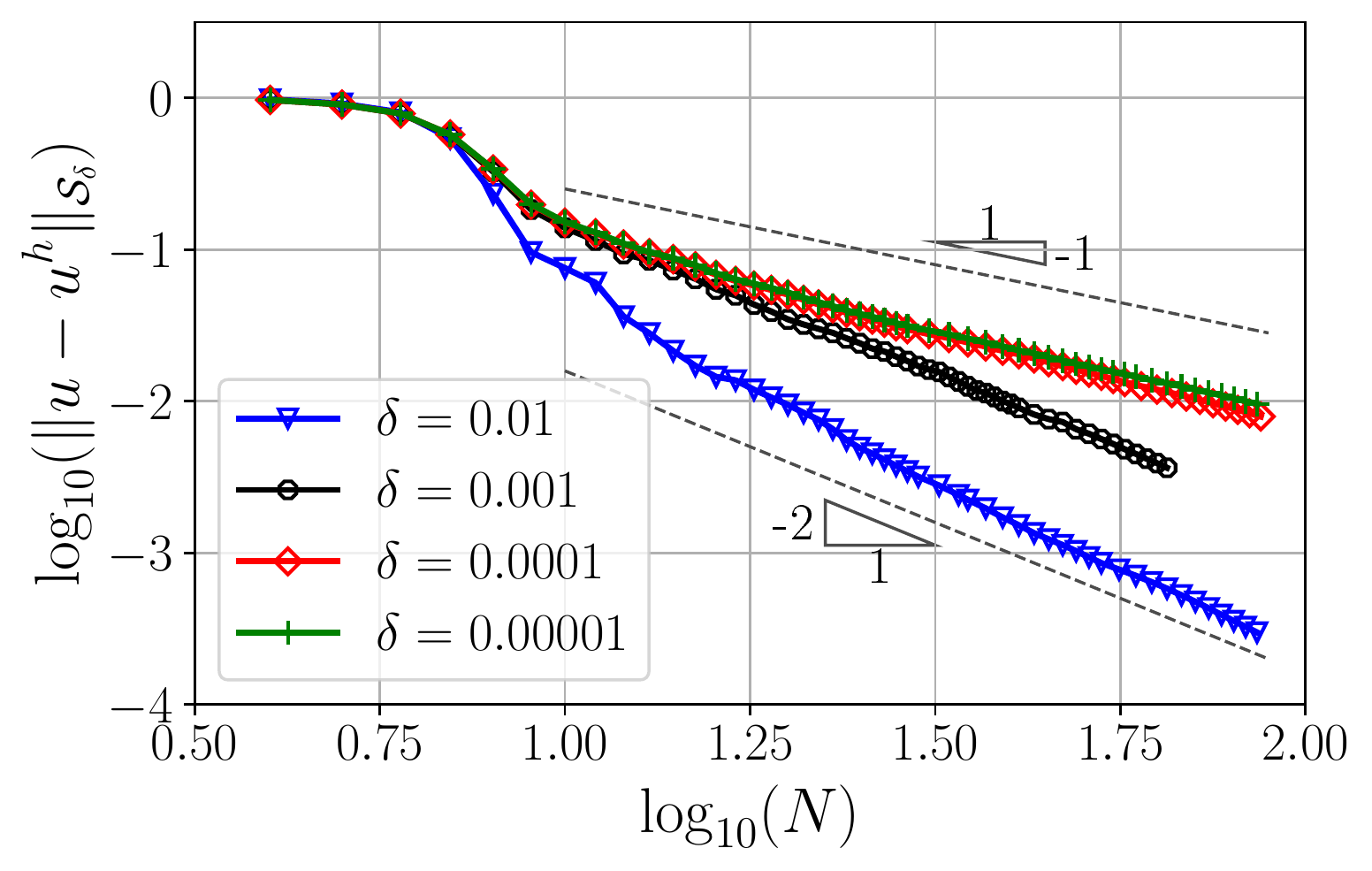}}
\caption{$\| \cdot \|_{\tn{eng}, V}$ for the test norm} 
\label{fig:eng2}
\end{subfigure}
\caption{Convergence profile (relative error) using  $\| \cdot \|_{\tn{app}, V}$ and  $\| \cdot \|_{\tn{eng}, V}$ for the test norms to solve the nonlocal convection-dominated diffusion problem with the manufactured solution given in \cref{eqn:sharpgradientsolution}.  Adaptive $h$-refinements and $\delta p=6$.} 
\label{fig:h-adaptive2}
\end{figure}

%\pagebreak

The difference using the two test space norms emerge under adaptive $h$-refinements. We adopt the D\"{o}rfler refinement strategy discussed in \cref{subsec:egnonlocal} for the adaptive $h$-refinements and convergence results are shown in \cref{fig:h-adaptive2}. Unlike in \cref{fig:h-adaptive} where the optimal convergence rates are observed at the beginning of the adaptive h-refinements,  the optimal convergence rates in \cref{fig:opt2,fig:eng2} are recovered only after some initial refinements. This is due to the existence of the sharp transition region (also called the boundary layer of the solution) near the right boundary, and it is necessary to use fine mesh to resolve this boundary layer of the solution. After resolving the boundary layer, the convergence results agree with what we have observed in \cref{subsec:egnonlocal} for a manufactured smooth solution. As presented in \cref{fig:h-adaptive2}, the proposed PG method with both test space norms are able to refine the mesh adaptively in an automatic fashion. 

It is worth noting that for the first few refinements in \cref{fig:opt2,fig:eng2}, the relative errors using $\| \cdot \|_{\tn{app},V}$ for the test norm are smaller than those using $\| \cdot \|_{\tn{eng},V}$ but the difference is indiscernible on the scale of the plots (see also \cref{fig:l2hrefinement2,fig:l2hadaptive2} for $L^2$ errors where the differences in the pre-asymptotic region are more easily seen). We demonstrate the differences by presenting the evolution of the numerical solution using both test space norms. %We use linear ($p=1$) elements for the trial space and let $\delta p = 6$ so as to get a better approximation of the test function. 
The evolution of the numerical solution for $\delta = 0.00001$ is shown in \cref{fig:examplesolution}. When the mesh is coarse, the numerical solution using $\| \cdot \|_{\tn{app},V}$ almost interpolates the exact solution, while the numerical solution using $\| \cdot \|_{\tn{eng},V}$ suffers from significant oscillations. Thus the superiority of using the optimal test space norm becomes obvious. The proposed PG method with the test space norm \cref{eqn:appopt1} is indeed stable in solving the nonlocal convection-dominated diffusion problem.

Finally, we remark that $\| \cdot \|_{\tn{app},V}$ in \cref{eqn:appopt1} is sub-optimal because it is only an approximation of $\| \cdot \|_{\tn{opt},V}$ in \cref{eqn:ExplicitOpt}.  When $\delta=0$, $\| \cdot \|_{\tn{app},V}$  and $\| \cdot \|_{\tn{opt},V}$ are identical. It is then expected that the performance of $\| \cdot \|_{\tn{app},V}$  deteriorates for large $\delta$, and this can be observed by comparing \cref{fig:examplesolution} ($\delta = 0.00001$) against \cref{fig:examplesolutionbigdelta} ($\delta = 0.01$). The numerical solutions in \cref{fig:examplesolutionbigdelta} using $\| \cdot \|_{\tn{app},V}$ for the test norm exhibit minor oscillations but the oscillations are much less severe than that of using $\| \cdot \|_{\tn{eng},V}$.

\begin{figure}[htb!]%htb
%\captionsetup[subfigure]{font=scriptsize, labelfont=scriptsize}
\begin{subfigure}[b]{0.5\textwidth} 
\centering
\scalebox{0.4}{\includegraphics{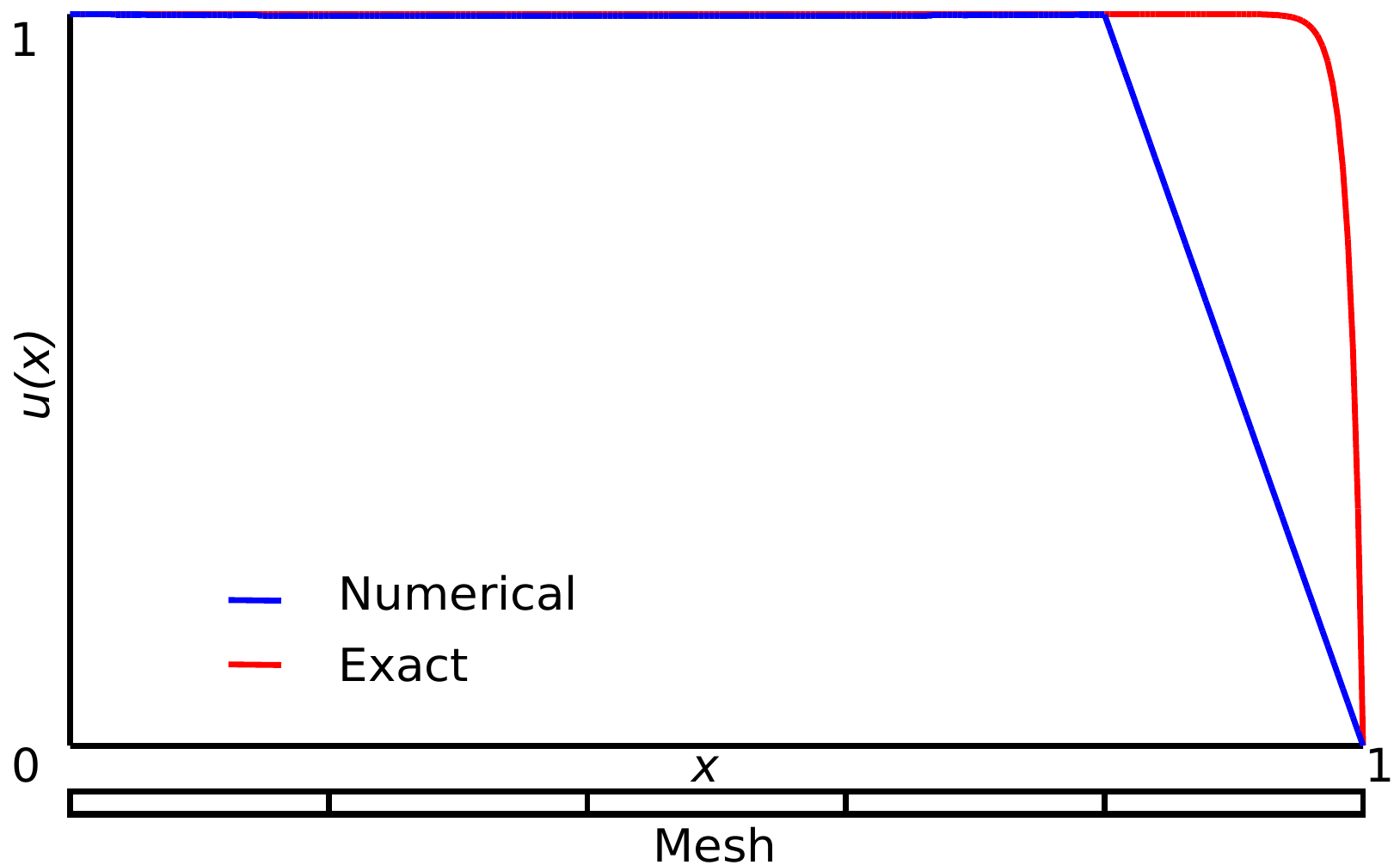}} 
%\caption{$\| \cdot \|_{\tn{app}, V}$ for the test norm} 
%\label{fig:l2opt2}
\end{subfigure}
\begin{subfigure}[b]{0.5\textwidth} 
\centering
\scalebox{0.4}{\includegraphics{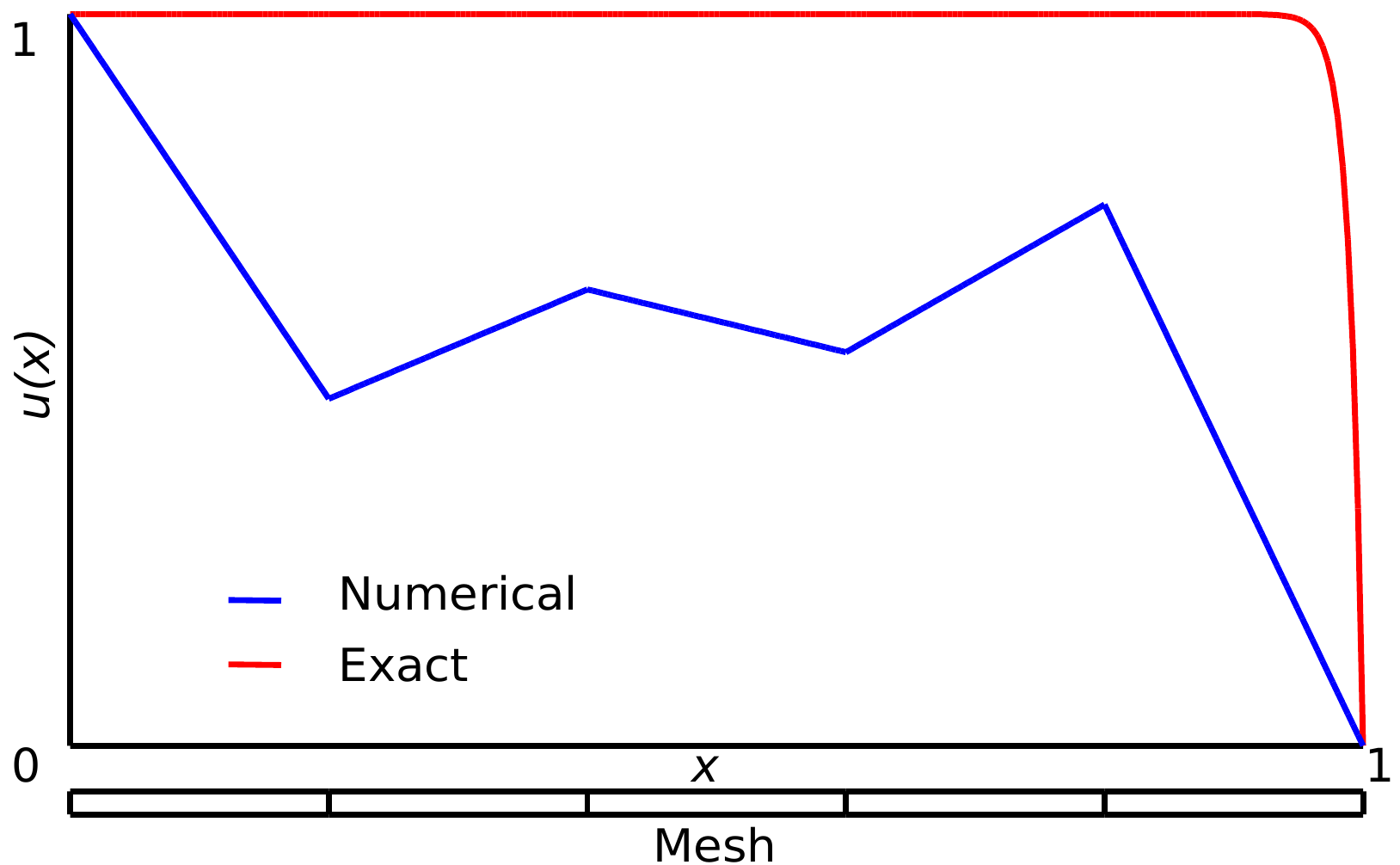}}
%\caption{$\| \cdot \|_{\tn{opt}, V}$ for the test norm} 
%\label{fig:l2opt2}
\end{subfigure}
\begin{subfigure}[b]{0.5\textwidth} 
\centering
\scalebox{0.4}{\includegraphics{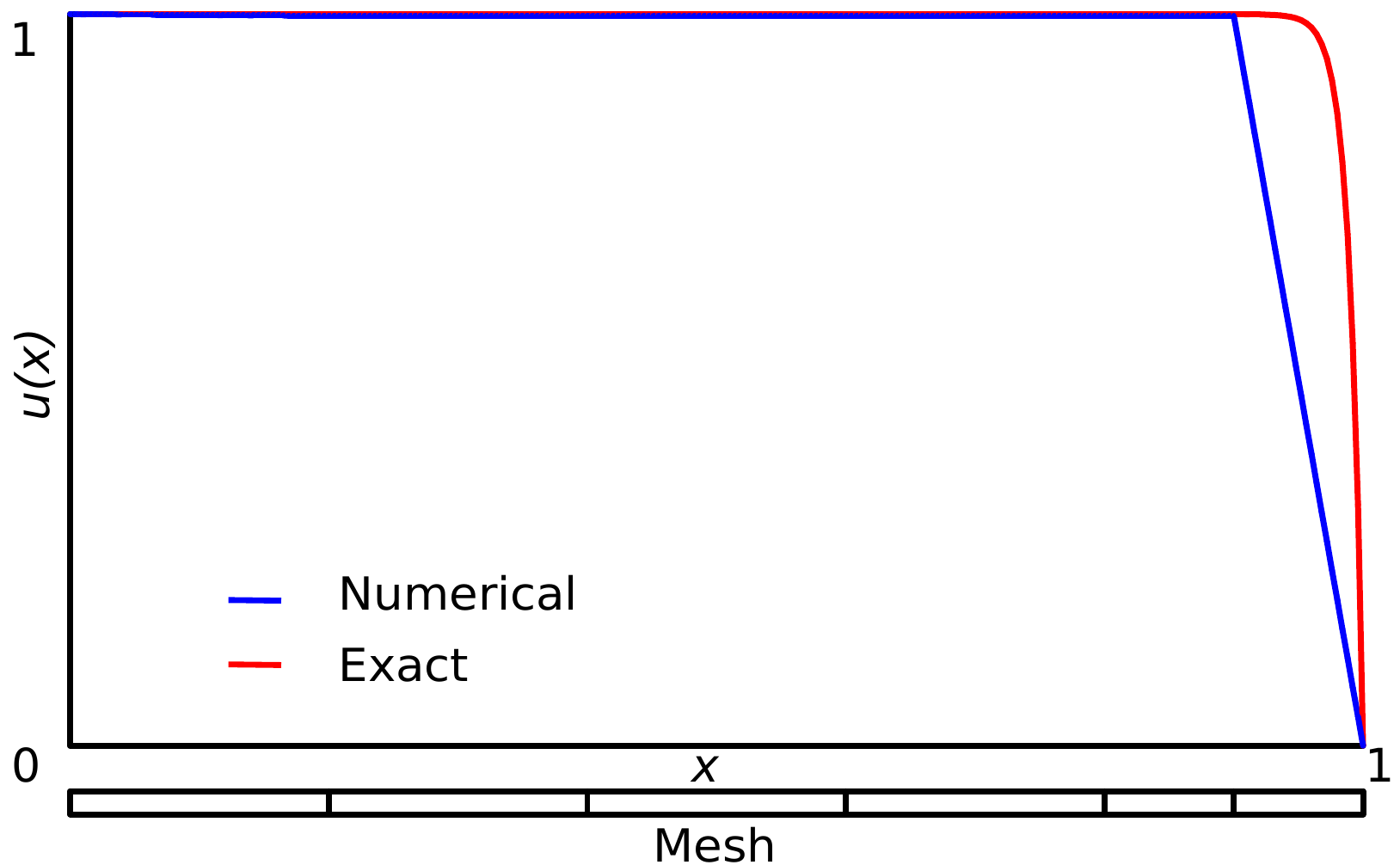}} 
\end{subfigure}
\begin{subfigure}[b]{0.5\textwidth} 
\centering
\scalebox{0.4}{\includegraphics{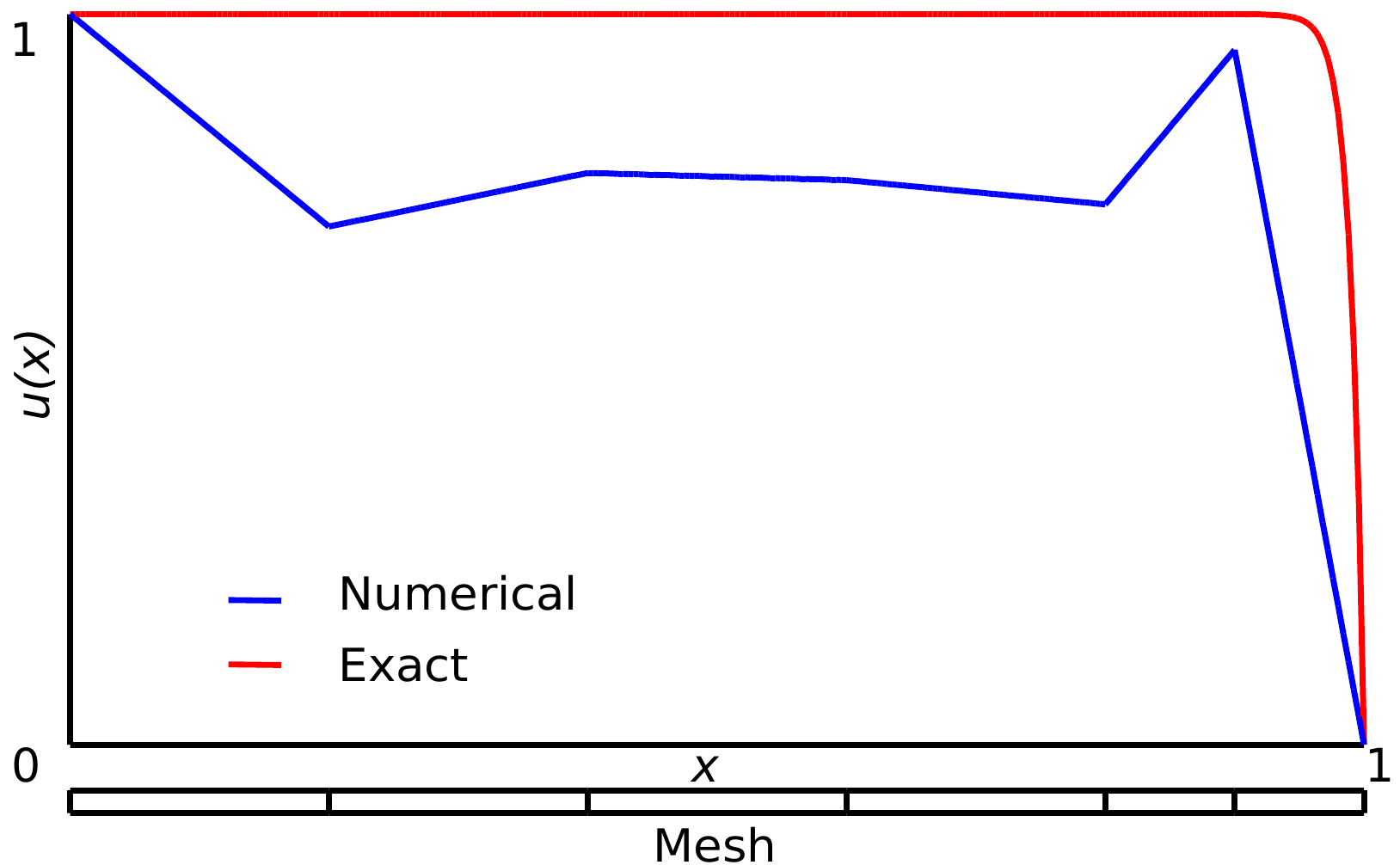}}
\end{subfigure}
\begin{subfigure}[b]{0.5\textwidth} 
\centering
\scalebox{0.4}{\includegraphics{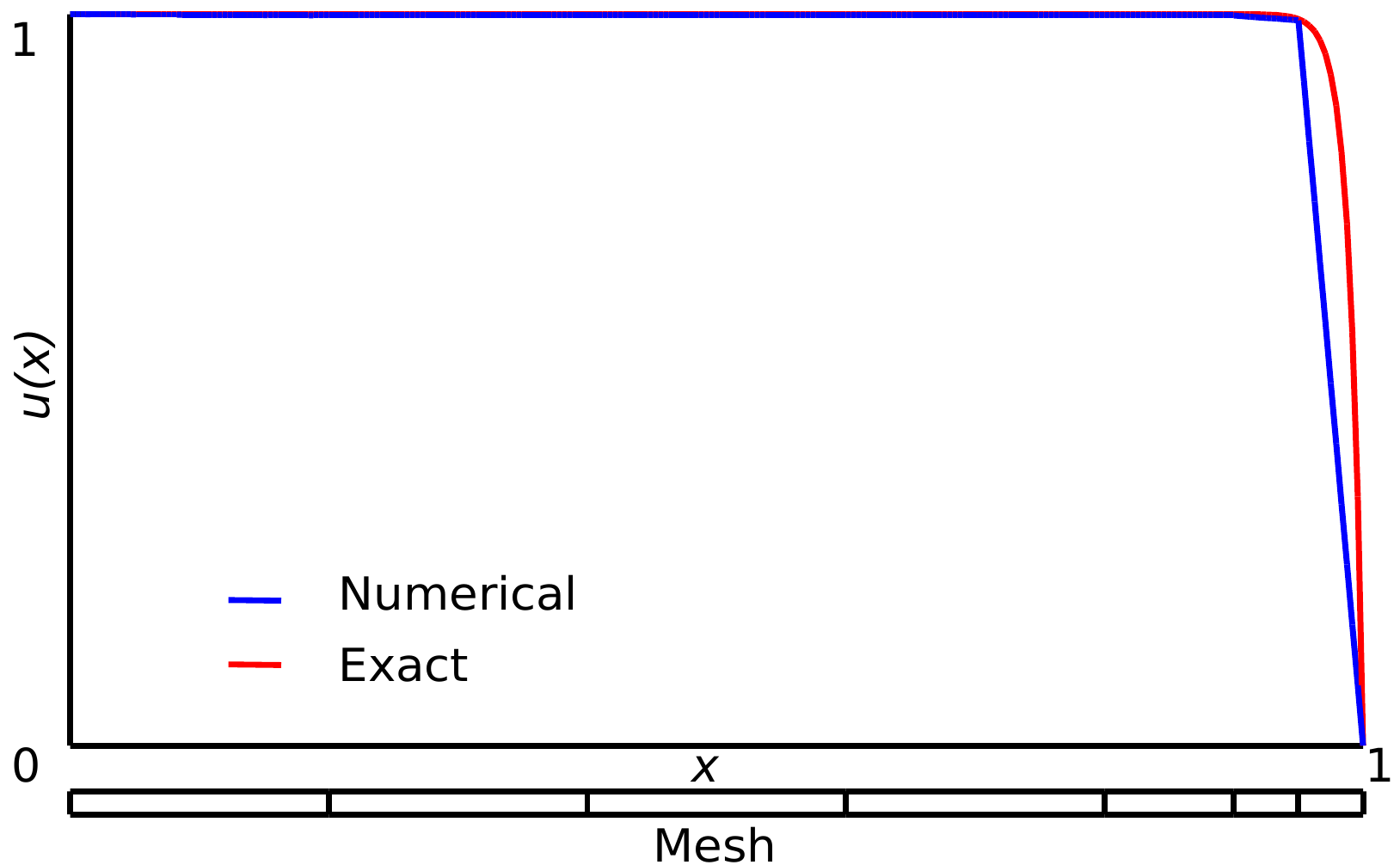}} 
\end{subfigure}
\begin{subfigure}[b]{0.5\textwidth} 
\centering
\scalebox{0.4}{\includegraphics{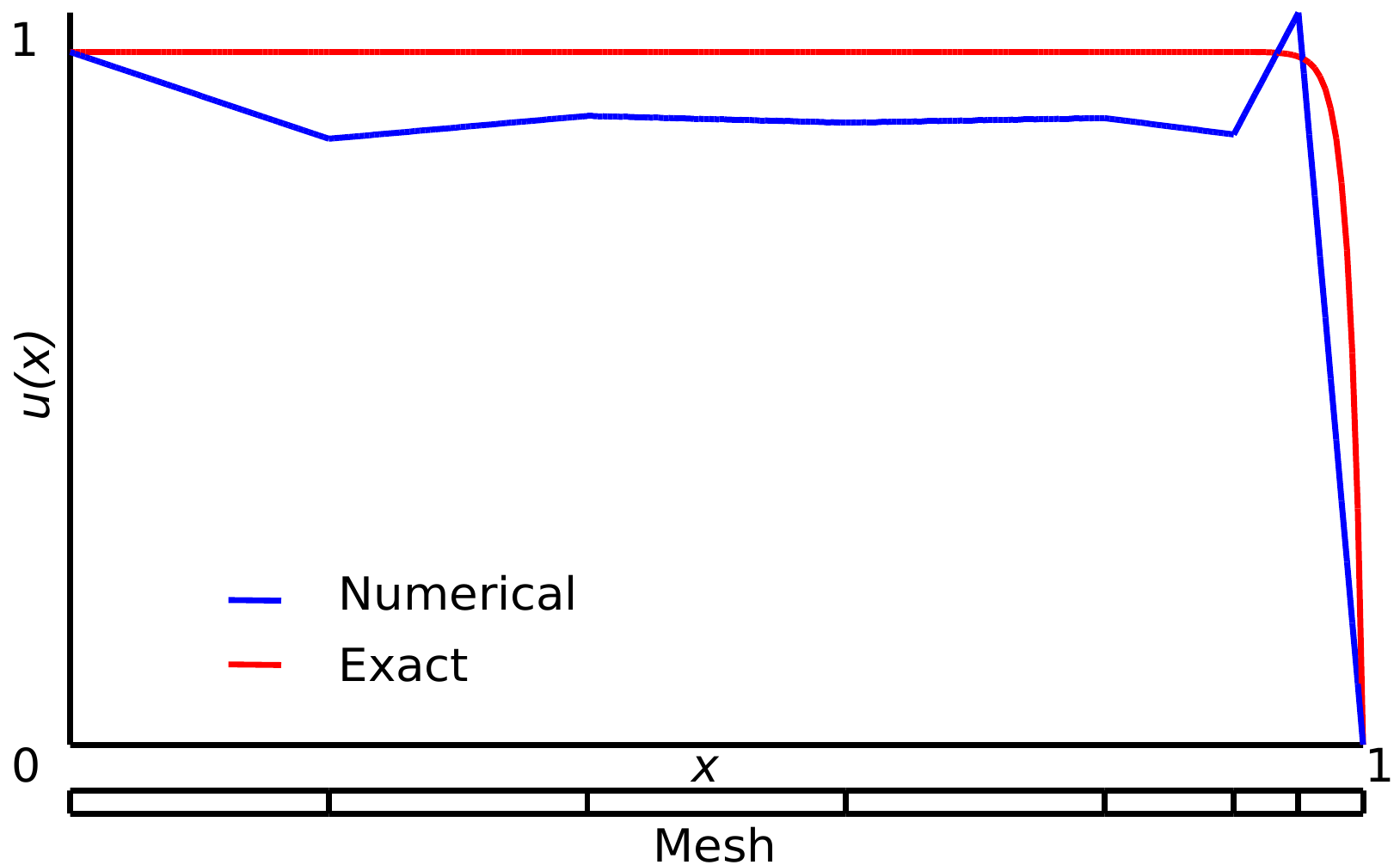}}
\end{subfigure}
\begin{subfigure}[b]{0.5\textwidth} 
\centering
\scalebox{0.4}{\includegraphics{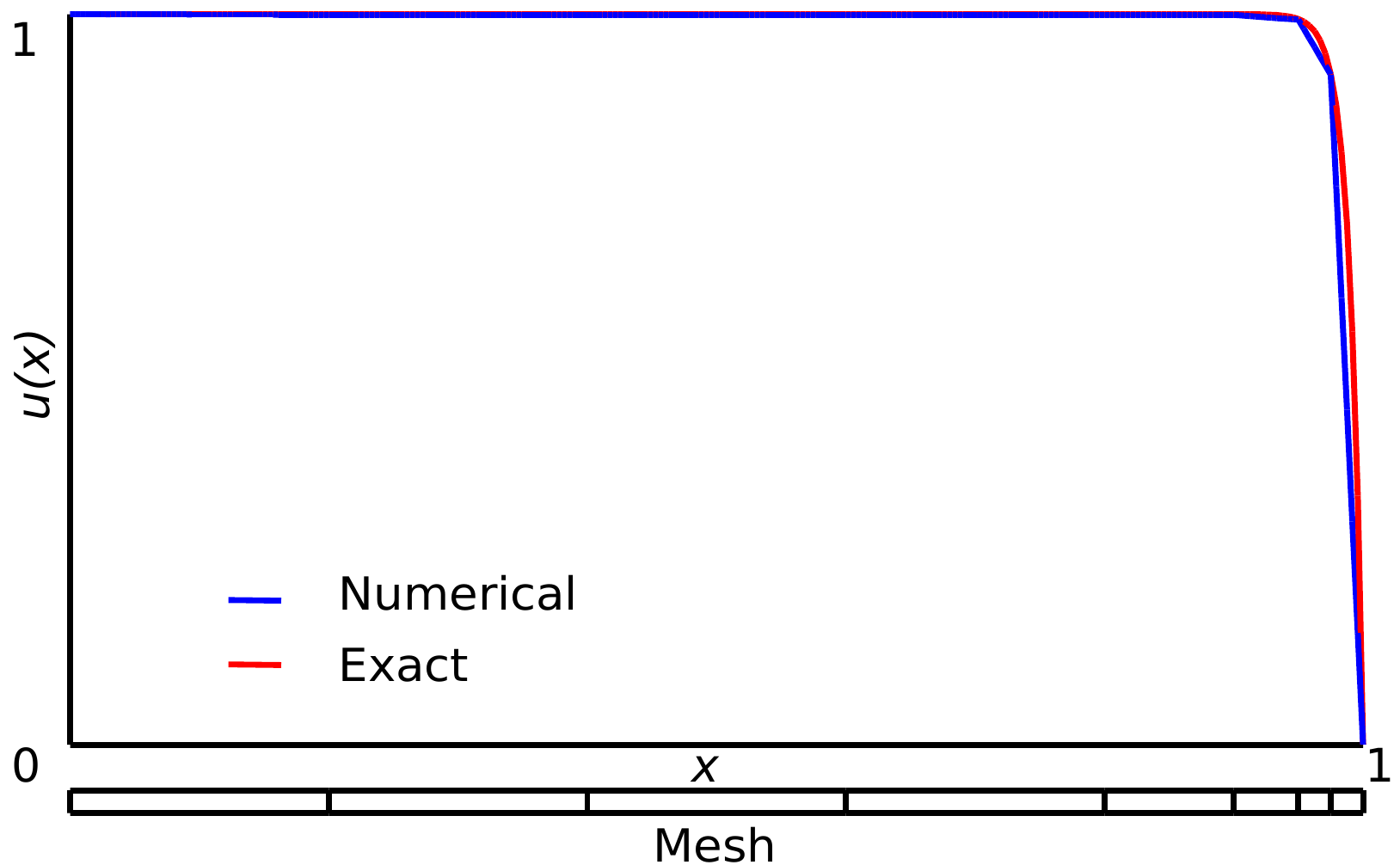}} 
\end{subfigure}
\begin{subfigure}[b]{0.5\textwidth} 
\centering
\scalebox{0.4}{\includegraphics{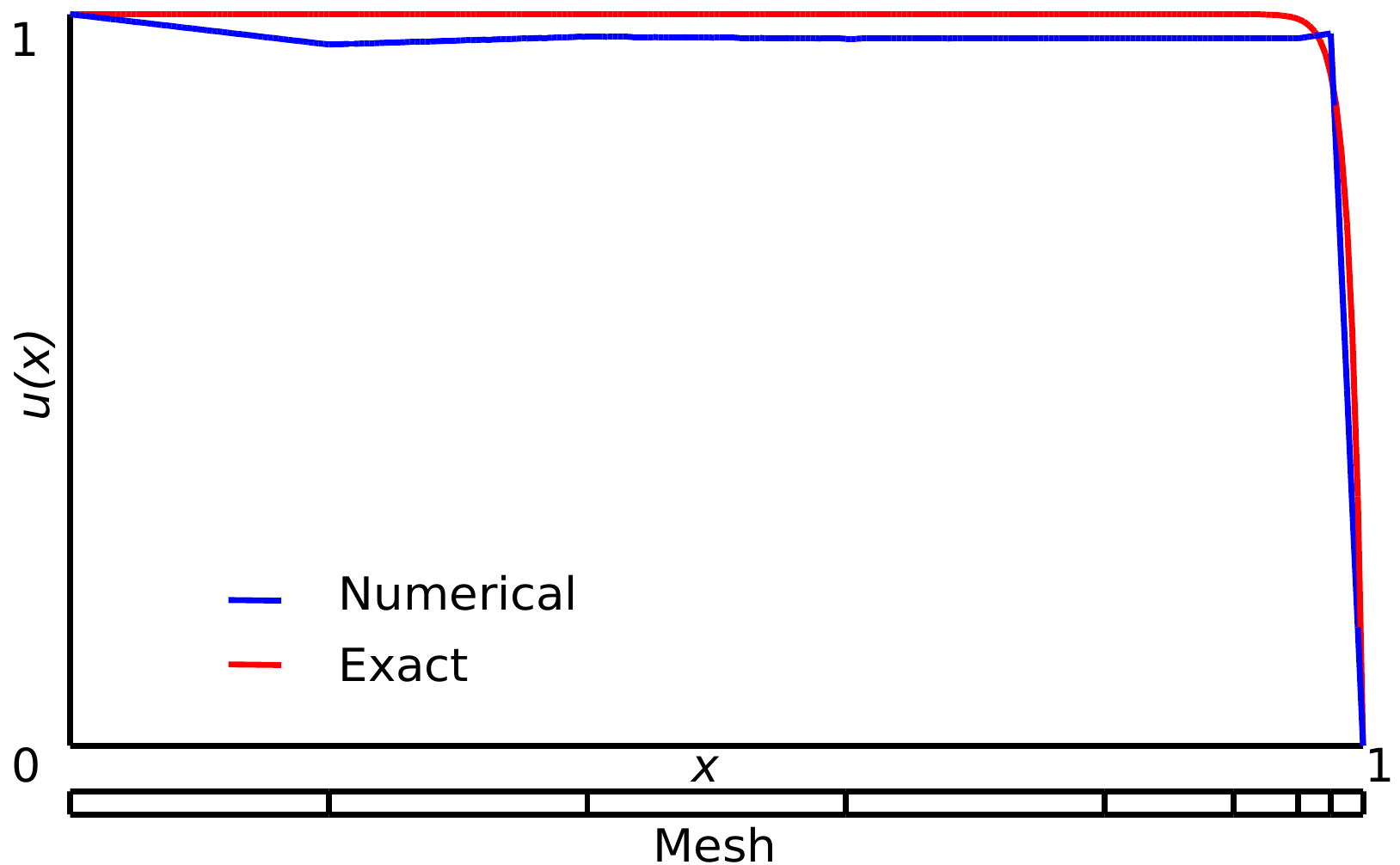}}
\end{subfigure}
\begin{subfigure}[b]{0.5\textwidth} 
\centering
\scalebox{0.4}{\includegraphics{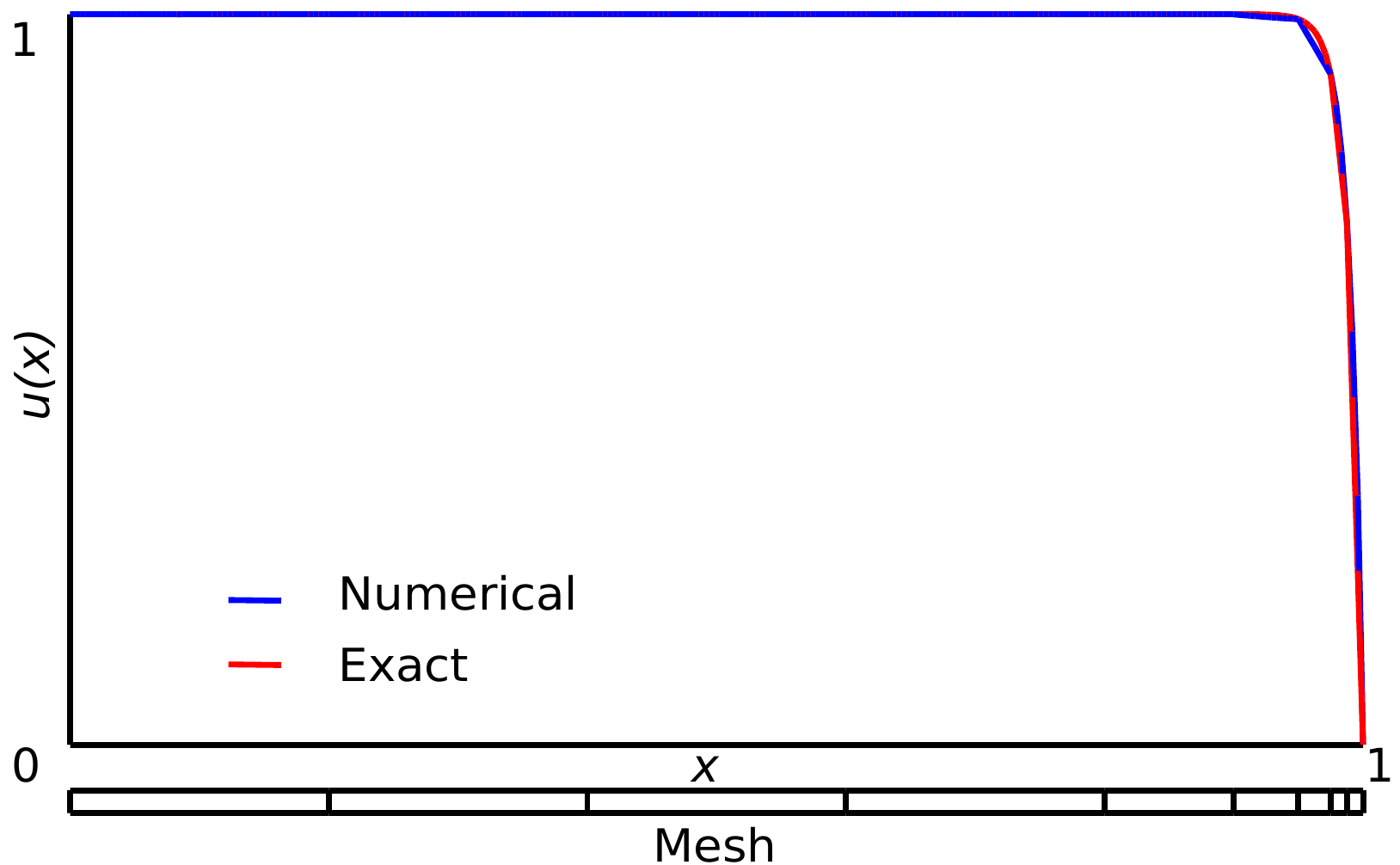}} 
\end{subfigure}
\begin{subfigure}[b]{0.5\textwidth} 
\centering
\scalebox{0.4}{\includegraphics{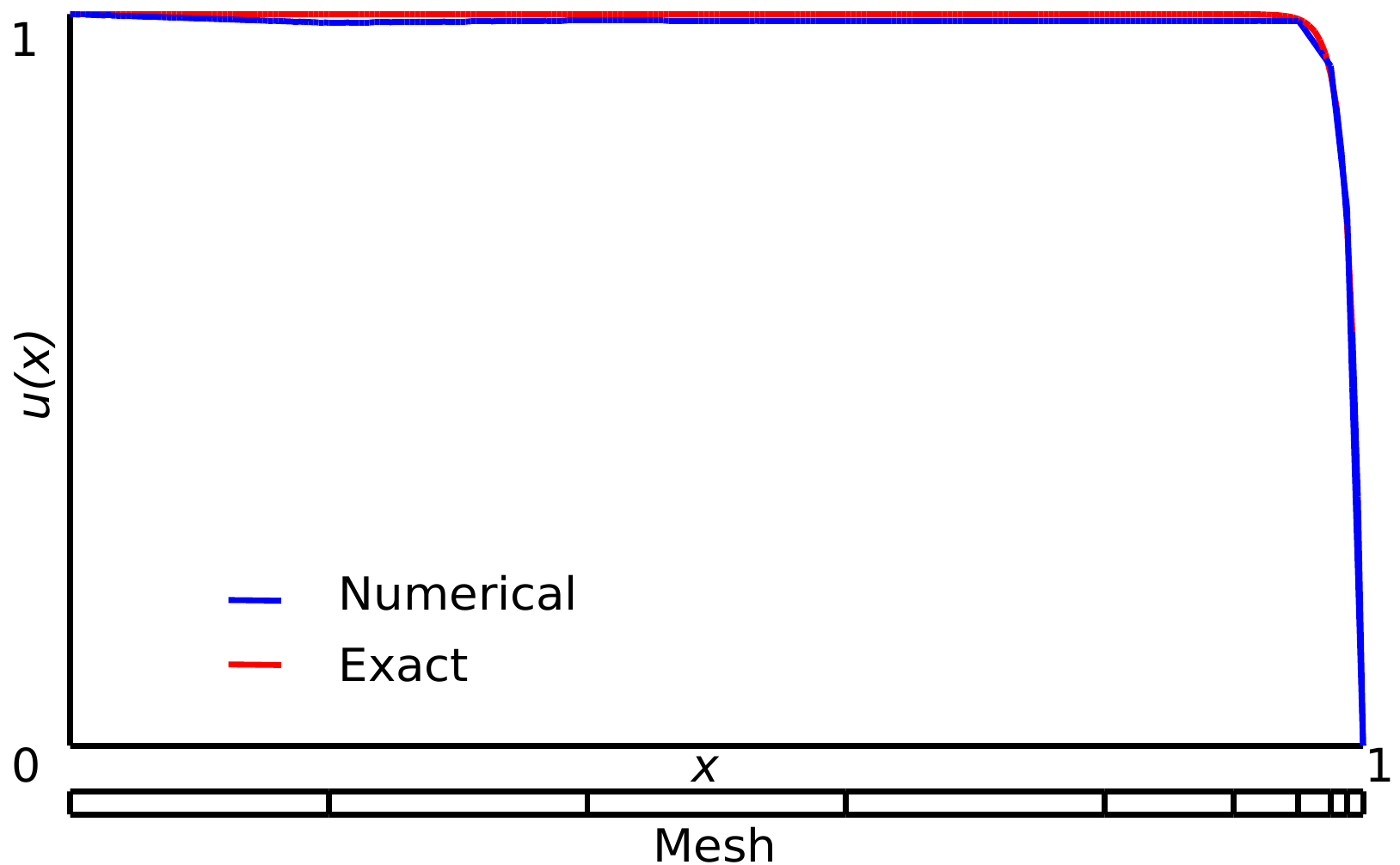}}
\end{subfigure}
\caption{Evolution of the numerical solution using  $\| \cdot \|_{\tn{app}, V}$ (Left) and  $\| \cdot \|_{\tn{eng}, V}$ (Right) for the test norms to solve the nonlocal convection-dominated diffusion problem with the manufactured solution given in \cref{eqn:sharpgradientsolution}.  Adaptive $h$-refinements, $p=1$, $\delta p = 6 $, and $\delta = 0.00001$.} 
\label{fig:examplesolution}
\end{figure}

\begin{figure}[htb!]%htb
%\captionsetup[subfigure]{font=scriptsize, labelfont=scriptsize}
\begin{subfigure}[b]{0.5\textwidth} 
\centering
\scalebox{0.4}{\includegraphics{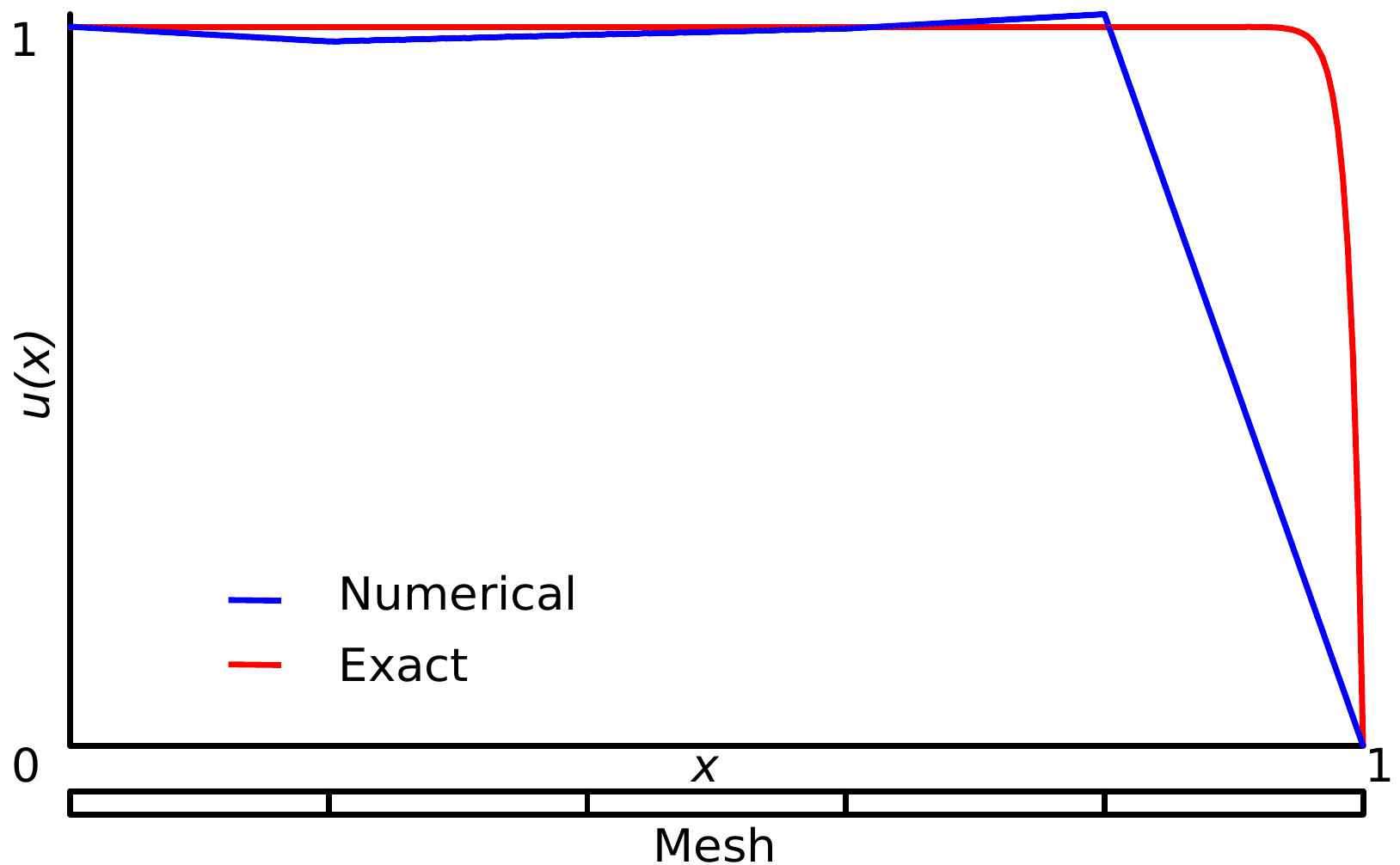}} 
%\caption{$\| \cdot \|_{\tn{app}, V}$ for the test norm} 
%\label{fig:lopts2}
\end{subfigure}
\begin{subfigure}[b]{0.5\textwidth} 
\centering
\scalebox{0.4}{\includegraphics{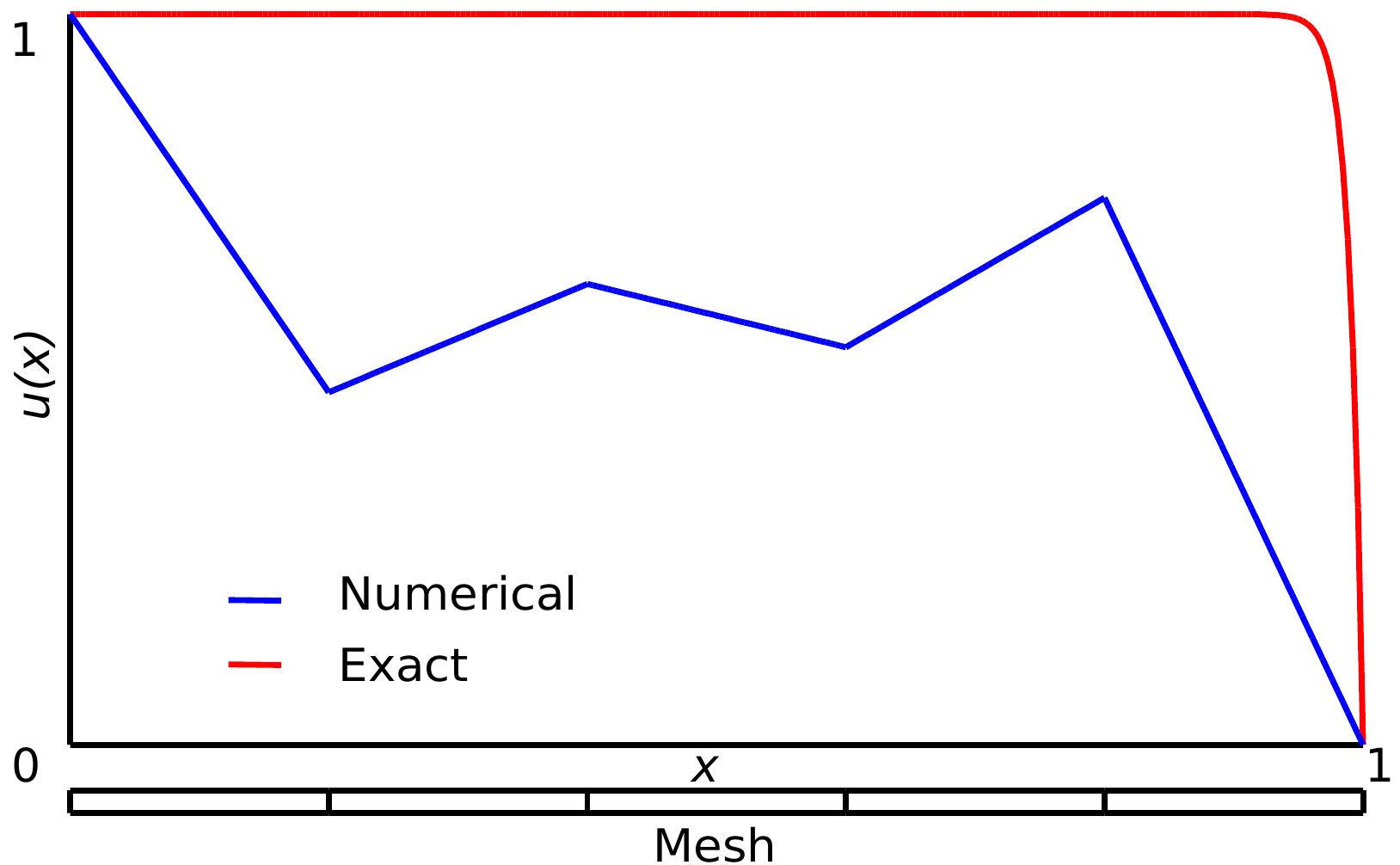}}
%\caption{$\| \cdot \|_{\tn{eng}, V}$ for the test norm} 
%\label{fig:lengs2}
\end{subfigure}
\begin{subfigure}[b]{0.5\textwidth} 
\centering
\scalebox{0.4}{\includegraphics{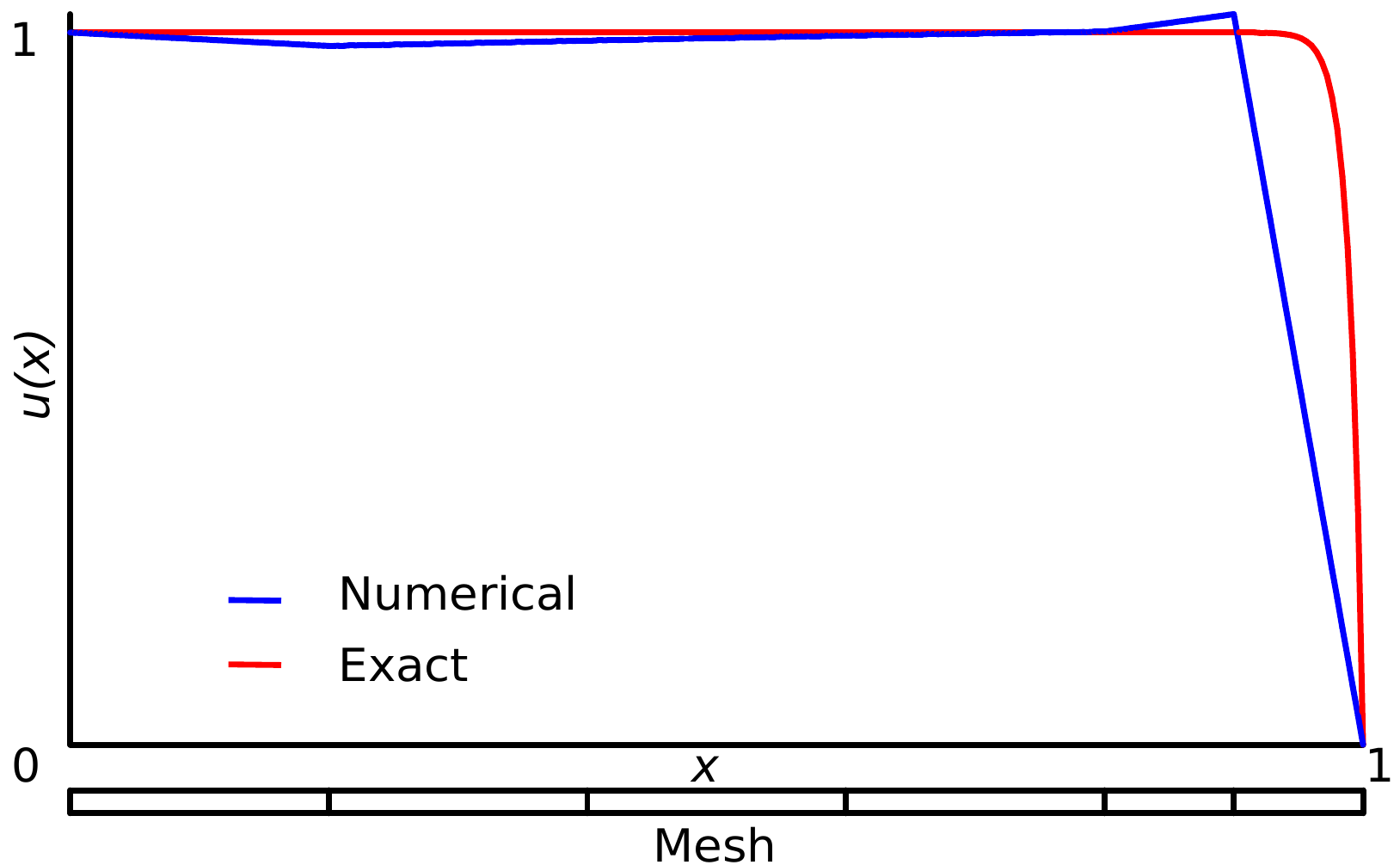}} 
\end{subfigure}
\begin{subfigure}[b]{0.5\textwidth} 
\centering
\scalebox{0.4}{\includegraphics{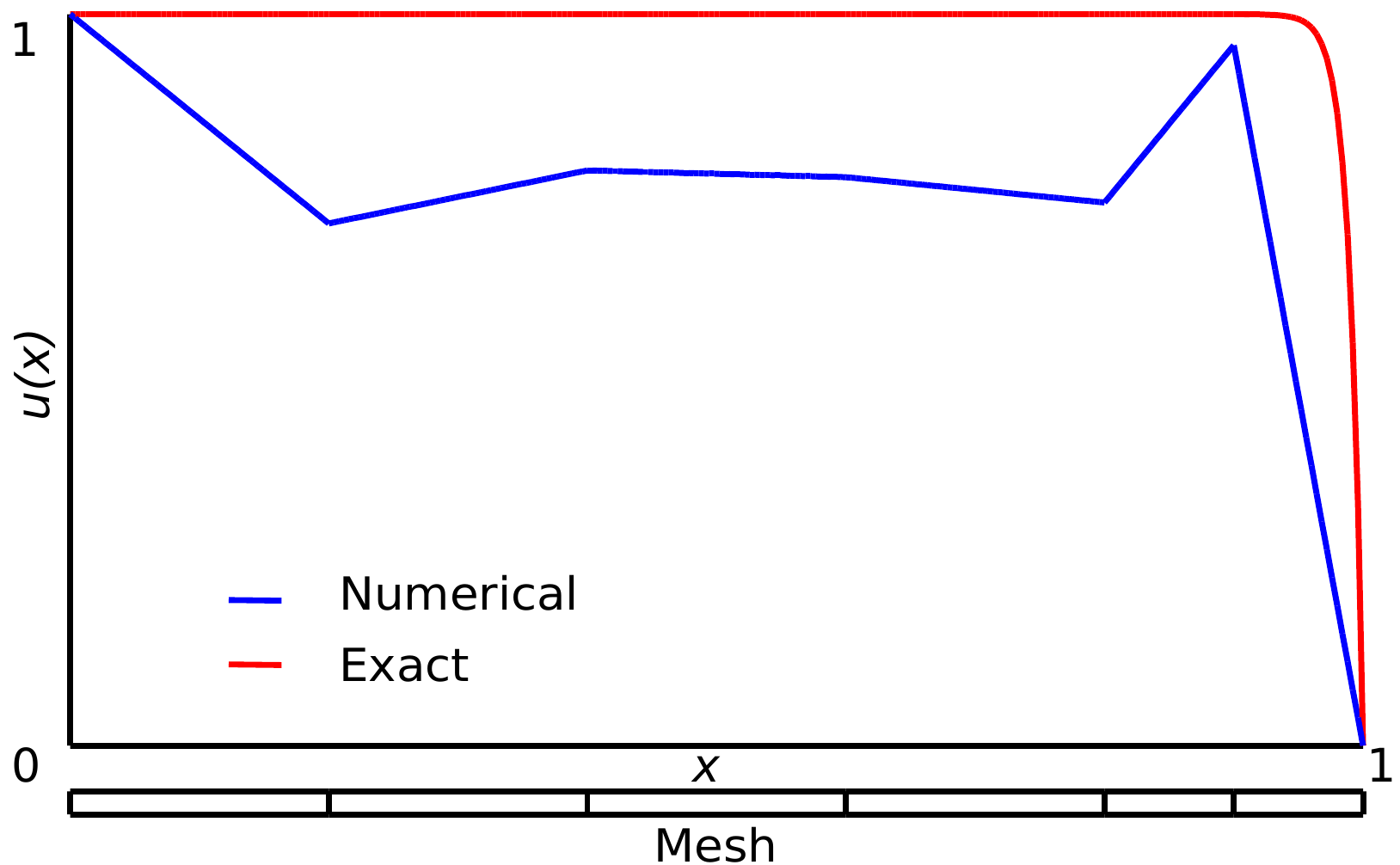}}
\end{subfigure}
\begin{subfigure}[b]{0.5\textwidth} 
\centering
\scalebox{0.4}{\includegraphics{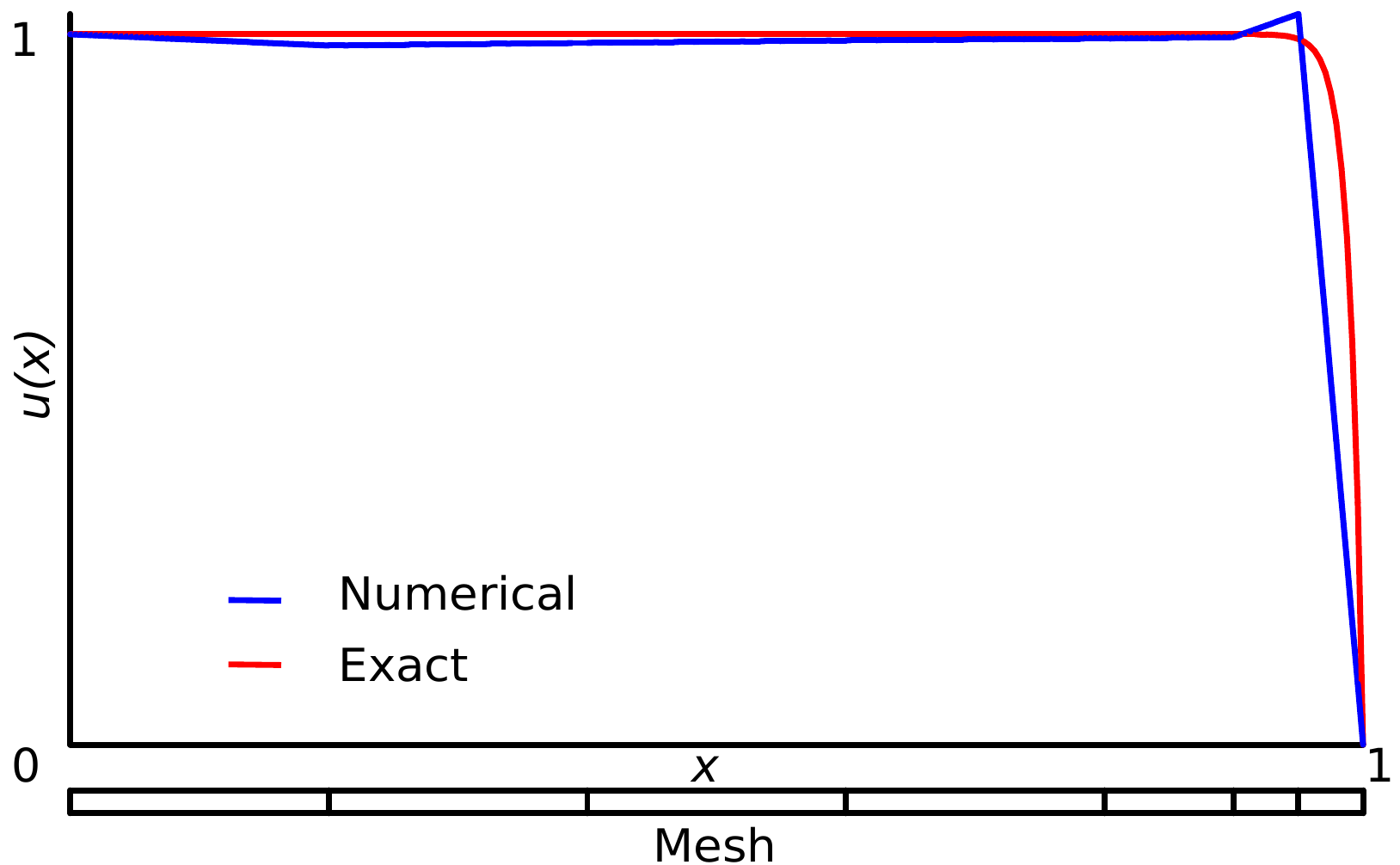}} 
\end{subfigure}
\begin{subfigure}[b]{0.5\textwidth} 
\centering
\scalebox{0.4}{\includegraphics{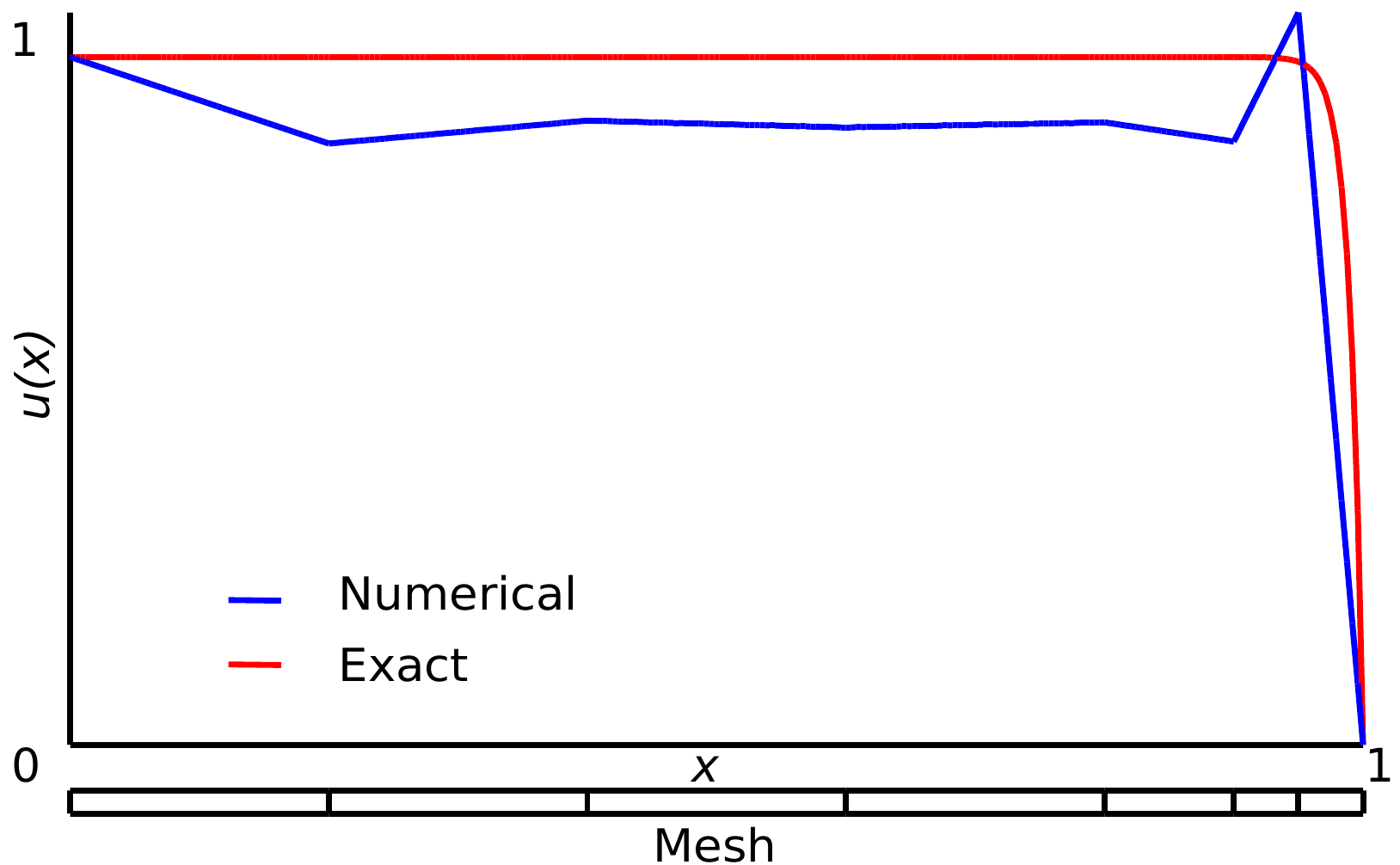}}
\end{subfigure}
\begin{subfigure}[b]{0.5\textwidth} 
\centering
\scalebox{0.4}{\includegraphics{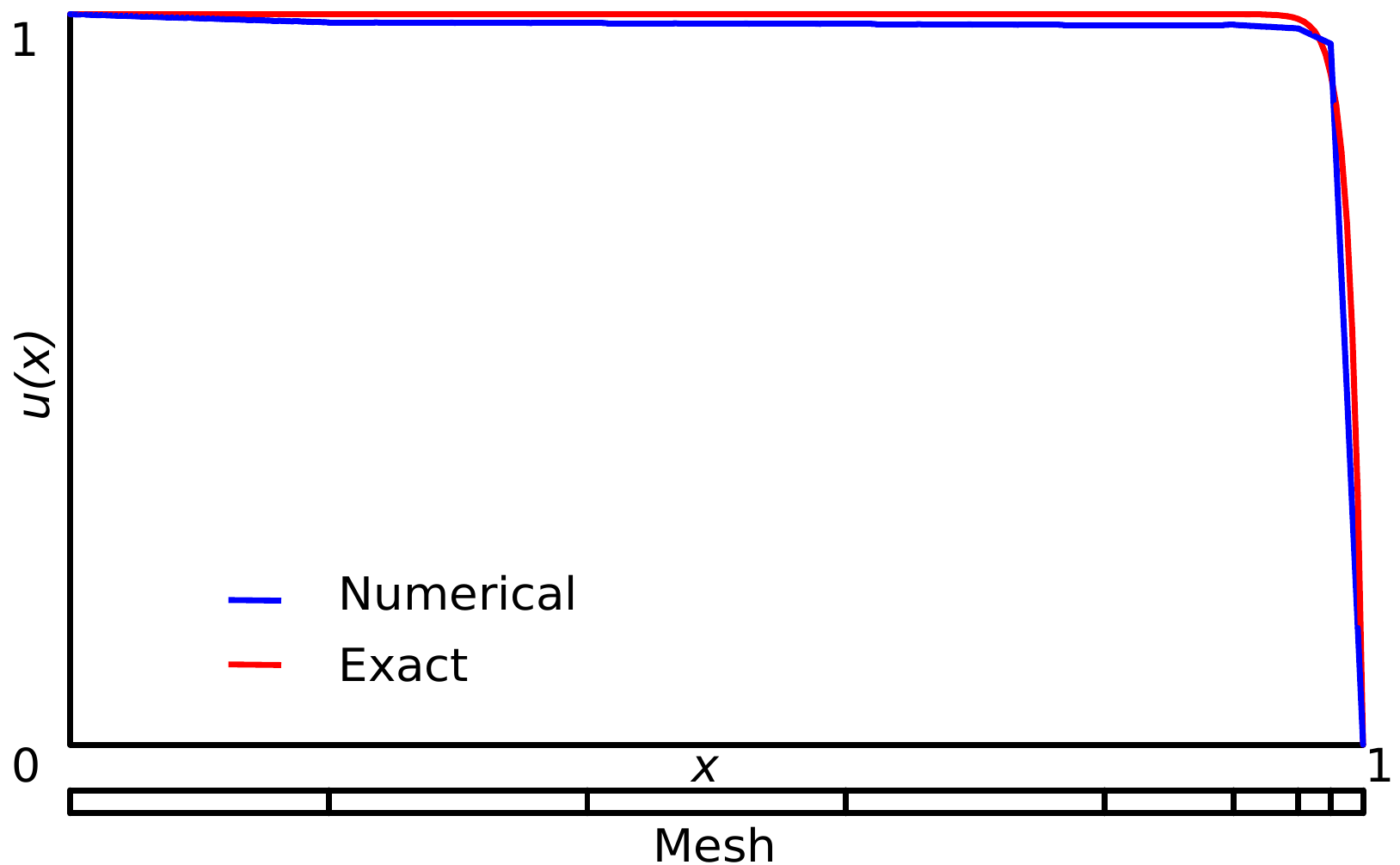}} 
\end{subfigure}
\begin{subfigure}[b]{0.5\textwidth} 
\centering
\scalebox{0.4}{\includegraphics{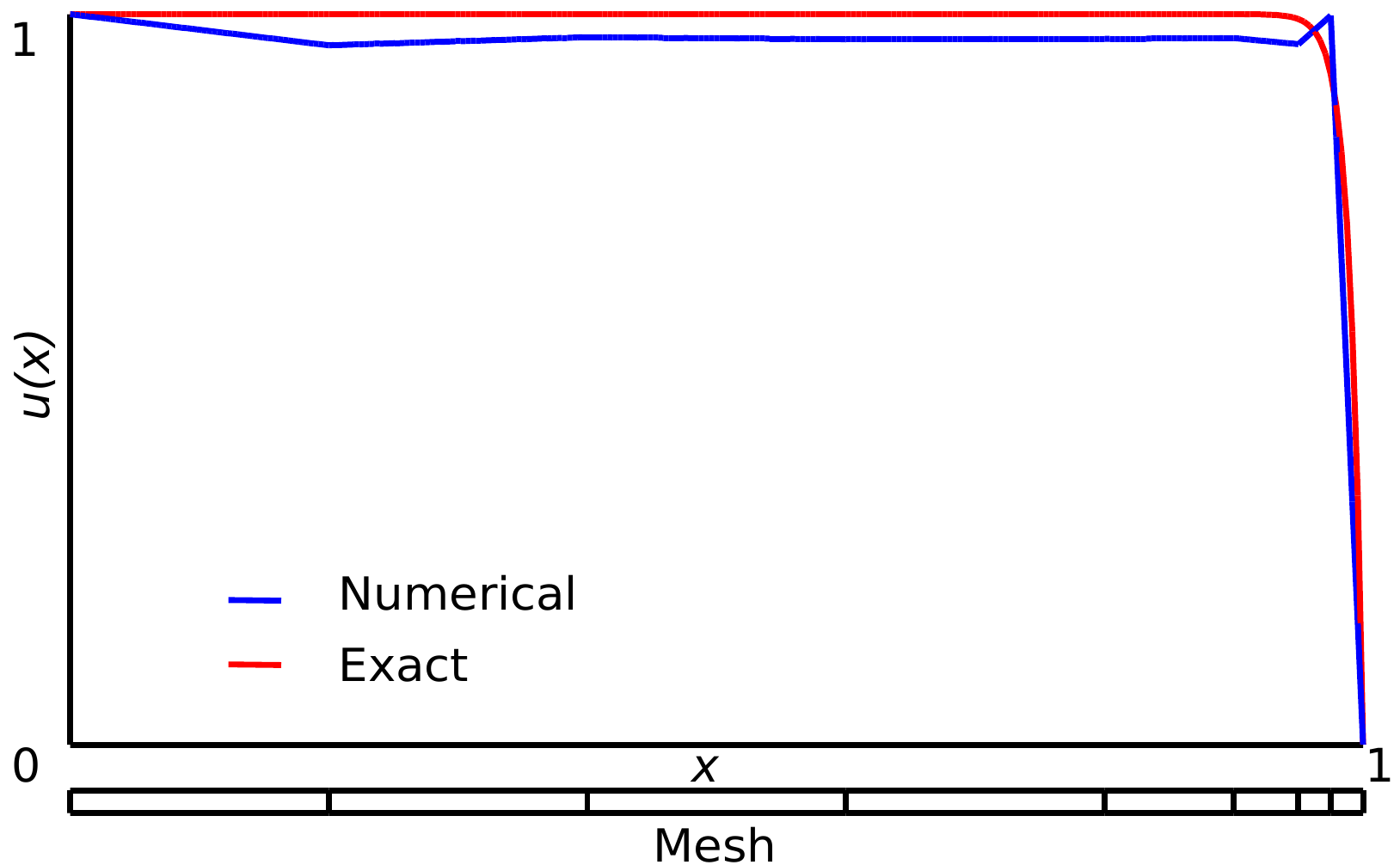}}
\end{subfigure}
\begin{subfigure}[b]{0.5\textwidth} 
\centering
\scalebox{0.4}{\includegraphics{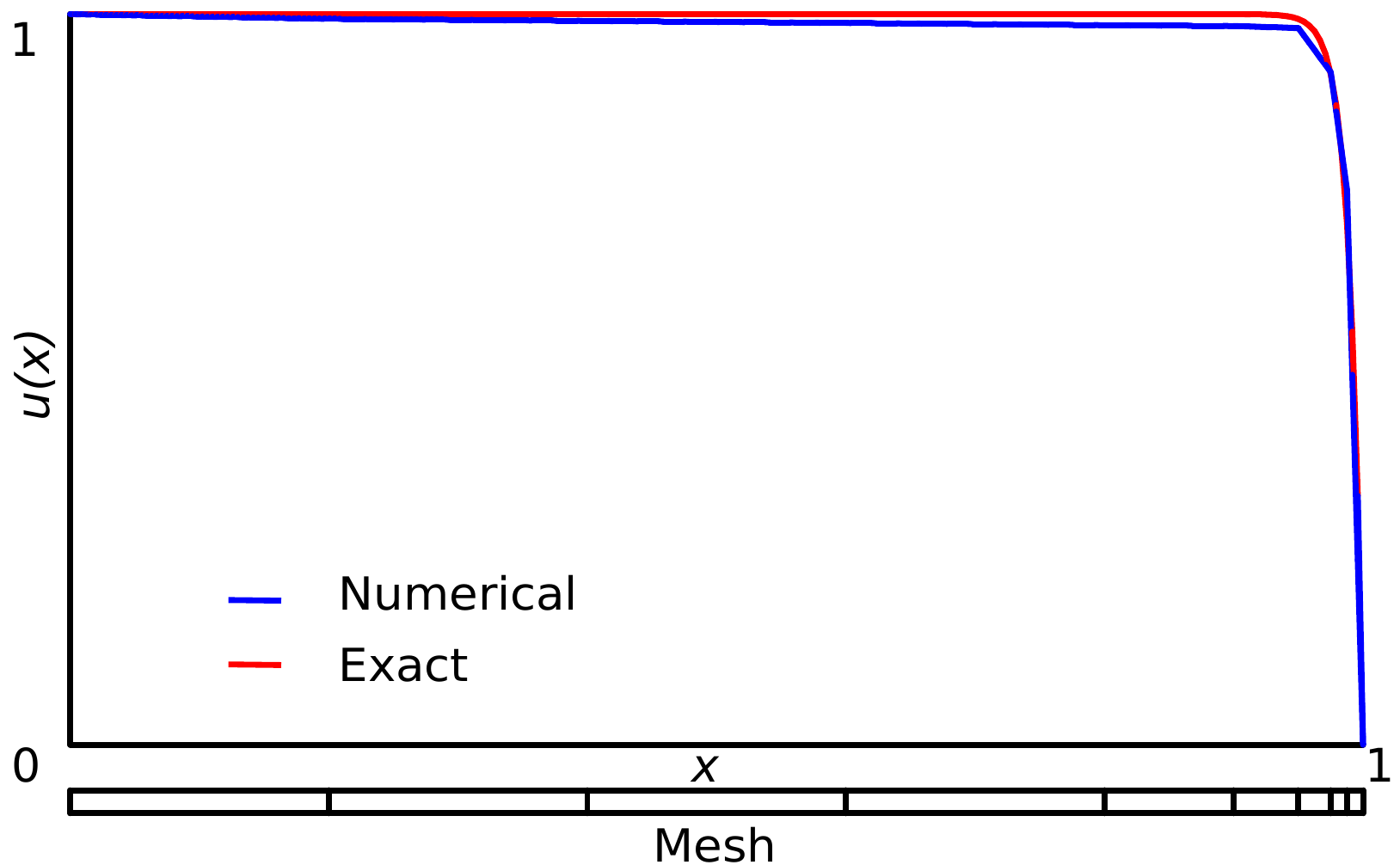}} 
\end{subfigure}
\begin{subfigure}[b]{0.5\textwidth} 
\centering
\scalebox{0.4}{\includegraphics{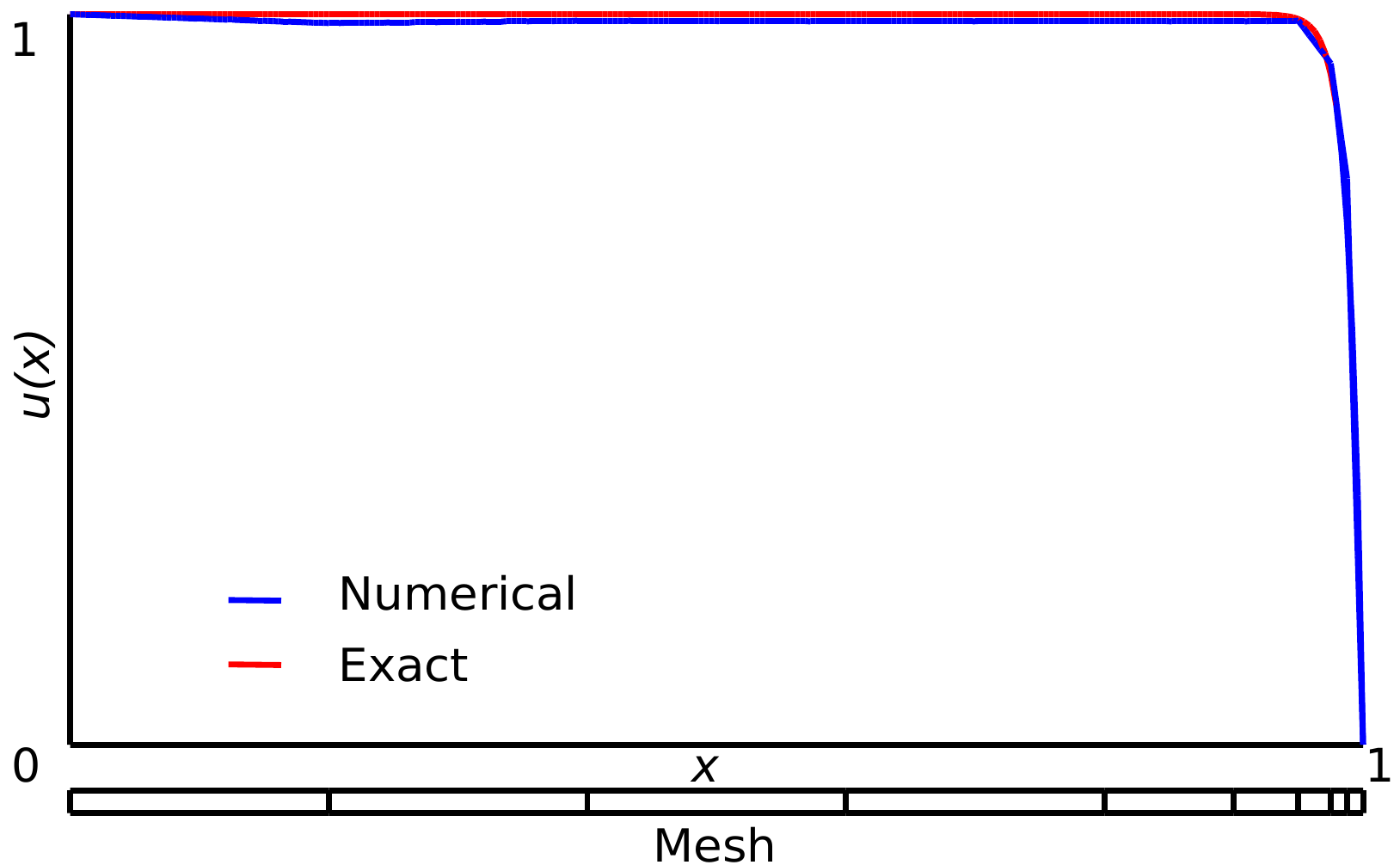}}
\end{subfigure}
\caption{Evolution of the numerical solution using  $\| \cdot \|_{\tn{app}, V}$ (Left) and  $\| \cdot \|_{\tn{eng}, V}$ (Right) for the test norms to solve the nonlocal convection-dominated diffusion problem with the manufactured solution given in \cref{eqn:sharpgradientsolution}.  Adaptive $h$-refinements, $p=1$, $\delta p = 6 $, and $\delta = 0.01$.} 
\label{fig:examplesolutionbigdelta}
\end{figure}

\section{Conclusion} \label{sec:conclusion}
In this paper, we have presented a PG method for the nonlocal convection-dominated diffusion problem using optimal test functions in a general space dimension. 
The well-posedness of the  nonlocal convection-dominated diffusion problem is established for two types of nonlocal convection kernels, with spherical or hemispherical interaction regions.  The optimal test space norm is identified in both abstract and explicit forms. However, the optimal test space noem is not practical in computation as it involves inverting the nonlocal diffusion operator. We instead propose an approximate optimal test norm  $\| \cdot\|_{\tn{app},V}$ which is easy to implement in one dimension. Using manufactured solutions, convergence results in the nonlocal energy norm and the performance of the approximate optimal test norm are tested in comparison with the simple nonlocal energy norm on the test space $\| \cdot \|_{\tn{eng},V}$.  

%A pre-computed quadrature rule as in \cite{milovanovic2012special}, specific to the chosen nonlocal kernel, is adopted to reduce the integration error. The results obtained for the two test norms are similar for both manufactured nonlocal and local solutions. 

For a  manufactured smooth solution, uniform $h$- and $p$-refinements, and adaptive $h$-refinements are carried out, and the convergence results in $\| \cdot \|_{\cS_\del}$ are similar for the two test space norms. 
%the nonlocal boundary conditions are imposed using the corresponding values of the exact solution. 
For the uniform $h$-refinements, second-order convergence rates are observed when $\delta $ is large while we have obtained first-order convergence rates in  when $\delta$ is small. The variations in convergence rates for different sizes of $\delta$ are due to the characteristics of the nonlocal energy space $\cS_\del$. For large $\delta$, $\| \cdot \|_{\cS_\del} $ is equivalent to the $L^2$ norm \cite{Du2012a}, and as $\delta$ approaches to zero, $\| \cdot \|_{\cS_\del} $ converges to the local energy norm,  ${H^1}$-semi norm \cite{bourgain2001another}. 
%This is the first time that the limiting behaviour of the nonlocal energy space has been verified numerically.
Exponential rates have been observed for the uniform $p$-refinements. For the adaptive $h$-refinements, first- and second-order convergence rates are obtained when $\delta = 0.1$ and $0.00001$, respectively. Moreover, first-order convergence rates are recovered when both $\delta$ and $h$ go to zero and the observed convergence is independent of the coupling between $\delta$ and $h$. This shows that the proposed PG method is asymptotically compatible. 

%The proposed PG method also provides an a-posterior error representation that enables the adaptive refinement in the trial space.
%However, further increasing the horizon size $(\delta = 0.1)$ leads to divergence due to the increase of the integration error.

For a manufactured solution with a sharp transition region near the boundary, uniform and adaptive $h$-refinements are considered. For the uniform $h$-refinements, convergence results in the asymptotic regime agree with what we have observed for a manufactured smooth solution, and they are similar for the two test space norms. For the adaptive $h$-refinement, optimal convergence rates are recovered in the asymptotic regime for both test space norms. The superiority of $\| \cdot \|_{\tn{app},V}$ over $\| \cdot \|_{\tn{eng},V}$ is observed in the pre-asymptotic regime. 
While numerical solutions using $\| \cdot \|_{\tn{eng},V}$ suffer from significant oscillations on coarse meshes, we observe little (for larger $\delta$) or no (for smaller $\delta$) oscillations of the numerical solutions by using $\| \cdot \|_{\tn{app},V}$. 
%Numerical experiments show that the approximate optimal test norm performs far better than the simple nonlocal energy norm in the pre-asymptotic regime. 

%For a manufactured solution with sharp gradient near the boundary, we solve the nonlocal problem using adaptive $h$-refinement and the boundary conditions are imposed by extending the local boundary condition on $\partial \Omega$ to the nonlocal interaction zone $\Omega_\cI$.  The numerical solution agrees well with the local exact solution for $\epsilon = 0.01$. Therefore, the nonlocal problem is a good approximation to the classical convection-dominated diffusion problem. 

There are many challenging topics remaining to be addressed in the future. The numerical experiments in this work have been limited to 1d and it would be meaningful to extend this work to higher dimensions.
In higher dimensions, it would be more reasonable to look at the ultra-weak formution \cite{demkowicz2020double,demkowicz2012class} of the nonlocal problem to avoid the inversion of the diffusion operator in the expression of the optimal test norm. The integration error plays a significant role in the convergence analysis. The discussions on integration rules for more general kernels and higher dimensions are also critical to guarantee the performance of the PG method as predicted by the theory. 
%The proposed test space norm $\|\cdot \|_{\tn{app},V}$ is only sub-optimal and its performance deteriorates for large $\delta$.
Moreover, the perturbation, $\epsilon$, of the diffusion is limited to $0.01$ and the polynomial order in test space is chosen ad hoc. Reducing $\epsilon$ to smaller values and increasing $\delta$ require better practical approximations of the optimal test space norm and more sophisticated  strategies, such as the double adaptivity algorithm \cite{demkowicz2020double}, to choose the polynomial order in the test space so as to guarantee numerical stability.

\begin{comment}
\begin{figure}[htb!]
\centering
\scalebox{0.4}{\includegraphics{nonlocaltolocal.pdf}}
\caption{Numerical solution for manufactured local solution, $\epsilon=0.01$, $\delta=0.01$, after $12$ steps of adaptive $h$-refinement}. 
\label{fig:DiscreteEven}
\end{figure}

\begin{table}[htb!] 
\begin{center} 
\caption{Approximation of the nonlocal problem to the corresponding local limit. Abbreviation: "app" and "eng" represent the corresponding approximation of the optimal test space norm $\| \cdot \|_{\tn{app}, V}$ and $\| \cdot \|_{\tn{eng},V } $, respectively.} \label{table:localproblem}
\begin{tabular}{ccccccccc} 
\toprule
 & \multicolumn{2}{c}{$\delta =0.05$} & \multicolumn{2}{c}{$\delta =0.01$} & \multicolumn{2}{c}{$\delta =0.005$} & \multicolumn{2}{c}{$\delta =0.001$}  \\\midrule
%{Test norm}  & {$\| \cdot \|_{\tn{app},V}$}  & {$\| \cdot \|_{\tn{eng},V}$} & {$\| \cdot \|_{\tn{app}}$}  & {$\| \cdot \|_{\tn{eng},V}$} & {$\| \cdot \|_{\tn{app},V}$}  & {$\| \cdot \|_{\tn{eng}}$}  & {$\| \cdot \|_{\tn{app},V}$}  & {$\| \cdot \|_{\tn{eng},V}$} \\ 
{Test norm}  & app  & eng & app  & eng & app & eng  & app & eng \\ 
{$\#$ of adaptive $h$-refinement}  & 15 & 15 & 12 & 4 & 8 & 5 & 9 & 11\\ 
{Relative error $(\times 10^{-4})$}  & {$5.21$}  & {$7.63$} & {$0.98$} & {$0.31$} & {$0.82 $}  & {$0.28$} & {$0.48$} & {$0.85$}\\ \bottomrule
\end{tabular}
\end{center}
\end{table}
\end{comment}

\section*{Acknowledgements}
Yu Leng and John T. Foster were in part supported by SNL:LDRD academic
alliance program. Xiaochuan Tian was partially supported by
the National Science Foundation grant DMS-2111608. Leszek Demkowicz was partially supported by NSF grant NO.1819101. Leszek Demkowicz and John T. Foster were in part supported by ARO grant NO.W911NF1510552.

\appendix

\section{Manufactured smooth solution} \label{sec:appendix}
Previous works \cite{Tian2014a,leng2021asymptotically} only reported convergence results in $L^2$ norm. To compare with them and to verify our results, we present additional results of the numerical examples in \cref{subsec:egnonlocal} but the relative errors are measured in $L^2$ norm. 
\subsection{Nonlocal limit}
This section corresponds to \cref{subsec:nonlocallimit} for uniform $h$- and $p$-, and adaptive $h$-refinements.

\subsubsection{Uniform  $h$-refinements}

\begin{table}[H] 
\begin{center}
\caption{Relative error in $\| \cdot \|_{L^2(\Omega)}$ and convergence rates using $\| \cdot \|_{\textnormal{app},V}$ for the test norm to solve \cref{eqn:smoothsolution}.  Uniform $h$-refinements and $\delta p=2$. This table corresponds to \cref{tbl:happopt1}. } %\label{tbl:happopt1}
\begin{tabular}{ccccc}  
\toprule
$0.1 \times h$ & {$\delta =0.1$} & {$\delta =0.01$} & {$\delta =0.001$} & {$\delta=0.0001$}   \\\midrule
%$0.1 \times h$ & {$\| \cdot \|_{\mathcal{S_\delta}}$ (Rate)} & {$\delta =0.01$} & {$\delta =0.001$}  \\\midrule
{$2^{1}$}  & {$7.05 \times 10^{-2}(--)$}  & {$8.76 \times 10^{-2}(--)$} & {$8.90 \times 10^{-2}(--)$}  & {$8.91 \times 10^{-2}(--)$} \\ 
{$2^{0}$}  & {$1.62 \times 10^{-2}(1.82)$}  & {$2.03 \times 10^{-2}(1.80)$} & {$2.20 \times 10^{-2}(1.72)$}  & {$2.23 \times 10^{-2}(1.71)$} \\ 
{$2^{-1}$}  & {$3.02 \times 10^{-3}(2.24)$}  & {$4.41 \times 10^{-3}(2.04)$} & {$5.54 \times 10^{-3}(1.85)$} & {$5.66 \times 10^{-3}(1.84)$} \\ 
{$2^{-2}$}  & {$6.64 \times 10^{-4}(2.11)$}  & {$8.89 \times 10^{-4}(2.23)$} & {$1.37 \times 10^{-3}(1.95)$} & {$1.43 \times 10^{-3}(1.91)$} \\ 
{$2^{-3}$}  & {$1.56 \times 10^{-4}(2.06)$}  & {$1.88 \times 10^{-4}(2.20)$} & {$3.37 \times 10^{-4}(1.98)$} & {$3.53 \times 10^{-4}(1.98)$}\\ 
{$2^{-4}$}  & {$3.77 \times 10^{-5}(2.03)$}  & {$4.14 \times 10^{-5}(2.16)$} & {$8.12 \times 10^{-5}(2.03)$} & {$8.83 \times 10^{-5}(1.98)$}\\ 
{$2^{-5}$}  & {$9.27 \times 10^{-6}(2.01)$}  & {$9.55 \times 10^{-6}(2.11)$} & {$1.89 \times 10^{-5}(2.09)$} & {$2.20 \times 10^{-5}(1.99)$} \\ 
{$2^{-6}$}  & {$2.30 \times 10^{-6}(2.01)$}  & {$2.32 \times 10^{-6}(2.04)$} & {$4.18 \times 10^{-6}(2.17)$} & {$5.42 \times 10^{-6}(2.02)$} \\ 
{$2^{-7}$}  & {$5.73 \times 10^{-7}(2.00)$}  & {$5.74 \times 10^{-7}(2.01)$} & {$9.42 \times 10^{-7}(2.15)$} & {$1.36 \times 10^{-6}(1.99)$} \\  \bottomrule
\end{tabular}
\end{center}
\end{table}

\begin{table}[H] 
\begin{center}
\caption{Relative error in $\| \cdot \|_{L^2(\Omega)}$ and convergence rates using $\| \cdot \|_{\textnormal{eng},V}$ for the test norm to solve \cref{eqn:smoothsolution}. Uniform $h$-refinements and $\delta p=2$. This table corresponds to \cref{tbl:happopt2}. } %\label{tbl:happopt2}
\begin{tabular}{ccccc} \toprule
$0.1 \times h$ & {$\delta =0.1$} & {$\delta =0.01$} & {$\delta =0.001$} & {$\delta=0.0001$} \\\midrule
{$2^{1}$}  & {$8.69 \times 10^{-2}(--)$}  & {$1.05 \times 10^{-1}(--)$} & {$1.06 \times 10^{-1}(--)$}  & {$1.07 \times 10^{-1}(--)$} \\ 
{$2^{0}$}  & {$1.68 \times 10^{-2}(2.03)$}  & {$2.69 \times 10^{-2}(1.68)$} & {$2.78 \times 10^{-2}(1.66)$}  & {$2.79 \times 10^{-2}(1.65)$} \\ 
{$2^{-1}$}  & {$3.11 \times 10^{-3}(2.26)$}  & {$6.44 \times 10^{-3}(1.91)$} & {$7.04 \times 10^{-3}(1.84)$} & {$7.03 \times 10^{-3}(1.84)$} \\ 
{$2^{-2}$}  & {$6.71 \times 10^{-4}(2.13)$}  & {$1.44 \times 10^{-3}(2.08)$} & {$1.77 \times 10^{-3}(1.92)$} & {$1.71 \times 10^{-3}(1.96)$} \\ 
{$2^{-3}$}  & {$1.56 \times 10^{-4}(2.07)$}  & {$2.86 \times 10^{-4}(2.29)$} & {$4.34 \times 10^{-4}(1.99)$} & {$4.43 \times 10^{-4}(1.92)$}\\ 
{$2^{-4}$}  & {$3.77 \times 10^{-5}(2.03)$}  & {$5.15 \times 10^{-5}(2.45)$} & {$1.00 \times 10^{-4}(2.10)$} & {$1.14 \times 10^{-4}(1.94)$}\\ 
{$2^{-5}$}  & {$9.27 \times 10^{-6}(2.01)$}  & {$1.04 \times 10^{-5}(2.30)$} & {$2.25 \times 10^{-5}(2.15)$} & {$2.71 \times 10^{-5}(2.06)$} \\ 
{$2^{-6}$}  & {$2.30 \times 10^{-6}(2.01)$}  & {$2.38 \times 10^{-6}(2.12)$} & {$4.55 \times 10^{-6}(2.30)$} & {$6.48 \times 10^{-6}(2.06)$} \\ 
{$2^{-7}$}  & {$5.73 \times 10^{-7}(2.00)$}  & {$5.78 \times 10^{-7}(2.04)$} & {$9.27 \times 10^{-7}(2.29)$} & {$1.66 \times 10^{-6}(1.97)$} \\  \bottomrule
\end{tabular}
\end{center}
\end{table}

\subsubsection{Uniform  $p$-refinements}

\begin{table}[H]
\begin{center}
\caption{Relative error in $\| \cdot \|_{L^2(\Omega)}$ and convergence rates using $\| \cdot \|_{\textnormal{app},V}$ for the test norm to solve \cref{eqn:smoothsolution}. Uniform $p$-refinements and $\delta p = 2$. This table corresponds to \cref{tbl:2pappopt1}. } %\label{tbl:2pappopt1}
\begin{tabular}{ccccc} 
\toprule
$N$ & {$\delta =0.1$} & {$\delta =0.01$} & {$\delta =0.001$}  & {$\delta =0.0001$} \\\midrule
{$4$}  & {$7.05 \times 10^{-2}(--)$}  & {$8.76 \times 10^{-2}(--)$} & {$1.29 \times 10^{-1}(--)$}  & {$1.29 \times 10^{-1}(--)$} \\ 
{$9$}  & {$4.59 \times 10^{-3}(3.37)$}  & {$4.01 \times 10^{-3}(3.80)$} & {$6.07 \times 10^{-3}(3.77)$} & {$6.07 \times 10^{-3}(3.77)$} \\ 
{$14$}  & {$9.13 \times 10^{-5}(8.87)$}  & {$1.06 \times 10^{-4}(8.23)$} & {$3.12 \times 10^{-4}(6.72)$} & {$3.12 \times 10^{-4}(6.72)$}\\ 
{$19$}  & {$1.87 \times 10^{-6}(12.73)$}  & {$2.0414 \times 10^{-6}(12.94)$} & {$2.05 \times 10^{-6}(16.46)$} & {$2.05 \times 10^{-6}(16.46)$} \\ \bottomrule
\end{tabular}
\end{center}
\end{table}

\begin{table}[H]
\begin{center}
\caption{Relative error in $\| \cdot \|_{L^2(\Omega)}$ and convergence rates using $\| \cdot \|_{\textnormal{app},V}$ for the test norm to solve \cref{eqn:smoothsolution}. Uniform $p$-refinements and $\delta p = 3$.  This table corresponds to \cref{tbl:3pappopt1}.} %\label{tbl:3pappopt1}
\begin{tabular}{ccccc} 
\toprule
$N$ & {$\delta =0.1$} & {$\delta =0.01$} & {$\delta =0.001$} & {$\delta =0.0001$}  \\\midrule
{$4$}  & {$6.87 \times 10^{-2}(--)$}  & {$8.62 \times 10^{-2}(--)$} & {$1.26 \times 10^{-1}(--)$} & {$1.26 \times 10^{-1}(--)$}\\ 
{$9$}  & {$4.67 \times 10^{-3}(3.32)$}  & {$4.00 \times 10^{-3}(3.79)$} & {$5.84 \times 10^{-3}(3.79)$} & {$5.83 \times 10^{-3}(3.79)$} \\ 
{$14$}  & {$9.18 \times 10^{-5}(8.89)$}  & {$1.09 \times 10^{-4}(8.16)$} & {$3.02 \times 10^{-4}(6.70)$} & {$3.02 \times 10^{-4}(6.70)$} \\ 
{$19$}  & {$1.88 \times 10^{-6}(12.74)$}  & {$2.02 \times 10^{-6}(13.06)$} & {$2.03 \times 10^{-6}(16.38)$} & {$2.03 \times 10^{-6}(16.37)$} \\ \bottomrule
\end{tabular}
\end{center}
\end{table}

\begin{table}[H]
\begin{center}
\caption{Relative error in $\| \cdot \|_{L^2(\Omega)}$ and convergence rates using $\| \cdot \|_{\textnormal{eng},V}$ for the test norm to solve \cref{eqn:smoothsolution}. Uniform $p$-refinements and $\delta p = 2$.  This table corresponds to \cref{tbl:2pappopt2}. } %\label{tbl:2pappopt2}
\begin{tabular}{ccccc} 
\toprule
$N$ & {$\delta =0.1$} & {$\delta =0.01$} & {$\delta =0.001$} & {$\delta =0.0001$} \\\midrule
{$4$}  & {$8.69 \times 10^{-2}(--)$}  & {$1.05 \times 10^{-1}(--)$} & {$1.54 \times 10^{-1}(--)$}  & {$1.54 \times 10^{-1}(--)$}\\ 
{$9$}  & {$5.06 \times 10^{-3}(3.51)$}  & {$7.22 \times 10^{-3}(3.30)$} & {$8.99 \times 10^{-3}(3.50)$} & {$8.98 \times 10^{-3}(3.50)$}\\ 
{$14$}  & {$9.71 \times 10^{-5}(8.95)$}  & {$1.72 \times 10^{-4}(8.45)$} & {$5.01 \times 10^{-4}(6.54)$} & {$5.00 \times 10^{-4}(6.54)$} \\ 
{$19$}  & {$1.85 \times 10^{-6}(12.97)$}  & {$1.74 \times 10^{-6}(14.87)$} & {$1.84 \times 10^{-6}(18.36)$} & {$1.84 \times 10^{-6}(18.35)$}\\ \bottomrule
\end{tabular}
\end{center}
\end{table}

\begin{table}[H]
\begin{center}
\caption{Relative error in $\| \cdot \|_{L^2(\Omega)}$ and convergence rates using  $\| \cdot \|_{\textnormal{eng},V}$ for the test norm to solve \cref{eqn:smoothsolution}. Uniform $p$-refinements and $\delta p = 3$. This table corresponds to \cref{tbl:3pappopt2}.} %\label{tbl:3pappopt2}
\begin{tabular}{ccccc} 
\toprule
$N$ & {$\delta =0.1$} & {$\delta =0.01$} & {$\delta =0.001$} & {$\delta =0.0001$} \\\midrule
{$4$}  & {$8.52 \times 10^{-2}(--)$}  & {$1.04 \times 10^{-1}(--)$} & {$1.54 \times 10^{-1}(--)$} & {$1.54 \times 10^{-1}(--)$} \\ 
{$9$}  & {$5.01 \times 10^{-3}(3.49)$}  & {$7.20 \times 10^{-3}(3.30)$} & {$8.99 \times 10^{-3}(3.50)$} & {$8.98 \times 10^{-3}(3.50)$} \\ 
{$14$}  & {$9.76 \times 10^{-5}(8.91)$}  & {$1.72 \times 10^{-4}(8.45)$} & {$5.01 \times 10^{-4}(6.54)$} & {$5.00 \times 10^{-4}(6.54)$} \\ 
{$19$}  & {$1.85 \times 10^{-6}(12.99)$}  & {$1.84 \times 10^{-6}(14.85)$} & {$1.84 \times 10^{-6}(18.35)$} & {$1.84 \times 10^{-6}(18.35)$} \\ \bottomrule
\end{tabular}
\end{center}
\end{table}

\subsubsection{Adaptive $h$-refinements}

\begin{figure}[H]%htb
%\captionsetup[subfigure]{font=scriptsize, labelfont=scriptsize}
\begin{subfigure}[b]{0.5\textwidth} 
\centering
\scalebox{0.5}{\includegraphics{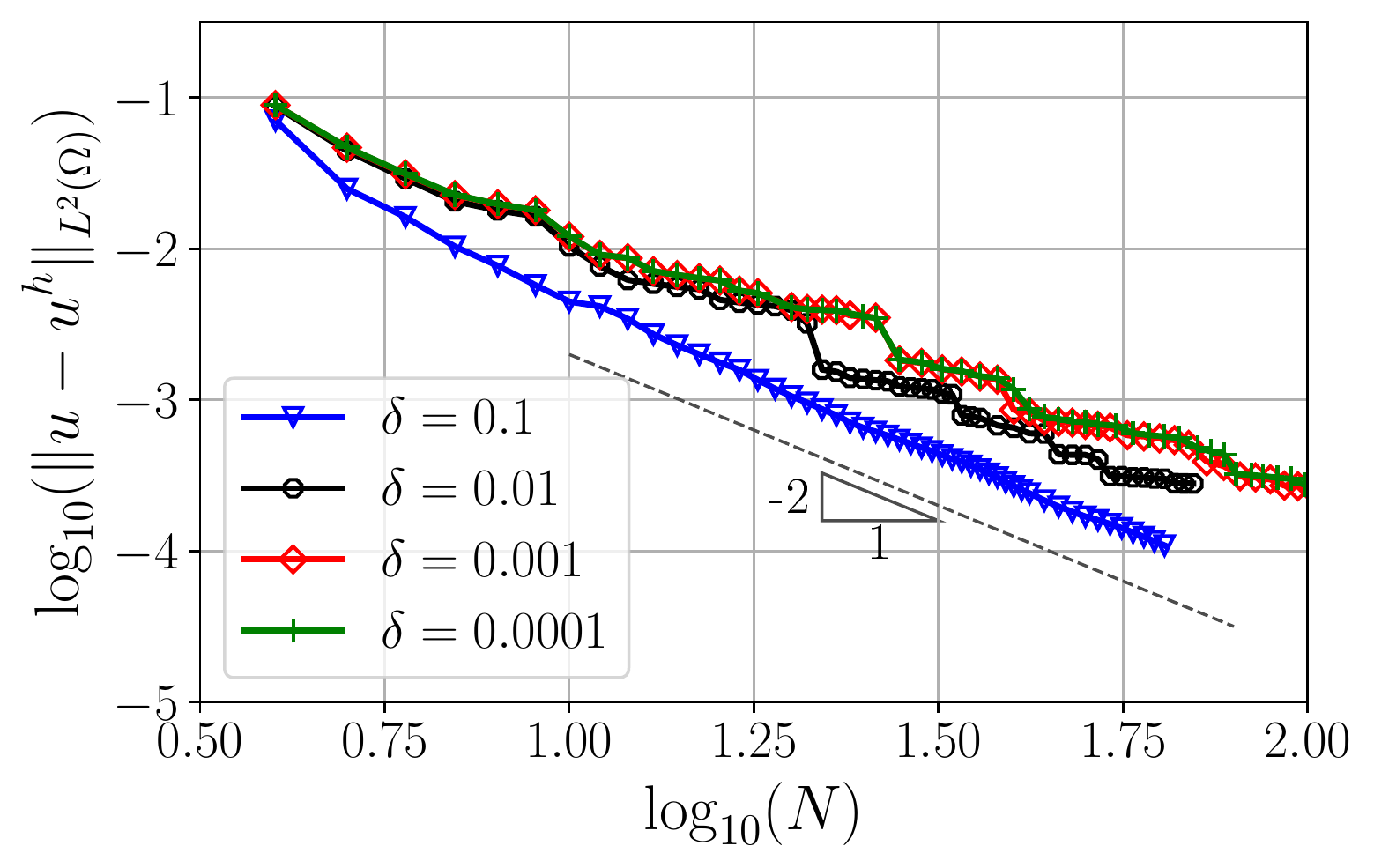}} 
\caption{$\| \cdot \|_{\tn{app}, V}$ for the test norm}
\label{fig:l2hada1}
\end{subfigure}
\begin{subfigure}[b]{0.5\textwidth} 
\centering
\scalebox{0.5}{\includegraphics{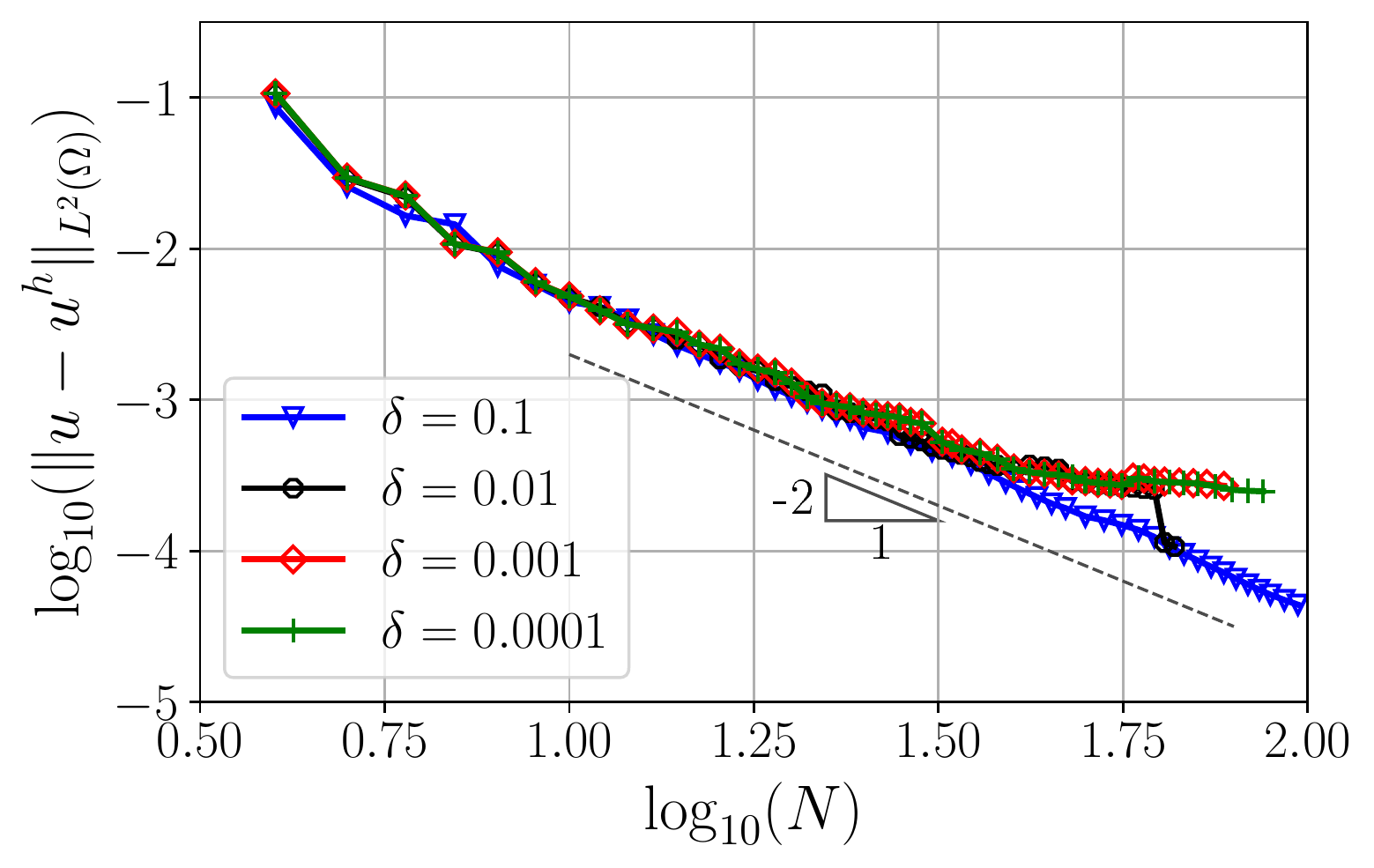}}
\caption{$\| \cdot \|_{\tn{eng}, V}$ for the test norm} 
\label{fig:l2hada2}
\end{subfigure}
\caption{Convergence profile (relative error) using  $\| \cdot \|_{\tn{app}, V}$ and  $\| \cdot \|_{\tn{eng}, V}$ for the test norms to solve \cref{eqn:smoothsolution}. Adaptive $h$-refinements and $\delta p=2$. This figure corresponds to \cref{fig:h-adaptive}. } 
\label{fig:l2ada1}
\end{figure}

\subsection{Local limit}
This section corresponds to \cref{subsec:locallimit}.

\begin{table}[H] 
\begin{center}
\caption{Relative error in $\| \cdot \|_{L^2(\Omega)}$ and convergence rates using $\| \cdot \|_{\textnormal{opt},V}$ for the test norm to solve \cref{eqn:localproblem}. Uniform $h$-refinements. Uniform $p$-refinements and $\delta p = 2$. This table corresponds to \cref{tbl:hlocalopt}. } %\label{tbl:hlocalopt}
\begin{tabular}{ccccc} \toprule
$0.1 \times h$ & {$\delta =h$} & {$\delta =2h$} & {$\delta =h^2$} & {$\delta=\sqrt{h}$} \\\midrule
{$2^{1}$}  & {$2.28 \times 10^{-1}(--)$}  & {$1.59 \times 10^{0}(--)$} & {$7.97 \times 10^{-2}(--)$}  & {$7.40 \times 10^{0}(--)$} \\ 
{$2^{0}$}  & {$4.68 \times 10^{-2}(1.95)$}  & {$2.25 \times 10^{-1}(2.41)$} & {$2.02 \times 10^{-2}(1.69)$}  & {$2.82 \times 10^{0}(1.19)$} \\ 
{$2^{-1}$}  & {$9.73 \times 10^{-3}(2.10)$}  & {$3.51 \times 10^{-2}(2.49)$} & {$5.30 \times 10^{-3}(1.79)$} & {$9.88 \times 10^{-1}(1.40)$} \\ 
{$2^{-2}$}  & {$1.86 \times 10^{-3}(2.30)$}  & {$7.29 \times 10^{-3}(2.18)$} & {$1.39 \times 10^{-3}(1.86)$} & {$2.98 \times 10^{-1}(1.67)$} \\ 
{$2^{-3}$}  & {$3.42 \times 10^{-4}(2.40)$}  & {$1.63 \times 10^{-3}(2.13)$} & {$3.51 \times 10^{-4}(1.95)$} & {$1.12 \times 10^{-1}(1.39)$}\\ 
{$2^{-4}$}  & {$7.05 \times 10^{-5}(2.26)$}  & {$3.64 \times 10^{-4}(2.14)$} & {$9.16 \times 10^{-5}(1.92)$} & {$4.44 \times 10^{-2}(1.32)$}\\ 
{$2^{-5}$}  & {$1.61 \times 10^{-5}(2.12)$}  & {$8.68 \times 10^{-5}(2.06)$} & {$2.30 \times 10^{-5}(1.98)$} & {$1.92 \times 10^{-2}(1.21)$} \\ 
{$2^{-6}$}  & {$3.88 \times 10^{-6}(2.05)$}  & {$2.14 \times 10^{-5}(2.01)$} & {$5.77 \times 10^{-6}(1.99)$} & {$8.77 \times 10^{-3}(1.13)$} \\ 
{$2^{-7}$}  & {$9.52 \times 10^{-7}(2.02)$}  & {$5.34 \times 10^{-6}(2.00)$} & {$1.44 \times 10^{-6}(2.00)$} & {$4.17 \times 10^{-3}(1.07)$} \\  \bottomrule
\end{tabular}
\end{center}
\end{table}

\begin{table}[hbt!] 
\begin{center}
\caption{Relative error in $\| \cdot \|_{L^2(\Omega)}$ and convergence rates using $\| \cdot \|_{\textnormal{eng},V}$ for the test norm to solve \cref{eqn:localproblem}. Uniform $h$-refinements. Uniform $p$-refinements and $\delta p = 2$. This table corresponds to \cref{tbl:hlocaleng}. } %\label{tbl:hlocaleng}
\begin{tabular}{ccccc} \toprule
$0.1 \times h$ & {$\delta =h$} & {$\delta =2h$} & {$\delta =h^2$} & {$\delta=\sqrt{h}$} \\\midrule
{$2^{1}$}  & {$1.56 \times 10^{-1}(--)$}  & {$1.27 \times 10^{0}(--)$} & {$1.01 \times 10^{-1}(--)$}  & {$7.04 \times 10^{0}(--)$} \\ 
{$2^{0}$}  & {$3.15 \times 10^{-2}(1.97)$}  & {$1.83 \times 10^{-1}(2.39)$} & {$2.69 \times 10^{-2}(1.63)$}  & {$2.41 \times 10^{0}(1.32)$} \\ 
{$2^{-1}$}  & {$6.42 \times 10^{-3}(2.13)$}  & {$2.78 \times 10^{-2}(2.52)$} & {$6.97 \times 10^{-3}(1.81)$} & {$9.26 \times 10^{-1}(1.28)$} \\ 
{$2^{-2}$}  & {$1.37 \times 10^{-3}(2.15)$}  & {$5.57 \times 10^{-3}(2.23)$} & {$1.79 \times 10^{-3}(1.89)$} & {$2.88 \times 10^{-1}(1.63)$} \\ 
{$2^{-3}$}  & {$3.21 \times 10^{-4}(2.06)$}  & {$1.34 \times 10^{-3}(2.02)$} & {$4.49 \times 10^{-4}(1.96)$} & {$1.09 \times 10^{-1}(1.38)$}\\ 
{$2^{-4}$}  & {$7.92 \times 10^{-5}(2.00)$}  & {$3.41 \times 10^{-4}(1.96)$} & {$9.97 \times 10^{-5}(2.15)$} & {$4.34 \times 10^{-2}(1.31)$}\\ 
{$2^{-5}$}  & {$1.98 \times 10^{-5}(1.99)$}  & {$8.69 \times 10^{-5}(1.96)$} & {$2.64 \times 10^{-5}(1.91)$} & {$1.88 \times 10^{-2}(1.20)$} \\ 
{$2^{-6}$}  & {$4.95 \times 10^{-6}(1.99)$}  & {$2.20 \times 10^{-5}(1.98)$} & {$6.83 \times 10^{-6}(1.95)$} & {$8.65 \times 10^{-3}(1.12)$} \\ 
{$2^{-7}$}  & {$1.24 \times 10^{-6}(2.00)$}  & {$5.52 \times 10^{-6}(1.99)$} & {$1.74 \times 10^{-6}(1.97)$} & {$4.13 \times 10^{-3}(1.07)$} \\  \bottomrule
\end{tabular}
\end{center}
\end{table}

\section{Manufactured solution with a sharp gradient transition}
Additional convergence results  measured in $L^2$ norm of the numerical examples in \cref{subsec:egbl} are shown in this section. 
\begin{figure}[htb!]%htb
%\captionsetup[subfigure]{font=scriptsize, labelfont=scriptsize}
\begin{subfigure}[b]{0.5\textwidth} 
\centering
\scalebox{0.5}{\includegraphics{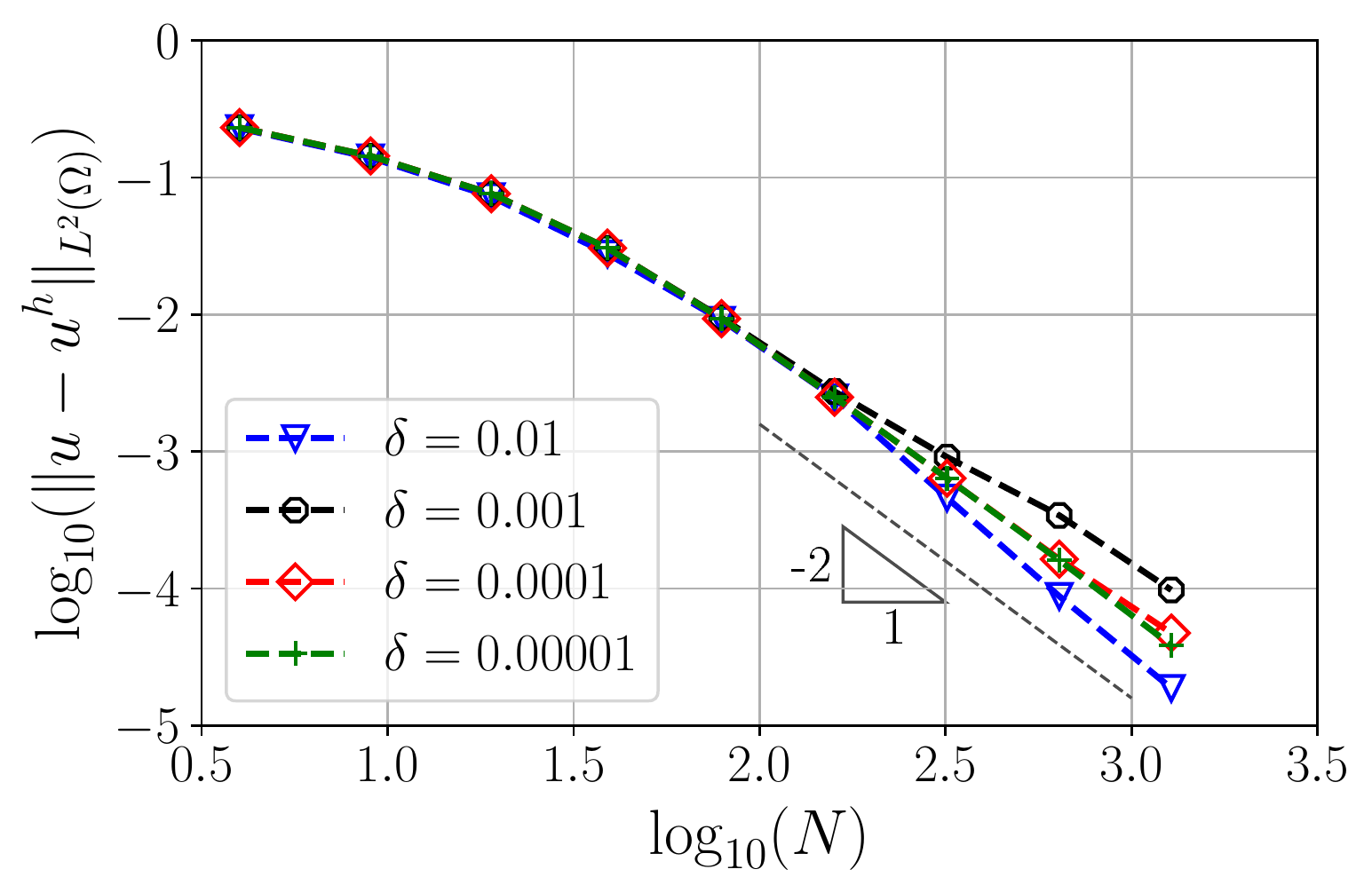}} 
\caption{$\| \cdot \|_{\tn{app}, V}$ for the test norm} 
\label{fig:l2hrefineopt2}
\end{subfigure}
\begin{subfigure}[b]{0.5\textwidth} 
\centering
\scalebox{0.5}{\includegraphics{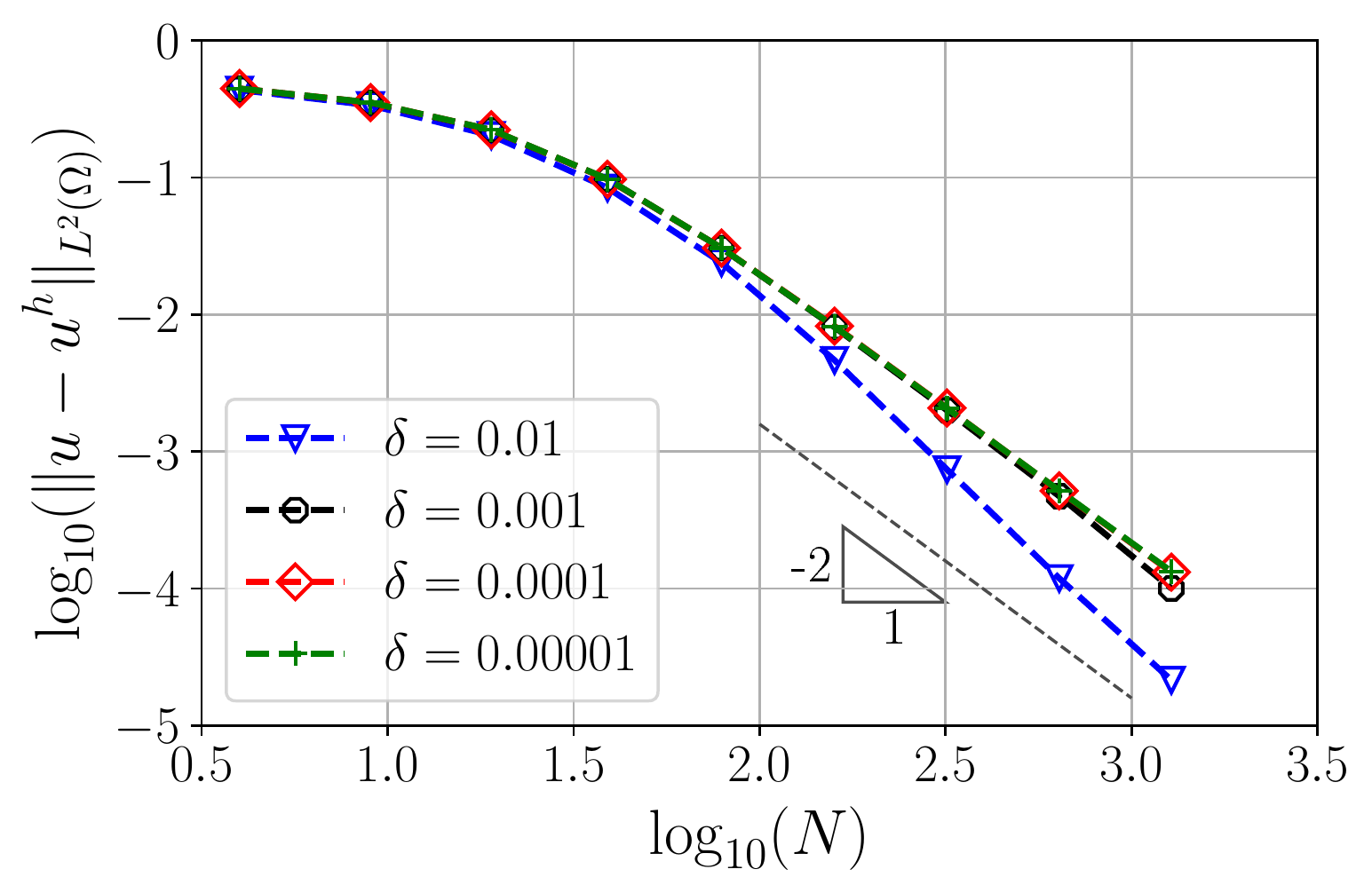}}
\caption{$\| \cdot \|_{\tn{eng}, V}$ for the test norm} 
\label{fig:l2hrefineeng2}
\end{subfigure}
\caption{Convergence profile (relative error) using  $\| \cdot \|_{\tn{app}, V}$ and  $\| \cdot \|_{\tn{eng}, V}$ for the test norms to solve the nonlocal convection-dominated diffusion problem with the manufactured solution given in \cref{eqn:sharpgradientsolution}. Uniform $h$-refinements and $\delta p=6$. This figure corresponds to \cref{fig:h-refinement2}. } 
\label{fig:l2hrefinement2}
\end{figure}

\begin{figure}[htb!]%htb
%\captionsetup[subfigure]{font=scriptsize, labelfont=scriptsize}
\begin{subfigure}[b]{0.5\textwidth} 
\centering
\scalebox{0.5}{\includegraphics{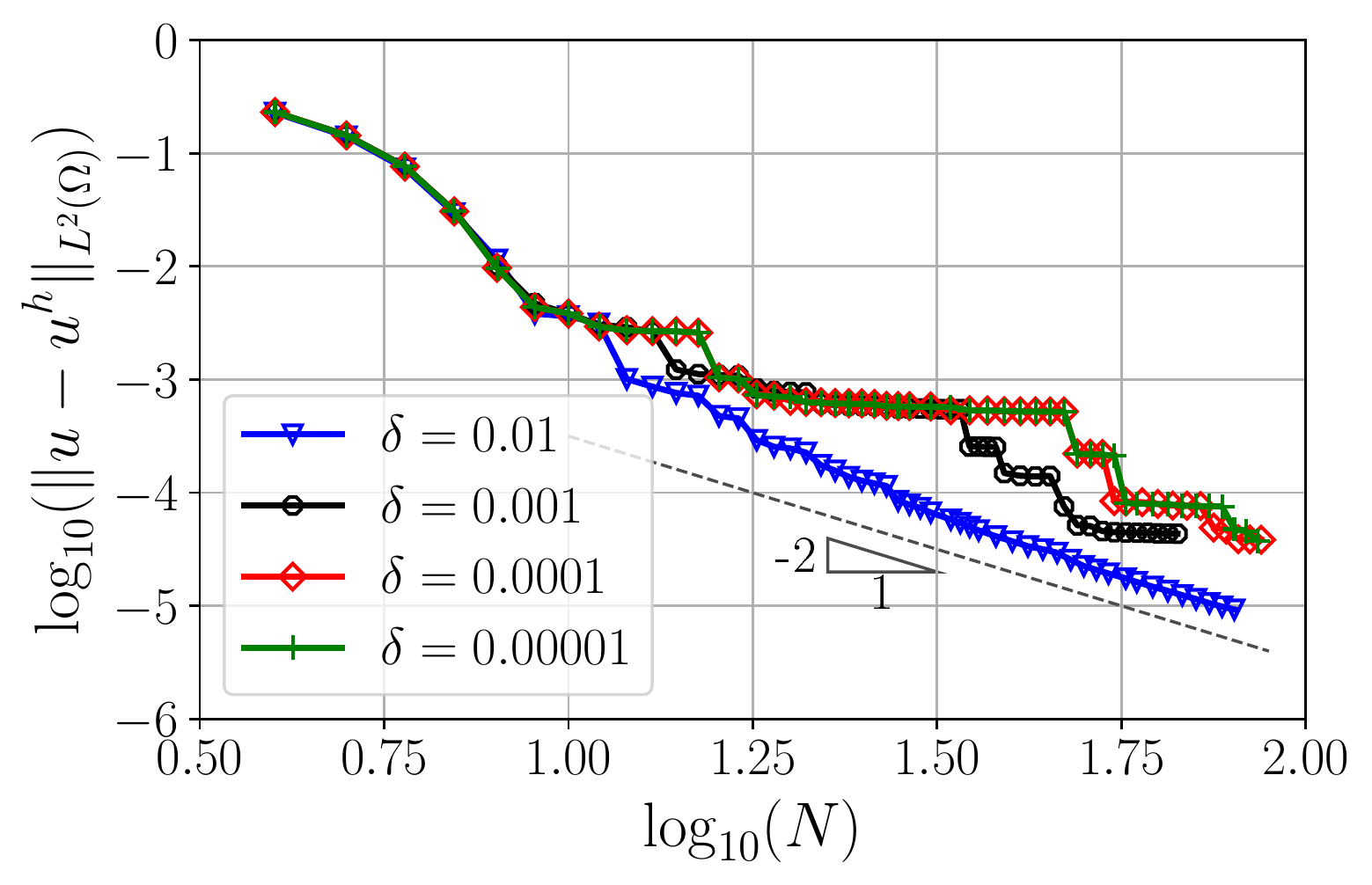}} 
\caption{$\| \cdot \|_{\tn{app}, V}$ for the test norm} 
\label{fig:l2opt2}
\end{subfigure}
\begin{subfigure}[b]{0.5\textwidth} 
\centering
\scalebox{0.5}{\includegraphics{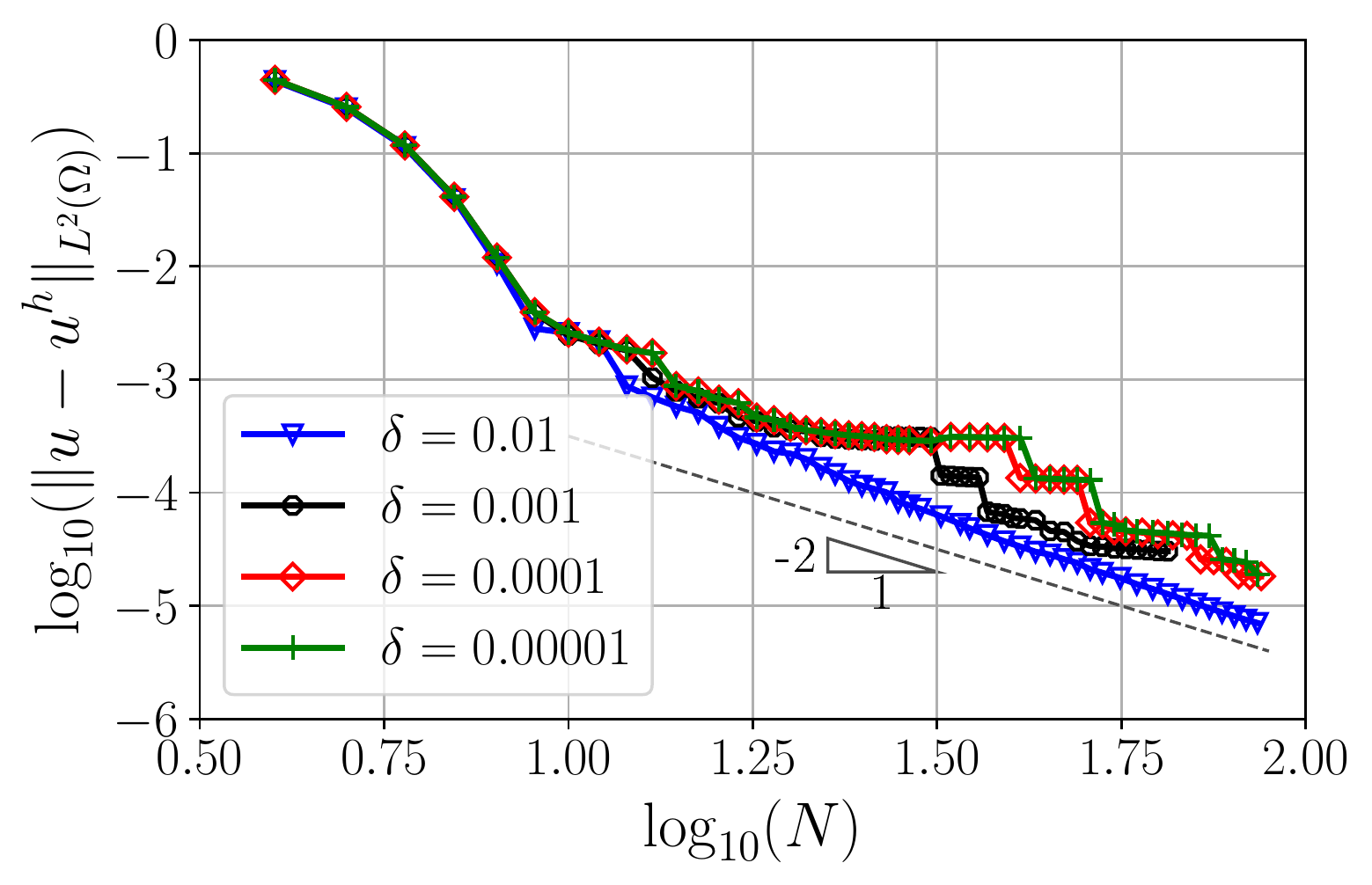}}
\caption{$\| \cdot \|_{\tn{eng}, V}$ for the test norm} 
\label{fig:l2eng2}
\end{subfigure}
\caption{Convergence profile (relative error) using  $\| \cdot \|_{\tn{app}, V}$ and  $\| \cdot \|_{\tn{eng}, V}$ for the test norms to solve the nonlocal convection-dominated diffusion problem with the manufactured solution given in \cref{eqn:sharpgradientsolution}. Adaptive $h$-refinements and $\delta p=6$. This figure corresponds to \cref{fig:h-adaptive2}.
%\tcr{[Can you think of a reason why the $L^2$ error behaves more strangely than the $\| \cdot\|_{\cS_\del(\Om)}$ error? Can you try a more accurate way to compute $L^2$ norm?]} 
}
\label{fig:l2hadaptive2}
\end{figure}

\bibliographystyle{abbrv}
\bibliography{txc}

\begin{thebibliography}{10}

\bibitem{Andreu2010}
F.~Andreu-Vaillo, J.~Mazn, J.~Rossi, and J.~Toledo-Melero.
\newblock Nonlocal diffusion problems.
\newblock Mathematical Surveys and Monographs, Providence, RI, 2010.

\bibitem{bazant2002nonlocal}
Z.~P. Ba{\u z}ant and M.~Jir{\'a}sek.
\newblock Nonlocal integral formulations of plasticity and damage: survey of
  progress.
\newblock {\em Journal of Engineering Mechanics}, 128(11):1119--1149, 2002.

\bibitem{bessa2014meshfree}
M.~Bessa, J.~Foster, T.~Belytschko, and W.~K. Liu.
\newblock A meshfree unification: reproducing kernel peridynamics.
\newblock {\em Computational Mechanics}, 53(6):1251--1264, 2014.

\bibitem{bourgain2001another}
J.~Bourgain, H.~Brezis, and P.~Mironescu.
\newblock Another look at sobolev spaces.
\newblock 2001.

\bibitem{brezzi1989two}
F.~Brezzi, L.~D. Marini, and P.~Pietra.
\newblock Two-dimensional exponential fitting and applications to
  drift-diffusion models.
\newblock {\em SIAM Journal on Numerical Analysis}, 26(6):1342--1355, 1989.

\bibitem{broersen2014robust}
D.~Broersen and R.~Stevenson.
\newblock A robust petrov--galerkin discretisation of convection--diffusion
  equations.
\newblock {\em Computers \& Mathematics with Applications}, 68(11):1605--1618,
  2014.

\bibitem{brooks1982streamline}
A.~N. Brooks and T.~J. Hughes.
\newblock Streamline upwind/petrov-galerkin formulations for convection
  dominated flows with particular emphasis on the incompressible navier-stokes
  equations.
\newblock {\em Computer methods in applied mechanics and engineering},
  32(1-3):199--259, 1982.

\bibitem{buades2010image}
A.~Buades, B.~Coll, and J.-M. Morel.
\newblock Image denoising methods. a new nonlocal principle.
\newblock {\em SIAM review}, 52(1):113--147, 2010.

\bibitem{bucur2016nonlocal}
C.~Bucur and E.~Valdinoci.
\newblock {\em Nonlocal diffusion and applications}, volume~20.
\newblock Springer, 2016.

\bibitem{burman2004edge}
E.~Burman and P.~Hansbo.
\newblock Edge stabilization for galerkin approximations of
  convection--diffusion--reaction problems.
\newblock {\em Computer methods in applied mechanics and engineering},
  193(15-16):1437--1453, 2004.

\bibitem{chen2011continuous}
X.~Chen and M.~Gunzburger.
\newblock Continuous and discontinuous finite element methods for a
  peridynamics model of mechanics.
\newblock {\em Computer Methods in Applied Mechanics and Engineering},
  200(9-12):1237--1250, 2011.

\bibitem{chen2015peridynamic}
Z.~Chen and F.~Bobaru.
\newblock Peridynamic modeling of pitting corrosion damage.
\newblock {\em Journal of the Mechanics and Physics of Solids}, 78:352--381,
  2015.

\bibitem{cohen2012adaptivity}
A.~Cohen, W.~Dahmen, and G.~Welper.
\newblock Adaptivity and variational stabilization for convection-diffusion
  equations.
\newblock {\em ESAIM: Mathematical Modelling and Numerical Analysis},
  46(5):1247--1273, 2012.

\bibitem{DDGG20}
M.~D'Elia, Q.~Du, C.~Glusa, M.~Gunzburger, X.~Tian, and Z.~Zhou.
\newblock Numerical methods for nonlocal and fractional models.
\newblock {\em Acta Numerica}, 29:1--124, 2020.

\bibitem{Delia2017}
M.~D'Elia, Q.~Du, M.~Gunzburger, and R.~Lehoucq.
\newblock Nonlocal convection-diffusion problems on bounded domains and
  finite-range jump processes.
\newblock {\em Computational Methods in Applied Mathematics}, 17(4):707--722,
  2017.

\bibitem{d2020cookbook}
M.~D'Elia, M.~Gunzburger, and C.~Vollmann.
\newblock A cookbook for finite element methods for nonlocal problems,
  including quadrature rules and approximate euclidean balls.
\newblock {\em arXiv preprint arXiv:2005.10775}, 2020.

\bibitem{demkowicz2020double}
L.~Demkowicz, T.~F{\"u}hrer, N.~Heuer, and X.~Tian.
\newblock The double adaptivity paradigm:({H}ow to circumvent the discrete
  inf--sup conditions of {B}abu{\v{s}}ka and {B}rezzi).
\newblock {\em Computers \& Mathematics with Applications}, 2020.

\bibitem{demkowicz2010class}
L.~Demkowicz and J.~Gopalakrishnan.
\newblock A class of discontinuous {P}etrov-{G}alerkin methods. part i: The
  transport equation.
\newblock {\em Computer Methods in Applied Mechanics and Engineering},
  199(23-24):1558--1572, 2010.

\bibitem{demkowicz2011class}
L.~Demkowicz and J.~Gopalakrishnan.
\newblock A class of discontinuous {P}etrov-{G}alerkin methods. ii. optimal
  test functions.
\newblock {\em Numerical Methods for Partial Differential Equations},
  27(1):70--105, 2011.

\bibitem{demkowicz2017discontinuous}
L.~Demkowicz and J.~Gopalakrishnan.
\newblock Discontinuous {P}etrov-{G}alerkin (dpg) method.
\newblock {\em Encyclopedia of Computational Mechanics Second Edition}, pages
  1--15, 2017.

\bibitem{demkowicz2012class}
L.~Demkowicz, J.~Gopalakrishnan, and A.~H. Niemi.
\newblock A class of discontinuous petrov--galerkin methods. part iii:
  Adaptivity.
\newblock {\em Applied numerical mathematics}, 62(4):396--427, 2012.

\bibitem{demkowicz2014overview}
L.~F. Demkowicz and J.~Gopalakrishnan.
\newblock An overview of the discontinuous {P}etrov {G}alerkin method.
\newblock In {\em Recent developments in discontinuous Galerkin finite element
  methods for partial differential equations}, pages 149--180. Springer, 2014.

\bibitem{dorfler1996convergent}
W.~D{\"o}rfler.
\newblock A convergent adaptive algorithm for poisson’s equation.
\newblock {\em SIAM Journal on Numerical Analysis}, 33(3):1106--1124, 1996.

\bibitem{Du2019}
Q.~Du.
\newblock {\em Nonlocal Modeling, Analysis, and Computation}, volume~94.
\newblock SIAM, 2019.

\bibitem{Du2012a}
Q.~Du, M.~Gunzburger, R.~Lehoucq, and K.~Zhou.
\newblock Analysis and approximation of nonlocal diffusion problems with volume
  constraints.
\newblock {\em SIAM Review}, 56:676--696, 2012.

\bibitem{Du2013a}
Q.~Du, M.~Gunzburger, R.~Lehoucq, and K.~Zhou.
\newblock A nonlocal vector calculus, nonlocal volume-constrained problems, and
  nonlocal balance laws.
\newblock {\em Mathematical Models and Methods in Applied Sciences},
  23:493--540, 2013.

\bibitem{du2017nonlocal}
Q.~Du, Z.~Huang, and P.~G. LeFloch.
\newblock Nonlocal conservation laws. a new class of monotonicity-preserving
  models.
\newblock {\em SIAM Journal on Numerical Analysis}, 55(5):2465--2489, 2017.

\bibitem{du2014nonlocal}
Q.~Du, Z.~Huang, and R.~B. Lehoucq.
\newblock Nonlocal convection-diffusion volume-constrained problems and jump
  processes.
\newblock {\em Discrete \& Continuous Dynamical Systems-B}, 19(2):373, 2014.

\bibitem{Du2013b}
Q.~Du, L.~Ju, L.~Tian, and K.~Zhou.
\newblock A posteriori error analysis of finite element method for linear
  nonlocal diffusion and peridynamic models.
\newblock {\em Mathematics of Computation}, 82:1889--1922, 2013.

\bibitem{du2012new}
Q.~Du, J.~R. Kamm, R.~B. Lehoucq, and M.~L. Parks.
\newblock A new approach for a nonlocal, nonlinear conservation law.
\newblock {\em SIAM Journal on Applied Mathematics}, 72(1):464--487, 2012.

\bibitem{DuTao2016}
Q.~Du, Y.~Tao, X.~Tian, and J.~Yang.
\newblock Robust a posteriori stress analysis for quadrature collocation
  approximations of nonlocal models via nonlocal gradients.
\newblock {\em Computer Methods in Applied Mechanics and Engineering},
  310:605--627, 2016.

\bibitem{DuTi20}
Q.~Du and X.~Tian.
\newblock Mathematics of smoothed particle hydrodynamics: {A} study via
  nonlocal {S}tokes equations.
\newblock {\em Foundations of Computational Mathematics}, 20:801--826, 2020.

\bibitem{ern2013theory}
A.~Ern and J.-L. Guermond.
\newblock {\em Theory and practice of finite elements}, volume 159.
\newblock Springer Science \& Business Media, 2013.

\bibitem{guermond2011entropy}
J.-L. Guermond, R.~Pasquetti, and B.~Popov.
\newblock Entropy viscosity method for nonlinear conservation laws.
\newblock {\em Journal of Computational Physics}, 230(11):4248--4267, 2011.

\bibitem{hughes1989new}
T.~J. Hughes, L.~P. Franca, and G.~M. Hulbert.
\newblock A new finite element formulation for computational fluid dynamics:
  Viii. the galerkin/least-squares method for advective-diffusive equations.
\newblock {\em Computer methods in applied mechanics and engineering},
  73(2):173--189, 1989.

\bibitem{lai2015peridynamics}
X.~Lai, B.~Ren, H.~Fan, S.~Li, C.~Wu, R.~A. Regueiro, and L.~Liu.
\newblock Peridynamics simulations of geomaterial fragmentation by impulse
  loads.
\newblock {\em International Journal for Numerical and Analytical Methods in
  Geomechanics}, 39(12):1304--1330, 2015.

\bibitem{lee2019asymptotically}
H.~Lee and Q.~Du.
\newblock Asymptotically compatible sph-like particle discretizations of one
  dimensional linear advection models.
\newblock {\em SIAM Journal on Numerical Analysis}, 57(1):127--147, 2019.

\bibitem{lee2020nonlocal}
H.~Lee and Q.~Du.
\newblock Nonlocal gradient operators with a nonspherical interaction
  neighborhood and their applications.
\newblock {\em ESAIM: Mathematical Modelling and Numerical Analysis},
  54(1):105--128, 2020.

\bibitem{leng2021asymptotically}
Y.~Leng, X.~Tian, N.~Trask, and J.~T. Foster.
\newblock Asymptotically compatible reproducing kernel collocation and meshfree
  integration for nonlocal diffusion.
\newblock {\em SIAM Journal on Numerical Analysis}, 59(1):88--118, 2021.

\bibitem{leng2020asymptotically}
Y.~Leng, X.~Tian, N.~A. Trask, and J.~T. Foster.
\newblock Asymptotically compatible reproducing kernel collocation and meshfree
  integration for the peridynamic navier equation.
\newblock {\em Computer Methods in Applied Mechanics and Engineering},
  370:113264, 2020.

\bibitem{magin2006fractional}
R.~L. Magin.
\newblock {\em Fractional calculus in bioengineering}.
\newblock Begell House Publishers Inc., Redding, CT, 2006.

\bibitem{mainardi2010fractional}
F.~Mainardi.
\newblock {\em Fractional calculus and waves in linear viscoelasticity: an
  introduction to mathematical models}.
\newblock World Scientific, 2010.

\bibitem{Mengesha2013}
T.~Mengesha and Q.~Du.
\newblock Analysis of a scalar nonlocal peridynamic model with a sign changing
  kernel.
\newblock {\em Discrete \& Continuous Dynamical Systems-B}, 18(5):1415, 2013.

\bibitem{Mengesha2014}
T.~Mengesha and Q.~Du.
\newblock The bond-based peridynamic system with {D}irichlet-type volume
  constraint.
\newblock {\em Proceedings of the royal society of Edinburgh section A :
  mathematics}, 144:161--186, 2014.

\bibitem{MeDu14b}
T.~Mengesha and Q.~Du.
\newblock Nonlocal constrained value problems for a linear peridynamic {N}avier
  equation.
\newblock {\em Journal of Elasticity}, 116(1):27--51, 2014.

\bibitem{Mengesha2016}
T.~Mengesha and Q.~Du.
\newblock Characterization of function spaces of vector fields and an
  application in nonlinear peridynamics.
\newblock {\em Nonlinear Analysis}, 140:82--111, 2016.

\bibitem{oden2018applied}
J.~T. Oden and L.~Demkowicz.
\newblock {\em Applied functional analysis, third edition}.
\newblock CRC press, 2018.

\bibitem{Ouchi2017}
H.~Ouchi, A.~Katiyar, J.~T. Foster, and M.~M. Sharma.
\newblock A peridynamics model for the propagation of hydraulic fractures in
  naturally fractured reservoirs.
\newblock {\em Society of Petroleum Engineers Journal}, 22:1082--1102, 2017.

\bibitem{Pasetto2018}
M.~Pasetto, Y.~Leng, J.-S. Chen, J.~T. Foster, and P.~Seleson.
\newblock A reproducing kernel enhanced approach for peridynamic solutions.
\newblock {\em Computer Methods in Applied Mechanics and Engineering},
  340:1044--1078, 2018.

\bibitem{rosoton2016nonlocal}
X.~Ros-Oton.
\newblock Nonlocal elliptic equations in bounded domains: a survey.
\newblock {\em Publicacions matematiques}, pages 3--26, 2016.

\bibitem{seleson2016convergence}
P.~Seleson and D.~J. Littlewood.
\newblock Convergence studies in meshfree peridynamic simulations.
\newblock {\em Computers \& Mathematics with Applications}, 71(11):2432--2448,
  2016.

\bibitem{Silling2000}
S.~Silling.
\newblock Reformulation of elasticity theory for discontinuities and long-range
  forces.
\newblock {\em Journal of the Mechanics and Physics of Solids}, 48:175--209,
  2000.

\bibitem{silling2005meshfree}
S.~A. Silling and E.~Askari.
\newblock A meshfree method based on the peridynamic model of solid mechanics.
\newblock {\em Computers \& structures}, 83(17-18):1526--1535, 2005.

\bibitem{tian2015nonlocal}
H.~Tian, L.~Ju, and Q.~Du.
\newblock Nonlocal convection--diffusion problems and finite element
  approximations.
\newblock {\em Computer Methods in Applied Mechanics and Engineering},
  289:60--78, 2015.

\bibitem{tian2017conservative}
H.~Tian, L.~Ju, and Q.~Du.
\newblock A conservative nonlocal convection--diffusion model and
  asymptotically compatible finite difference discretization.
\newblock {\em Computer Methods in Applied Mechanics and Engineering},
  320:46--67, 2017.

\bibitem{Tian2013a}
X.~Tian and Q.~Du.
\newblock Analysis and comparison of different approximations to nonlocal
  diffusion and linear peridynamic equations.
\newblock {\em SIAM Journal on Numerical Analysis}, 51:3458--3482, 2013.

\bibitem{Tian2014a}
X.~Tian and Q.~Du.
\newblock Asymptotically compatible schemes and applications to robust
  discretization of nonlocal models.
\newblock {\em SIAM Journal on Numerical Analysis}, 52:1641--1665, 2014.

\bibitem{TiDu20}
X.~Tian and Q.~Du.
\newblock Asymptotically compatible schemes for robust discretization of
  parametrized problems with applications to nonlocal models.
\newblock {\em SIAM Review}, 62(1):199--227, 2020.

\bibitem{trask2019b}
N.~Trask, B.~Huntington, and D.~Littlewood.
\newblock Asymptotically compatible meshfree discretization of state-based
  peridynamics for linearly elastic composite materials.
\newblock {\em arXiv preprint arXiv:1903.00383}, 2019.

\bibitem{trask2019asymptotically}
N.~Trask, H.~You, Y.~Yu, and M.~L. Parks.
\newblock An asymptotically compatible meshfree quadrature rule for nonlocal
  problems with applications to peridynamics.
\newblock {\em Computer Methods in Applied Mechanics and Engineering},
  343:151--165, 2019.

\bibitem{zhang2016validation}
G.~Zhang, Q.~Le, A.~Loghin, A.~Subramaniyan, and F.~Bobaru.
\newblock Validation of a peridynamic model for fatigue cracking.
\newblock {\em Engineering Fracture Mechanics}, 162:76--94, 2016.

\bibitem{zitelli2011class}
J.~Zitelli, I.~Muga, L.~Demkowicz, J.~Gopalakrishnan, D.~Pardo, and V.~M. Calo.
\newblock A class of discontinuous petrov--galerkin methods. part iv: The
  optimal test norm and time-harmonic wave propagation in 1d.
\newblock {\em Journal of Computational Physics}, 230(7):2406--2432, 2011.

\end{thebibliography}

\end{document}